\documentclass[a4paper,12pt]{article}
\usepackage[utf8]{inputenc}

\usepackage{wrapfig}
\usepackage{amsmath} 
\usepackage{amsthm} 
\usepackage{amssymb} 
\usepackage{enumerate} 
\usepackage{esint} 
\usepackage{pgf,tikz} 
\usetikzlibrary{arrows} 
 \usepackage{yfonts} 
 \usepackage{mathrsfs} 
 \usepackage{mathabx} 
 \usepackage{graphicx}
\usepackage{caption}
\usepackage{subcaption}
\usepackage{mathtools} 
\usepackage[titletoc,toc]{appendix} 


\textwidth15cm
\textheight21cm
\evensidemargin.2cm
\oddsidemargin.2cm

\addtolength{\headheight}{5.2pt}

\definecolor{ffffff}{rgb}{1.0,1.0,1.0}
\definecolor{qqqqff}{rgb}{0.0,0.0,1.0}
\definecolor{ffqqqq}{rgb}{1.0,0.0,0.0}
\definecolor{zzzzqq}{rgb}{0.6,0.6,0.0}
\definecolor{marronet}{rgb}{0.6,0.2,0}
\definecolor{negre}{rgb}{0,0,0}
\definecolor{vermell}{rgb}{0.8,0.05,0.05}
\definecolor{blau}{rgb}{0.3,0.2,1.}
\definecolor{blauclar}{rgb}{0.,0.,1.}
\definecolor{grisfosc}{rgb}{0.5,0.5,0.5}
\definecolor{verd}{rgb}{0.05,0.7,0.05}
\definecolor{taronja}{rgb}{0.9,0.5,0.05}
\definecolor{vermellclar}{rgb}{1.,0.,0.}
\definecolor{verdet}{rgb}{0,0.8,0.1}
\definecolor{blauverd}{rgb}{0,0.4,0.2}
\definecolor{grisclar}{rgb}{0.6274509803921569,0.6274509803921569,0.6274509803921569}
\definecolor{cqcqcq}{rgb}{0.7529411764705882,0.7529411764705882,0.7529411764705882}
\definecolor{aqaqaq}{rgb}{0.6274509803921569,0.6274509803921569,0.6274509803921569}
\definecolor{xdxdff}{rgb}{0.49019607843137253,0.49019607843137253,1.}
\definecolor{uuuuuu}{rgb}{0.26666666666666666,0.26666666666666666,0.26666666666666666}


\newcommand*\squared[1]{\tikz[baseline=(char.base)]{
            \node[shape=rectangle,draw,inner sep=2.4pt] (char) {#1}; \node[shape=rectangle,draw,inner sep=1pt] (char) {#1};}}

\newcommand{\C}{{\mathbb C}}       
\newcommand{\R}{{\mathbb R}}       
\newcommand{\N}{{\mathbb N}}       
\newcommand{\Z}{{\mathbb Z}}       
\newcommand{\D}{{\mathbb D}}
\newcommand{\T}{{\mathbb T}}
\newcommand{\W}{{\mathcal W}} 
\newcommand{\diam}{{\rm diam}}
\newcommand{\dist}{{\rm dist}}
\newcommand{\Dist}{{\rm D}}

\newcommand{\rf}[1]{{(\ref{#1})}}
\newcommand{\supp}{{\rm supp}}

\newcommand{\Beurling}{{\mathbf {B}}}
\newcommand{\Cauchy}{{\mathbf {C}}}

%

\newcommand{\norm}[1]{{\left\| {#1} \right\|}}

%



%
\newtheorem{theorem}{Theorem}
\newtheorem*{theorem*}{Theorem}
\newtheorem*{conjecture*}{Conjecture}
\newtheorem{lemma}[theorem]{Lemma}
\newtheorem{claim}[theorem]{Claim}

\newtheorem{corollary}[theorem]{Corollary}
\newtheorem*{corollary*}{Corollary}
\newtheorem{proposition}[theorem]{Proposition}
\newtheorem{conjecture}[theorem]{Conjecture}
\newtheorem{definition}[theorem]{Definition}

\newtheorem{remark}[theorem]{Remark}

\numberwithin{subsection}{section}
\numberwithin{theorem}{section}
\numberwithin{equation}{section}
\numberwithin{figure}{section}

%

%

%
\usepackage[affil-it]{authblk}

\usepackage{xcolor}
\newcommand{\raja}{A}

\title{Global smoothness of quasiconformal mappings in the Triebel-Lizorkin scale}

\author{Kari Astala
\thanks{KA (Department of Mathematics and Statistics, University of Helsinki,  Finland): \texttt{kari.astala@helsinki.fi}} \,  Mart\'i Prats
\thanks{MP (De\-par\-ta\-ment de Ma\-te\-m\`a\-ti\-ques, U\-ni\-ver\-si\-tat Au\-t\`o\-no\-ma de Bar\-ce\-lo\-na, Bellaterra, Catalonia; Centre de Recerca Matem\`atica, Bellaterra, Catalonia): \texttt{marti.prats@uab.cat}}  \, Eero Saksman
\thanks{ES (Department of Mathematics and Statistics, University of Helsinki, Finland): \texttt{eero.saksman@helsinki.fi}}}

\begin{document}
\maketitle
\bibliographystyle{alpha}

\begin{abstract} 
We study quasiconformal mappings in planar domains $\Omega$ and their regularity properties described  in terms of Sobolev, Bessel potential or Triebel-Lizorkin scales.
This leads to optimal conditions, in terms of the geometry of the boundary  $\partial \Omega$ and of the smoothness of the Beltrami coefficient, that guarantee the global regularity of the mappings in these classes. In the Triebel-Lizorkin class with smoothness below $1$, the same conditions give global regularity in $\Omega$  for the principal solutions with Beltrami coefficient supported in $\Omega$.
\end{abstract}

\renewcommand{\abstractname}{Resum\'e}
\begin{abstract}
Nous \'etudions les applications quasiconformes dans les domaines planaires $\Omega$ et leurs propri\'et\'es de r\'egularit\'e d\'ecrites en termes d'\'echelles de Sobolev, de potentiel de Bessel ou de Triebel-Lizorkin. Cela conduit \`a des conditions optimales, en termes de g\'eom\'etrie de la fronti\`ere $\partial \Omega$ et de finesse du coefficient de Beltrami, qui garantissent la r\'egularit\'e globale des applications dans ces classes. Dans la classe de Triebel-Lizorkin avec une r\'egularit\'e inf\'erieure \`a $1$, les m\^emes conditions donnent une r\'egularit\'e globale en $\Omega$ pour les solutions principales avec coefficient de Beltrami support\'e en $\Omega$.
\end{abstract}

\section{Introduction} Quasiconfomal mappings in planar domains $\Omega \subset \C$ are homeomorphisms  that satisfy the Beltrami equation
\begin{equation}\label{eqBeltrami}
\bar{\partial}f=\mu \, \partial f \mbox{\quad\; almost everywhere, \, with  \,} \| \mu \|_{\infty} \leq k < 1.
\end{equation}
Here, a priori,  $f \in W^{1,2}_{loc}(\Omega)$ and we often call the mapping $\mu$-quasiconformal when \eqref{eqBeltrami} holds.
 In many respects the local smoothness of the {\it Beltrami coefficient }$\mu = \mu_f$ locally dictates  the regularity of $f$. For instance, from \eqref{eqBeltrami} alone one has $f \in C^\alpha_{loc}(\Omega)$, where $\alpha = \frac{1-\| \mu \|_{\infty}}{1+\| \mu \|_{\infty}}$. On the other hand, it follows from the classical 
Schauder estimates, see e.g. \cite[Chapter 15]{AstalaIwaniecMartin},  that $f \in  C^{\ell +1,\alpha}_{loc}(\Omega)$ whenever $ \mu \in C^{\ell, \alpha}_{loc}(\Omega)$ and $\ell \in \N, 0 < \alpha < 1$. Similar relations hold  \cite{CruzMateuOrobitg} for the Sobolev regularity, or regularity measured in terms of the Besov or Triebel-Lizorkin spaces.

In this setting it is  natural   to ask in which domains and for which function spaces the local smoothness extends to a  {\it global regularity}, regularity in all of $\Omega$. The question has been studied, for instance, for the global higher integrability of the derivative of the mapping in \cite{AstalaKoskela}, \cite{Nieminen}, for the global H\"older continuity of $f$ in \cite{GehringMartio}, or for global $C^{\ell,\alpha}$-regularity in \cite{Kalaj}.
\smallskip

In this present paper we look for  optimal conditions for the global regularity in the more subtle smoothness scales described in terms of Besov, Bessel potential, Triebel-Lizorkin  or Sobolev spaces. There are actually two different ways to approach this question. Namely, given two domains $\Omega, \Omega'\subset \C$ one can study the   global regularity of quasiconfomal homeomorphisms $f: \Omega \to \Omega'$.
Another, and as it turns out, more difficult question is 
the   regularity of  the {\it principal mappings}. These are homeomorphic solutions to \eqref{eqBeltrami} in all of $\C$, where for a bounded domain $\Omega \subset \C$ we have
$$ f(z) = z + {\mathcal O}(1/z), \quad z \notin \Omega, \quad {\rm with} \quad \supp \, \mu \subset \overline{\Omega},
$$
so that $f$ is conformal in $\C \setminus \Omega$. Now the question is how the geometry of $\Omega$ and the smoothness of $\mu$ reflect on the global regularity  of $f{|_{\Omega}}$.
\smallskip

Let us begin with the first question, when a quasiconformal mapping $f$ between two domains lies in the  Sobolev\footnote{{We emphasize that for us $W^{s,p}$ means the Bessel potential space $W^{s,p}=(1-\Delta)^{-s/2}L^p$, while some authors (most notably Triebel) use this symbol for the diagonal Besov space  $B^{s}_{p,p}$ for non-integer values of $s.$}} (i.e. Bessel-potential) space $W^{s,p}(\Omega)$, {which includes {the case of} Hilbert spaces $H^s(\Omega)=W^{s,2}(\Omega)$.} 
We 
say that $\Omega$ is a $B^{s+1-\frac1p}_{p,p}${\it-domain} if the boundary $\partial \Omega$ admits a bi-Lipschitz parameterization contained in the Besov space
$B^{s+1-\frac1p}_{p,p}$,  c.f. Definition \ref{defBesovDomain}.
\smallskip

\begin{theorem}\label{theoSobolevStabilityOfDomains}
Let  $s > 0 $ and $1 < p < \infty$ with $sp>2$. Suppose $\Omega, \Omega'$ are simply connected, bounded $B^{s+1-\frac1p}_{p,p}$-domains and  $f:\Omega \to\Omega'$ is a quasiconformal mapping, with $\mu_f\in W^{s,p}(\Omega)$.  

Then $f\in W^{s+1,p}(\Omega)$.  
\end{theorem}

 The result holds for finitely connected domains as well, see Section \ref{secSobolev}. 
 Our assumptions on the boundary $\partial \Omega$ are, in fact,  optimal 
 for the global  $W^{s+1,p}$-regularity, as shown by the following. 
\smallskip
 
\begin{theorem}\label{theoSobolevRiemann}
Let  $\, s > 0 $ and $1 < p < \infty$ with $sp>2$, and suppose $\Omega$ is a bounded simply connected domain with Riemann map $\varphi :\D \to \Omega$. 

Then $\Omega$ is a  $B^{s+1-\frac1p}_{p,p}$-domain if and only if  $\varphi \in W^{s+1,p}(\D)$ and $\varphi^{-1} \in W^{s+1,p}(\Omega)$.
\end{theorem}
 \noindent For further aspects  see, in particular, Lemma \ref{intrinsic} and the discussion preceding it.  
Kellogg's classical result on H\"older regularity {states that every simply connected domain with a  bi-Lipschitz parameterization in the H\"older class $C^{s+1}(\T)$ has its Riemann mapping in $C^{s+1}(\D)$ whenever $s\in \R_+\setminus\Z$ (see  \cite[Theorem 3.6]{PommerenkeConformal}). It }can be interpreted as a particular case of a Besov scale version of Theorem  \ref{theoSobolevRiemann} for $p=\infty$. In particular,  Theorem  \ref{theoSobolevRiemann} characterizes the regularity of the mapping in terms of smoothness of the boundary. Other  problems related to Theorem  \ref{theoSobolevRiemann} on spaces of analytic (or univalent) functions  with less global smoothness on the unit disc   have been considered in the literature, see e.g. \cite{Walsh, PerezGonzalezRattya}.  For the many fascinating relations of the boundary smoothness $W^{3/2,2}$ see \cite{Bishop} and its references.
 
\medskip

In order to  sketch  the proof of Theorem \ref{theoSobolevStabilityOfDomains} 
we first note that  $\Omega$ is a Lipschitz-domain, so that one can extend $\mu_f$ to a Beltrami coefficient $\tilde \mu \in W^{s,p}(B_R)$, where the disc 
$B_R \supset \overline{\Omega}$.  
Solving \eqref{eqBeltrami}  in $B_R$ with $\mu = \tilde \mu$ gives us a quasiconformal map $F \in W^{s+1,p}_{loc}(B_R)$ \cite{CruzMateuOrobitg}. In particular, $F|_\Omega \in  W^{s+1,p}(\Omega)$, while Stoilow's theorem shows that $f = h \circ F$, where $h: F(\Omega) \to \Omega'$ is conformal  (see Figure \ref{figStoilow}). 
 In fact, a similar basic strategy was  applied in the H\"older scale in \cite{Kalaj}, who proved the H\"older    space analogue of Theorem \ref{theoSobolevStabilityOfDomains}.
 
 The argument in \cite{Kalaj} relies on Kellogg's regularity result, while in our setting  in order to prove Theorem \ref {theoSobolevRiemann} and the more general Theorem \ref{theoTriebelStabilityOfDomains} we need to establish  sharp analogues  both for Sobolev spaces (Theorem \ref{theoSobolevRiemann})  and in the setting of Triebel spaces (see \ref{theoTriebelRiemann} below). These results  are of independent interest.  In proving them
 we  make use of  the approach applied  by Pommerenke  in \cite{PommerenkeConformal} for the H\"older scale. However, to carry through this argument in the case of general Sobolev and Triebel-Lizorkin spaces is far from trivial, and we refer to {Section \ref{secRiemann} below for details. }

\begin{figure}[ht]
 \centering
\begin{tikzpicture}[line cap=round,line join=round,>=triangle 45,x=0.45cm,y=0.45cm]
\clip(-9.942839905217182,-9.858320421771625) rectangle (16.057380359564185,6.663805363507869);
\draw[line width=0.8pt] (-1.0320568603213844,1.704081582200247) -- (-1.023303249771372,1.6013813457699932) -- (-1.027455608202946,1.5158731245447317) -- (-1.0407886496899157,1.4430201502603222) -- (-1.0598268253027163,1.3790217290948383) -- (-1.0813591541499958,1.3207308144199295) -- (-1.1024500510763406,1.2655783994130652) -- (-1.1204464914331136,1.2115043407771255) -- (-1.1329818400442009,1.1568942361278387) -- (-1.1379766581933195,1.100521988923617) -- (-1.1336367891643497,1.0414977061263728) -- (-1.1184490095710256,0.979220585095931) -- (-1.0911745204171222,0.9133364575347116) -- (-1.050840538533138,0.8436996696133721) -- (-0.9967302357402931,0.7703389887221563) -- (-0.9283712597975099,0.693427238606728) -- (-0.845523057891877,0.6132543759613054) -- (-0.7481632101379336,0.5302037328659585) -- (-0.6364729672559455,0.44473116076895874) -- (-0.5108221733041891,0.35734682302912696) -- (-0.37175374104508796,0.2685993943471454) -- (-0.21996783422988864,0.1790624367288524) -- (-0.05630589779139872,0.08932273293757283) -- (0.029680935879016344,0.044561362747361104) -- (0.1182653363608567,-2.9630293423624495E-5) -- (0.3026697535438605,-0.08840962570320574) -- (0.4957375693986799,-0.17524428070103282) -- (0.6962207640735243,-0.2599784781065685) -- (0.9028081629785755,-0.3420797249052037) -- (1.1141401028940483,-0.4210420401772043) -- (1.328822635508101,-0.4963891020453533) -- (1.5454412337563759,-0.5676767821717227) -- (1.7625739796301152,-0.634495185020169) -- (1.9788042254149667,-0.6964702977871071) -- (2.192732733617739,-0.7532653455890801) -- (2.4029893141335523,-0.8045819351816009) -- (2.6082439905009758,-0.8501610591697027) -- (2.8072177403879133,-0.8897840213566001) -- (2.9986928687461636,-0.9232733325628161) -- (3.1815230853677363,-0.9504936149340993) -- (3.3546433718711786,-0.9713525414424113) -- (3.5170797364413238,-0.9858018259702244) -- (3.6679589679410354,-0.9938382680543356) -- (3.8065185143086913,-0.99550484505136) -- (3.9321166234502964,-0.9908918331730265) -- (4.044242898130315,-0.980137927525364) -- (4.142529429660415,-0.9634313199718216) -- (4.226762688480551,-0.9410106823263348) -- (4.296896363021909,-0.9131659910682989) -- (4.353065351536458,-0.8802391184573861) -- (4.3956011248729645,-0.8426241036120903) -- (4.425048691474533,-0.8007670058018531) -- (4.442185409167861,-0.7551652308885842) -- (4.448041901609591,-0.7063662105393426) -- (4.443925350550291,-0.6549653025179198) -- (4.431445448371748,-0.6016027690490136) -- (4.412543308648446,-0.546959678934648) -- (4.389523645779245,-0.4917525677884594) -- (4.365090548030439,-0.4367266794394191) -- (4.342387181626558,-0.38264760024253647) -- (4.325039776820402,-0.3302910867200358) -- (4.317206260168998,-0.28043087564246993) -- (4.323629910537315,-0.23382425434518844) -- (4.349698429646727,-0.19119515776154308) -- (4.401508831280398,-0.1532145473401708) -- (4.4859385665529015,-0.12047781569965862) -- (4.401508831280398,-0.1532145473401708) -- (4.349698429646727,-0.19119515776154308) -- (4.323629910537315,-0.23382425434518844) -- (4.317206260168998,-0.28043087564246993) -- (4.325039776820402,-0.3302910867200358) -- (4.342387181626558,-0.38264760024253647) -- (4.365090548030439,-0.4367266794394191) -- (4.389523645779245,-0.4917525677884594) -- (4.412543308648446,-0.546959678934648) -- (4.431445448371748,-0.6016027690490136) -- (4.443925350550291,-0.6549653025179198) -- (4.448041901609591,-0.7063662105393426) -- (4.442185409167861,-0.7551652308885842) -- (4.425048691474533,-0.8007670058018531) -- (4.3956011248729645,-0.8426241036120903) -- (4.353065351536458,-0.8802391184573861) -- (4.296896363021909,-0.9131659910682989) -- (4.226762688480551,-0.9410106823263348) -- (4.142529429660415,-0.9634313199718216) -- (4.044242898130315,-0.980137927525364) -- (3.9321166234502964,-0.9908918331730265) -- (3.8065185143086913,-0.99550484505136) -- (3.6679589679410354,-0.9938382680543356) -- (3.5170797364413238,-0.9858018259702244) -- (3.3546433718711786,-0.9713525414424113) -- (3.1815230853677363,-0.9504936149340993) -- (2.9986928687461636,-0.9232733325628161) -- (2.8072177403879133,-0.8897840213566001) -- (2.6082439905009758,-0.8501610591697027) -- (2.4029893141335523,-0.8045819351816009) -- (2.192732733617739,-0.7532653455890801) -- (1.9788042254149667,-0.6964702977871071) -- (1.7625739796301152,-0.634495185020169) -- (1.5454412337563759,-0.5676767821717227) -- (1.328822635508101,-0.4963891020453533) -- (1.1141401028940483,-0.4210420401772043) -- (0.9028081629785755,-0.3420797249052037) -- (0.6962207640735243,-0.2599784781065685) -- (0.4957375693986799,-0.17524428070103282) -- (0.3026697535438605,-0.08840962570320574) -- (0.1182653363608567,-2.9630293423624495E-5) -- (0.029680935879016344,0.044561362747361104) -- (-0.05630589779139872,0.08932273293757283) -- (-0.21996783422988864,0.1790624367288524) -- (-0.37175374104508796,0.2685993943471454) -- (-0.5108221733041891,0.35734682302912696) -- (-0.6364729672559455,0.44473116076895874) -- (-0.7481632101379336,0.5302037328659585) -- (-0.845523057891877,0.6132543759613054) -- (-0.9283712597975099,0.693427238606728) -- (-0.9967302357402931,0.7703389887221563) -- (-1.050840538533138,0.8436996696133721) -- (-1.0911745204171222,0.9133364575347116) -- (-1.1184490095710256,0.979220585095931) -- (-1.1336367891643497,1.0414977061263728) -- (-1.1379766581933195,1.100521988923617) -- (-1.1329818400442009,1.1568942361278387) -- (-1.1204464914331136,1.2115043407771255) -- (-1.1024500510763406,1.2655783994130652) -- (-1.0813591541499958,1.3207308144199295) -- (-1.0598268253027163,1.3790217290948383) -- (-1.0407886496899157,1.4430201502603222) -- (-1.027455608202946,1.5158731245447317) -- (-1.023303249771372,1.6013813457699932) -- (-1.0320568603213844,1.704081582200247);
\draw[line width=0.8pt] (4.4859385665529015,-0.12047781569965862) -- (4.593462739004595,-0.08846602306949634) -- (4.6961673665382655,-0.05591439851548427) -- (4.794422960110018,-0.022331575879169435) -- (4.888558732578229,0.012718563255889378) -- (4.97886498729503,0.049618515919512406) -- (5.065595455216655,0.08869882834504834) -- (5.148969580532645,0.13023981079428254) -- (5.229174754813906,0.17447329661710423) -- (5.306368499679614,0.22158444554593204) -- (5.380680597982991,0.2717135912248987) -- (5.452215173515918,0.3249581329737943) -- (5.521052719232416,0.38137447178676853) -- (5.587252073990978,0.4409799905657911) -- (5.650852347815749,0.5037550785888719) -- (5.711874795676574,0.5696452002130389) -- (5.770324639787892,0.6385630078120755) -- (5.826192840426485,0.7103904989490165) -- (5.879457815268084,0.784981217783403) -- (5.93008710724283,0.8621625007132947) -- (5.978039000909595,0.9417377662520439) -- (6.0232640873491405,1.0234888491398244) -- (6.065706777576157,1.1071783786899232) -- (6.105306764470129,1.1925522013697882) -- (6.142000433225088,1.279341847616834) -- (6.175722220318182,1.3672670428890112) -- (6.206405920997139,1.4560382629501278) -- (6.233985945286558,1.5453593333899343) -- (6.25839852251306,1.6349300733789671) -- (6.279582854349309,1.7244489836581476) -- (6.297482216376866,1.8136159787631427) -- (6.312045008167915,1.9021351634834838) -- (6.323225751885828,1.9897176535564445) -- (6.330986039404613,2.076084440595675) -- (6.335295427947173,2.1609693012545996) -- (6.336132284242469,2.244121750624568) -- (6.333484577201496,2.3253100398677695) -- (6.327350619112141,2.4043241980849026) -- (6.317739755352888,2.480979118417606) -- (6.3046730026253694,2.5551176883856477) -- (6.288183635705781,2.626613964458872) -- (6.268317722715162,2.695376390863905) -- (6.2451346089085,2.7613510626256206) -- (6.218707348982722,2.8245250328433635) -- (6.189123087903526,2.884929664201935) -- (6.156483390251062,2.9426440247173264) -- (6.120904518084484,2.99779832771723) -- (6.082517657325344,3.0505774160562886) -- (6.04146909265984,3.1012242905661185) -- (5.997920330959934,3.1500436827400833) -- (5.952048173223305,3.1974056716528296) -- (5.904044735032167,3.243749345114582) -- (5.854117415530954,3.2895865050601953) -- (5.802488814922827,3.3355054171729654) -- (5.749396600485074,3.3821746047431986) -- (5.695093321103335,3.4303466867615438) -- (5.639846170324701,3.4808622602470765) -- (5.5839366979296585,3.534653826810148) -- (5.527660470022889,3.592749763449989) -- (5.471326677642933,3.656278337587074) -- (5.4152576938906885,3.7264717663302473) -- (5.359788579576792,3.8046703199785967) -- (5.305266537387828,3.8923264697581033) -- (5.252050314571415,3.9910090797930335) -- (5.200509554140128,4.102407643312101) -- (5.252050314571415,3.9910090797930335) -- (5.305266537387828,3.8923264697581033) -- (5.359788579576792,3.8046703199785967) -- (5.4152576938906885,3.7264717663302473) -- (5.471326677642933,3.656278337587074) -- (5.527660470022889,3.592749763449989) -- (5.5839366979296585,3.534653826810148) -- (5.639846170324701,3.4808622602470765) -- (5.695093321103335,3.4303466867615438) -- (5.749396600485074,3.3821746047431986) -- (5.802488814922827,3.3355054171729654) -- (5.854117415530954,3.2895865050601953) -- (5.904044735032167,3.243749345114582) -- (5.952048173223305,3.1974056716528296) -- (5.997920330959934,3.1500436827400833) -- (6.04146909265984,3.1012242905661185) -- (6.082517657325344,3.0505774160562886) -- (6.120904518084484,2.99779832771723) -- (6.156483390251062,2.9426440247173264) -- (6.189123087903526,2.884929664201935) -- (6.218707348982722,2.8245250328433635) -- (6.2451346089085,2.7613510626256206) -- (6.268317722715162,2.695376390863905) -- (6.288183635705781,2.626613964458872) -- (6.3046730026253694,2.5551176883856477) -- (6.317739755352888,2.480979118417606) -- (6.327350619112141,2.4043241980849026) -- (6.333484577201496,2.3253100398677695) -- (6.336132284242469,2.244121750624568) -- (6.335295427947173,2.1609693012545996) -- (6.330986039404613,2.076084440595675) -- (6.323225751885828,1.9897176535564445) -- (6.312045008167915,1.9021351634834838) -- (6.297482216376866,1.8136159787631427) -- (6.279582854349309,1.7244489836581476) -- (6.25839852251306,1.6349300733789671) -- (6.233985945286558,1.5453593333899343) -- (6.206405920997139,1.4560382629501278) -- (6.175722220318182,1.3672670428890112) -- (6.142000433225088,1.279341847616834) -- (6.105306764470129,1.1925522013697882) -- (6.065706777576157,1.1071783786899232) -- (6.0232640873491405,1.0234888491398244) -- (5.978039000909595,0.9417377662520439) -- (5.93008710724283,0.8621625007132947) -- (5.879457815268084,0.784981217783403) -- (5.826192840426485,0.7103904989490165) -- (5.770324639787892,0.6385630078120755) -- (5.711874795676574,0.5696452002130389) -- (5.650852347815749,0.5037550785888719) -- (5.587252073990978,0.4409799905657911) -- (5.521052719232416,0.38137447178676853) -- (5.452215173515918,0.3249581329737943) -- (5.380680597982991,0.2717135912248987) -- (5.306368499679614,0.22158444554593204) -- (5.229174754813906,0.17447329661710423) -- (5.148969580532645,0.13023981079428254) -- (5.065595455216655,0.08869882834504834) -- (4.97886498729503,0.049618515919512406) -- (4.888558732578229,0.012718563255889378) -- (4.794422960110018,-0.022331575879169435) -- (4.6961673665382655,-0.05591439851548427) -- (4.593462739004595,-0.08846602306949634) -- (4.4859385665529015,-0.12047781569965862);
\draw[line width=0.8pt] (5.200509554140128,4.102407643312101) -- (5.161926928793554,4.198738829806718) -- (5.131297153541593,4.286465342307487) -- (5.106886207583741,4.3666729708636876) -- (5.087099963119688,4.440337270623559) -- (5.070480424970541,4.508329797571373) -- (5.05570189848486,4.571424199911474) -- (5.041567085729467,4.630302165099243) -- (5.027003109965041,4.685559222519019) -- (5.011057468406505,4.737710401808951) -- (4.9928939132681815,4.7871957468328175) -- (4.9717882610937485,4.834385685298755) -- (4.947124130370969,4.879586254024961) -- (4.918388607431208,4.923044179852322) -- (4.8851678406337395,4.964951816203998) -- (4.847142562834826,5.005451935291935) -- (4.804083542141592,5.044642375970342) -- (4.755846960950681,5.0825805472360885) -- (4.702369723271686,5.1192877873760665) -- (4.643664690335387,5.15475357876148) -- (4.5798158444867445,5.188939618289091) -- (4.5109733813626995,5.221783743469397) -- (4.437348730354744,5.253203714161766) -- (4.359209503356286,5.283100849956502) -- (4.276874371794796,5.311363523203856) -- (4.190707871948732,5.3378705076899955) -- (4.101115138549249,5.3624941829598995) -- (4.008536566666704,5.385103594287201) -- (3.9134424018819387,5.405567368290986) -- (3.816327258742337,5.423756484199517) -- (3.7177045675026834,5.4395469007609165) -- (3.6181009491507927,5.452822038800781) -- (3.518050518717933,5.463475119426751) -- (3.4180891168740266,5.471411357880018) -- (3.3187484698076397,5.476550013033767) -- (3.220550277390751,5.478826292538589) -- (3.1240002296283116,5.4781931136147985) -- (3.0295819513925815,5.474622719491734) -- (2.937750875442258,5.468108151493973) -- (2.8489280437263855,5.458664576774506) -- (2.7634938369730433,5.446330471694849) -- (2.6817816325628296,5.431168660852099) -- (2.6040713906871185,5.413267211752937) -- (2.530583168791111,5.392740185134572) -- (2.461470564301659,5.369728240932628) -- (2.3968140856398894,5.344399099895974) -- (2.3366144515185945,5.316947860848503) -- (2.2807858185244196,5.287597173597846) -- (2.2291489369848296,5.256597267491043) -- (2.181424235119863,5.224225835617139) -- (2.1372248314786644,5.190787774656739) -- (2.0960494756608097,5.156614780378504) -- (2.057275417322407,5.122064798782586) -- (2.020151203466988,5.0875213328910025) -- (1.983789404021181,5.053392605184975) -- (1.9471592656951677,5.020110575689182) -- (1.9090792941279275,4.988129815702985) -- (1.868209764317261,4.957926237178571) -- (1.8230451593346033,4.929995677746061) -- (1.7719065373246161,4.90485234138555) -- (1.7129338267895702,4.883027094746092) -- (1.6440780501585062,4.865065619111631) -- (1.563093475641184,4.8515264180138775) -- (1.4675296973668142,4.842978680492123) -- (1.3547236438075743,4.84) -- (1.4675296973668142,4.842978680492123) -- (1.563093475641184,4.8515264180138775) -- (1.6440780501585062,4.865065619111631) -- (1.7129338267895702,4.883027094746092) -- (1.7719065373246161,4.90485234138555) -- (1.8230451593346033,4.929995677746061) -- (1.868209764317261,4.957926237178571) -- (1.9090792941279275,4.988129815702985) -- (1.9471592656951677,5.020110575689182) -- (1.983789404021181,5.053392605184975) -- (2.020151203466988,5.0875213328910025) -- (2.057275417322407,5.122064798782586) -- (2.0960494756608097,5.156614780378504) -- (2.1372248314786644,5.190787774656739) -- (2.181424235119863,5.224225835617139) -- (2.2291489369848296,5.256597267491043) -- (2.2807858185244196,5.287597173597846) -- (2.3366144515185945,5.316947860848503) -- (2.3968140856398894,5.344399099895974) -- (2.461470564301659,5.369728240932628) -- (2.530583168791111,5.392740185134572) -- (2.6040713906871185,5.413267211752937) -- (2.6817816325628296,5.431168660852099) -- (2.7634938369730433,5.446330471694849) -- (2.8489280437263855,5.458664576774506) -- (2.937750875442258,5.468108151493973) -- (3.0295819513925815,5.474622719491734) -- (3.1240002296283116,5.4781931136147985) -- (3.220550277390751,5.478826292538589) -- (3.3187484698076397,5.476550013033767) -- (3.4180891168740266,5.471411357880018) -- (3.518050518717933,5.463475119426751) -- (3.6181009491507927,5.452822038800781) -- (3.7177045675026834,5.4395469007609165) -- (3.816327258742337,5.423756484199517) -- (3.9134424018819387,5.405567368290986) -- (4.008536566666704,5.385103594287201) -- (4.101115138549249,5.3624941829598995) -- (4.190707871948732,5.3378705076899955) -- (4.276874371794796,5.311363523203856) -- (4.359209503356286,5.283100849956502) -- (4.437348730354744,5.253203714161766) -- (4.5109733813626995,5.221783743469397) -- (4.5798158444867445,5.188939618289091) -- (4.643664690335387,5.15475357876148) -- (4.702369723271686,5.1192877873760665) -- (4.755846960950681,5.0825805472360885) -- (4.804083542141592,5.044642375970342) -- (4.847142562834826,5.005451935291935) -- (4.8851678406337395,4.964951816203998) -- (4.918388607431208,4.923044179852322) -- (4.947124130370969,4.879586254024961) -- (4.9717882610937485,4.834385685298755) -- (4.9928939132681815,4.7871957468328175) -- (5.011057468406505,4.737710401808951) -- (5.027003109965041,4.685559222519019) -- (5.041567085729467,4.630302165099243) -- (5.05570189848486,4.571424199911474) -- (5.070480424970541,4.508329797571373) -- (5.087099963119688,4.440337270623559) -- (5.106886207583741,4.3666729708636876) -- (5.131297153541593,4.286465342307487) -- (5.161926928793554,4.198738829806718) -- (5.200509554140128,4.102407643312101);
\draw[line width=0.8pt] (1.3547236438075743,4.84) -- (1.179204198069859,4.844066117837243) -- (1.033088495306832,4.855342404250523) -- (0.9116832072515378,4.872532120327284) -- (0.8108111331113721,4.894447360575503) -- (0.7267723099800341,4.9200015383993865) -- (0.65630705312273,4.9482023150539725) -- (0.596560876231622,4.978144957575446) -- (0.545051241748505,5.00900611118392) -- (0.4996360913516966,5.040037971655507) -- (0.45848410670413214,5.070562843160401) -- (0.42004665055964563,5.099968067063826) -- (0.38303133832442754,5.127701307186559) -- (0.34637719017064383,5.15326617702187) -- (0.30923131379920366,5.176218194405619) -- (0.2709270679486618,5.196161049136308) -- (0.23096365674724384,5.2127431690418815) -- (0.18898710500497923,5.225654569990029) -- (0.14477256454292914,5.2346239753387955) -- (0.09820790165649694,5.239416190324272) -- (0.049278515809806744,5.239829716882144) -- (-0.0019466603418632734,5.235694594399889) -- (-0.05532802950460513,5.226870451896401) -- (-0.11066711677637764,5.213244757125812) -- (-0.16771732505848705,5.194731248102317) -- (-0.2261937087510668,5.171268532542761) -- (-0.2857818156355687,5.142818840723778) -- (-0.3461456468472785,5.109366917250267) -- (-0.40693478484087053,5.070919037231988) -- (-0.46779073925201403,5.0275021323650435) -- (-0.5283525605580435,4.9791630124150466) -- (-0.5882617714407082,4.925967667598756) -- (-0.6471666657540142,4.868000637360939) -- (-0.7047260250001689,4.805364431043271) -- (-0.760612302216647,4.7381789859420325) -- (-0.8145143231773863,4.666581148251399) -- (-0.8661395548111285,4.590724162389095) -- (-0.9152159907399201,4.510777154201205) -- (-0.9614937038407835,4.426924593542911) -- (-1.0047461157335729,4.339365721731948) -- (-1.0447710330980307,4.248313929371545) -- (-1.0813915007230541,4.153996070039665) -- (-1.1144565211911879,4.056651695341279) -- (-1.1438416911013531,3.9565321968205005) -- (-1.1694498037328327,3.8538998402293276) -- (-1.1912114680535164,3.749026677649797) -- (-1.2090857939754285,3.6421933229663237) -- (-1.2230611937605465,3.533687576185006) -- (-1.2331563494799231,3.423802882096676) -- (-1.2394213964291296,3.312836608780489) -- (-1.2419393724030297,3.2010881314448176) -- (-1.2408279827328974,3.088856707102243) -- (-1.2362417309888973,2.97643912557542) -- (-1.228374465250932,2.8641271223306037) -- (-1.2174623898508796,2.752204538635609) -- (-1.2037875924892254,2.6409442145389934) -- (-1.1876821366291095,2.530604600167245) -- (-1.169532769070797,2.4214260708367474) -- (-1.1497862926095888,2.3136269314773172) -- (-1.1289556536801828,2.2073990958640834) -- (-1.107626794890502,2.102903426154499) -- (-1.0864663223480002,2.000264718227262) -- (-1.0662300376814602,1.8995663183199278) -- (-1.0477723846612979,1.8008443564619943) -- (-1.0320568603213844,1.704081582200247) -- (-1.0477723846612979,1.8008443564619943) -- (-1.0662300376814602,1.8995663183199278) -- (-1.0864663223480002,2.000264718227262) -- (-1.107626794890502,2.102903426154499) -- (-1.1289556536801828,2.2073990958640834) -- (-1.1497862926095888,2.3136269314773172) -- (-1.169532769070797,2.4214260708367474) -- (-1.1876821366291095,2.530604600167245) -- (-1.2037875924892254,2.6409442145389934) -- (-1.2174623898508796,2.752204538635609) -- (-1.228374465250932,2.8641271223306037) -- (-1.2362417309888973,2.97643912557542) -- (-1.2408279827328974,3.088856707102243) -- (-1.2419393724030297,3.2010881314448176) -- (-1.2394213964291296,3.312836608780489) -- (-1.2331563494799231,3.423802882096676) -- (-1.2230611937605465,3.533687576185006) -- (-1.2090857939754285,3.6421933229663237) -- (-1.1912114680535164,3.749026677649797) -- (-1.1694498037328327,3.8538998402293276) -- (-1.1438416911013531,3.9565321968205005) -- (-1.1144565211911879,4.056651695341279) -- (-1.0813915007230541,4.153996070039665) -- (-1.0447710330980307,4.248313929371545) -- (-1.0047461157335729,4.339365721731948) -- (-0.9614937038407835,4.426924593542911) -- (-0.9152159907399201,4.510777154201205) -- (-0.8661395548111285,4.590724162389095) -- (-0.8145143231773863,4.666581148251399) -- (-0.760612302216647,4.7381789859420325) -- (-0.7047260250001689,4.805364431043271) -- (-0.6471666657540142,4.868000637360939) -- (-0.5882617714407082,4.925967667598756) -- (-0.5283525605580435,4.9791630124150466) -- (-0.46779073925201403,5.0275021323650435) -- (-0.40693478484087053,5.070919037231988) -- (-0.3461456468472785,5.109366917250267) -- (-0.2857818156355687,5.142818840723778) -- (-0.2261937087510668,5.171268532542761) -- (-0.16771732505848705,5.194731248102317) -- (-0.11066711677637764,5.213244757125812) -- (-0.05532802950460513,5.226870451896401) -- (-0.0019466603418632734,5.235694594399889) -- (0.049278515809806744,5.239829716882144) -- (0.09820790165649694,5.239416190324272) -- (0.14477256454292914,5.2346239753387955) -- (0.18898710500497923,5.225654569990029) -- (0.23096365674724384,5.2127431690418815) -- (0.2709270679486618,5.196161049136308) -- (0.30923131379920366,5.176218194405619) -- (0.34637719017064383,5.15326617702187) -- (0.38303133832442754,5.127701307186559) -- (0.42004665055964563,5.099968067063826) -- (0.45848410670413214,5.070562843160401) -- (0.4996360913516966,5.040037971655507) -- (0.545051241748505,5.00900611118392) -- (0.596560876231622,4.978144957575446) -- (0.65630705312273,4.9482023150539725) -- (0.7267723099800341,4.9200015383993865) -- (0.8108111331113721,4.894447360575503) -- (0.9116832072515378,4.872532120327284) -- (1.033088495306832,4.855342404250523) -- (1.179204198069859,4.844066117837243) -- (1.3547236438075743,4.84);
\draw[line width=0.8pt] (-0.7132163358567141,-3.626039421814307) -- (-0.7309290596531317,-3.75843231604029) -- (-0.7519671154358886,-3.888372196137955) -- (-0.7757851112263201,-4.016140266650912) -- (-0.8018504700423497,-4.141984903099788) -- (-0.8296444779736074,-4.266123028072959) -- (-0.8586632685081851,-4.388741478054184) -- (-0.8884187431110241,-4.509998360987172) -- (-0.9184394280539417,-4.630024404577057) -- (-0.9482712674972892,-4.748924295328799) -- (-0.977478352823248,-4.866778008322502) -- (-1.0056435882207595,-4.983642127725649) -- (-1.0323692925220904,-5.099551158042258) -- (-1.0572777372910338,-5.214518826098953) -- (-1.080011621162745,-5.32853937376796) -- (-1.1002344804352122,-5.441588841427019) -- (-1.1176310359123638,-5.553626342156211) -- (-1.131907475998809,-5.664595326671712) -- (-1.1427916760462147,-5.77442483899646) -- (-1.150033353951318,-5.883030762867738) -- (-1.153404162005574,-5.990317058881683) -- (-1.1526977149964366,-6.096176992374714) -- (-1.1477295545602777,-6.20049435204187) -- (-1.1383370497869394,-6.303144659292081) -- (-1.1243792340759222,-6.403996368340335) -- (-1.1057365782442092,-6.502912057036794) -- (-1.0823106998857228,-6.599749608432806) -- (-1.0540240089824213,-6.694363383083837) -- (-1.020819289767025,-6.786605382089342) -- (-0.9826592188373819,-6.87632640086953) -- (-0.9395258195224679,-6.963377173679059) -- (-0.8914198525000205,-7.047609508857658) -- (-0.8383601426658087,-7.128877414817647) -- (-0.7803828422545386,-7.207038216768401) -- (-0.7175406302123932,-7.281953664177708) -- (-0.6499018478212089,-7.353491028970071) -- (-0.5775495705742857,-7.421524194461898) -- (-0.5005806163038332,-7.485934735033652) -- (-0.4191044895600521,-7.5466129865388725) -- (-0.33324226224185105,-7.603459107450159) -- (-0.24312539047919746,-7.656384130742042) -- (-0.14889446776710497,-7.70531100651079) -- (-0.05069791435125612,-7.750175635331133) -- (0.05130939713474142,-7.790927892349897) -- (0.15696757978046105,-7.827532642116565) -- (0.2661132342581304,-7.859970744150757) -- (0.37858101443046743,-7.88824004924662) -- (0.49420525670688265,-7.91235638651415) -- (0.6128216731480457,-7.932354541157426) -- (0.7342691083188162,-7.948289222989755) -- (0.8583913598895401,-7.960236025685752) -- (0.9850390629857103,-7.968292376770331) -- (1.1140716382859925,-7.972578478344609) -- (1.245359303868615,-7.973238238548737) -- (1.3787851508061253,-7.970440193761647) -- (1.5142472825085078,-7.964378421537723) -- (1.6516610178146705,-7.955273444280377) -- (1.7909611578322933,-7.9433731236525595) -- (1.9321043165260434,-7.928953545724183) -- (2.075071315054153,-7.9123198968564585) -- (2.2198696398533624,-7.893807330323158) -- (2.3665359644722335,-7.873781823668799) -- (2.5151387351528154,-7.852641026803731) -- (2.66578082016069,-7.830815100836168) -- (2.81860222286337,-7.8087675476411125) -- (2.66578082016069,-7.830815100836168) -- (2.5151387351528154,-7.852641026803731) -- (2.3665359644722335,-7.873781823668799) -- (2.2198696398533624,-7.893807330323158) -- (2.075071315054153,-7.9123198968564585) -- (1.9321043165260434,-7.928953545724183) -- (1.7909611578322933,-7.9433731236525595) -- (1.6516610178146705,-7.955273444280377) -- (1.5142472825085078,-7.964378421537723) -- (1.3787851508061253,-7.970440193761647) -- (1.245359303868615,-7.973238238548737) -- (1.1140716382859925,-7.972578478344609) -- (0.9850390629857103,-7.968292376770331) -- (0.8583913598895401,-7.960236025685752) -- (0.7342691083188162,-7.948289222989755) -- (0.6128216731480457,-7.932354541157426) -- (0.49420525670688265,-7.91235638651415) -- (0.37858101443046743,-7.88824004924662) -- (0.2661132342581304,-7.859970744150757) -- (0.15696757978046105,-7.827532642116565) -- (0.05130939713474142,-7.790927892349897) -- (-0.05069791435125612,-7.750175635331133) -- (-0.14889446776710497,-7.70531100651079) -- (-0.24312539047919746,-7.656384130742042) -- (-0.33324226224185105,-7.603459107450159) -- (-0.4191044895600521,-7.5466129865388725) -- (-0.5005806163038332,-7.485934735033652) -- (-0.5775495705742857,-7.421524194461898) -- (-0.6499018478212089,-7.353491028970071) -- (-0.7175406302123932,-7.281953664177708) -- (-0.7803828422545386,-7.207038216768401) -- (-0.8383601426658087,-7.128877414817647) -- (-0.8914198525000205,-7.047609508857658) -- (-0.9395258195224679,-6.963377173679059) -- (-0.9826592188373819,-6.87632640086953) -- (-1.020819289767025,-6.786605382089342) -- (-1.0540240089824213,-6.694363383083837) -- (-1.0823106998857228,-6.599749608432806) -- (-1.1057365782442092,-6.502912057036794) -- (-1.1243792340759222,-6.403996368340335) -- (-1.1383370497869394,-6.303144659292081) -- (-1.1477295545602777,-6.20049435204187) -- (-1.1526977149964366,-6.096176992374714) -- (-1.153404162005574,-5.990317058881683) -- (-1.150033353951318,-5.883030762867738) -- (-1.1427916760462147,-5.77442483899646) -- (-1.131907475998809,-5.664595326671712) -- (-1.1176310359123638,-5.553626342156211) -- (-1.1002344804352122,-5.441588841427019) -- (-1.080011621162745,-5.32853937376796) -- (-1.0572777372910338,-5.214518826098953) -- (-1.0323692925220904,-5.099551158042258) -- (-1.0056435882207595,-4.983642127725649) -- (-0.977478352823248,-4.866778008322502) -- (-0.9482712674972892,-4.748924295328799) -- (-0.9184394280539417,-4.630024404577057) -- (-0.8884187431110241,-4.509998360987172) -- (-0.8586632685081851,-4.388741478054184) -- (-0.8296444779736074,-4.266123028072959) -- (-0.8018504700423497,-4.141984903099788) -- (-0.7757851112263201,-4.016140266650912) -- (-0.7519671154358886,-3.888372196137955) -- (-0.7309290596531317,-3.75843231604029) -- (-0.7132163358567141,-3.626039421814307);
\draw[line width=0.8pt] (2.81860222286337,-7.8087675476411125) -- (2.939882869229073,-7.787057114678665) -- (3.0515493794488133,-7.758726689471583) -- (3.154588096111555,-7.724760547678717) -- (3.249906943493037,-7.686102501946247) -- (3.3383395504726727,-7.643655012935654) -- (3.4206492687853767,-7.598278380016196) -- (3.497533086608362,-7.550790011621895) -- (3.5696254374828595,-7.501963775273037) -- (3.6375019045707986,-7.45252942726217) -- (3.7016828202464227,-7.403172122004622) -- (3.762636761022858,-7.35453200105352) -- (3.82078393781362,-7.307203861779323) -- (3.8764994815290703,-7.261736905713868) -- (3.9301166240078147,-7.21863456655891) -- (3.98192977428305,-7.178354417859181) -- (4.032197490183856,-7.14130816033996) -- (4.081145345271429,-7.107861688909151) -- (4.128968691110257,-7.078335239323858) -- (4.1758353148742575,-7.053003614521492) -- (4.2218879922878365,-7.03209649061536) -- (4.267246935901911,-7.015798802554787) -- (4.312012138704869,-7.00425120944973) -- (4.356265613068474,-6.997550639559906) -- (4.400073525028714,-6.995750914948432) -- (4.443488223901605,-6.998863455799979) -- (4.486550167233929,-7.006858064403413) -- (4.529289741088914,-7.019663788798967) -- (4.571728975666876,-7.037169866089922) -- (4.613883156260791,-7.059226745418774) -- (4.655762329546815,-7.085647190607937) -- (4.697372705209754,-7.1162074624649385) -- (4.7387179529034755,-7.150648580752129) -- (4.779800394546264,-7.188677665820896) -- (4.820622091951128,-7.229969359910405) -- (4.861185829791039,-7.2741673281108135) -- (4.901495993899134,-7.320885838991032) -- (4.941559344903847,-7.369711424890971) -- (4.9813856871989906,-7.420204621878304) -- (5.020988433248791,-7.47190178936974) -- (5.060385063227853,-7.524317009416804) -- (5.099597479996082,-7.57694406565612) -- (5.13865225940855,-7.629258501924216) -- (5.177580795960297,-7.680719760536828) -- (5.2164193437660895,-7.730773400232716) -- (5.255208952875118,-7.778853393781985) -- (5.293995300920642,-7.824384505258925) -- (5.332828420104576,-7.866784746979347) -- (5.371762319517024,-7.905467916102437) -- (5.4108545027907615,-7.939846210897116) -- (5.450165381090657,-7.9693329266729105) -- (5.48975758143804,-7.993345231375323) -- (5.52969515037002,-8.01130702084573) -- (5.57004265293374,-8.022651853745767) -- (5.6108641670155865,-8.026825966146237) -- (5.652222173005336,-8.023291365780532) -- (5.694176338795252,-8.011529005962535) -- (5.736782200114122,-7.9910420391690735) -- (5.780089736196248,-7.961359150286846) -- (5.824141840785368,-7.922037969523872) -- (5.868972688473537,-7.872668564985455) -- (5.914605996374951,-7.812877014914646) -- (5.961053181134704,-7.742329059597217) -- (6.008311411272507,-7.6607338329311485) -- (6.056361554861333,-7.567847673660623) -- (6.008311411272507,-7.6607338329311485) -- (5.961053181134704,-7.742329059597217) -- (5.914605996374951,-7.812877014914646) -- (5.868972688473537,-7.872668564985455) -- (5.824141840785368,-7.922037969523872) -- (5.780089736196248,-7.961359150286846) -- (5.736782200114122,-7.9910420391690735) -- (5.694176338795252,-8.011529005962535) -- (5.652222173005336,-8.023291365780532) -- (5.6108641670155865,-8.026825966146237) -- (5.57004265293374,-8.022651853745767) -- (5.52969515037002,-8.01130702084573) -- (5.48975758143804,-7.993345231375323) -- (5.450165381090657,-7.9693329266729105) -- (5.4108545027907615,-7.939846210897116) -- (5.371762319517024,-7.905467916102437) -- (5.332828420104576,-7.866784746979347) -- (5.293995300920642,-7.824384505258925) -- (5.255208952875118,-7.778853393781985) -- (5.2164193437660895,-7.730773400232716) -- (5.177580795960297,-7.680719760536828) -- (5.13865225940855,-7.629258501924216) -- (5.099597479996082,-7.57694406565612) -- (5.060385063227853,-7.524317009416804) -- (5.020988433248791,-7.47190178936974) -- (4.9813856871989906,-7.420204621878304) -- (4.941559344903847,-7.369711424890971) -- (4.901495993899134,-7.320885838991032) -- (4.861185829791039,-7.2741673281108135) -- (4.820622091951128,-7.229969359910405) -- (4.779800394546264,-7.188677665820896) -- (4.7387179529034755,-7.150648580752129) -- (4.697372705209754,-7.1162074624649385) -- (4.655762329546815,-7.085647190607937) -- (4.613883156260791,-7.059226745418774) -- (4.571728975666876,-7.037169866089922) -- (4.529289741088914,-7.019663788798967) -- (4.486550167233929,-7.006858064403413) -- (4.443488223901605,-6.998863455799979) -- (4.400073525028714,-6.995750914948432) -- (4.356265613068474,-6.997550639559906) -- (4.312012138704869,-7.00425120944973) -- (4.267246935901911,-7.015798802554787) -- (4.2218879922878365,-7.03209649061536) -- (4.1758353148742575,-7.053003614521492) -- (4.128968691110257,-7.078335239323858) -- (4.081145345271429,-7.107861688909151) -- (4.032197490183856,-7.14130816033996) -- (3.98192977428305,-7.178354417859181) -- (3.9301166240078147,-7.21863456655891) -- (3.8764994815290703,-7.261736905713868) -- (3.82078393781362,-7.307203861779323) -- (3.762636761022858,-7.35453200105352) -- (3.7016828202464227,-7.403172122004622) -- (3.6375019045707986,-7.45252942726217) -- (3.5696254374828595,-7.501963775273037) -- (3.497533086608362,-7.550790011621895) -- (3.4206492687853767,-7.598278380016196) -- (3.3383395504726727,-7.643655012935654) -- (3.249906943493037,-7.686102501946247) -- (3.154588096111555,-7.724760547678717) -- (3.0515493794488133,-7.758726689471583) -- (2.939882869229073,-7.787057114678665) -- (2.81860222286337,-7.8087675476411125);
\draw[line width=0.8pt] (6.056361554861333,-7.567847673660623) -- (6.098162424431478,-7.475983507271256) -- (6.139289113461748,-7.372402314326271) -- (6.179352843132004,-7.258544909217158) -- (6.217999479274099,-7.135752173921837) -- (6.2549090203129625,-7.00527079853413) -- (6.289795018958379,-6.868258808928516) -- (6.32240393764747,-6.725790881560142) -- (6.352514437737893,-6.578863445400117) -- (6.379936602451742,-6.42839957100605) -- (6.404511093570138,-6.275253646727898) -- (6.42610824187856,-6.120215842049037) -- (6.444627071362846,-5.96401635806264) -- (6.459994257155924,-5.807329465083296) -- (6.472163017235233,-5.650777327393919) -- (6.481111937870867,-5.494933615127919) -- (6.486843732824407,-5.34032690328663) -- (6.48938393629847,-5.187443857892029) -- (6.488779529636966,-5.036732209274714) -- (6.485097501776048,-4.888603512497135) -- (6.478423343445778,-4.743435694912127) -- (6.468859475122507,-4.601575390856683) -- (6.456523608731934,-4.463340063481018) -- (6.441547043102903,-4.329019913712878) -- (6.424072893171883,-4.198879576357146) -- (6.404254252938162,-4.07315960333069) -- (6.38225229216975,-3.952077734032504) -- (6.358234286859982,-3.8358299528491027) -- (6.332371583434836,-3.7245913337951895) -- (6.304837496710934,-3.6185166722895934) -- (6.2758051416042875,-3.5177409040664775) -- (6.245445198589709,-3.422379311221816) -- (6.2139236129109605,-3.3325275153951357) -- (6.1813992275415846,-3.2482612580865307) -- (6.148021349896454,-3.1696359681089494) -- (6.113927252294032,-3.0966861161757415) -- (6.079239606169323,-3.029424356623485) -- (6.044063850037541,-2.96784045627007) -- (6.008485491208483,-2.911900010408066) -- (5.972567341251598,-2.861542945933344) -- (5.93634668521178,-2.8166818116089813) -- (5.89983238457585,-2.7771998554644206) -- (5.863001913989753,-2.742948889329914) -- (5.825798331726459,-2.713746940506223) -- (5.788127183904567,-2.6893756905695962) -- (5.749853342457624,-2.6695777013120114) -- (5.710797776854134,-2.6540534278166885) -- (5.670734259568291,-2.6424580186688726) -- (5.629386005301406,-2.634397903301883) -- (5.586422243954039,-2.629427166478435) -- (5.541454727348851,-2.6270437099072304) -- (5.494034169704145,-2.626685200994811) -- (5.4436466218581225,-2.6277248087326934) -- (5.38970977924384,-2.62946672671976) -- (5.33156922361488,-2.6311414833199254) -- (5.268494598521722,-2.631901038955075) -- (5.199675718538819,-2.630813670533267) -- (5.12421861224238,-2.6268586430122043) -- (5.041141498938862,-2.618920668097978) -- (4.949370699144167,-2.60578415007908) -- (4.847736478813539,-2.5861272187956827) -- (4.734968827322178,-2.5585155497441883) -- (4.609693169196546,-2.521395971317048) -- (4.470426009596393,-2.473089859177851) -- (4.315570513547478,-2.4117863177716807) -- (4.470426009596393,-2.473089859177851) -- (4.609693169196546,-2.521395971317048) -- (4.734968827322178,-2.5585155497441883) -- (4.847736478813539,-2.5861272187956827) -- (4.949370699144167,-2.60578415007908) -- (5.041141498938862,-2.618920668097978) -- (5.12421861224238,-2.6268586430122043) -- (5.199675718538819,-2.630813670533267) -- (5.268494598521722,-2.631901038955075) -- (5.33156922361488,-2.6311414833199254) -- (5.38970977924384,-2.62946672671976) -- (5.4436466218581225,-2.6277248087326934) -- (5.494034169704145,-2.626685200994811) -- (5.541454727348851,-2.6270437099072304) -- (5.586422243954039,-2.629427166478435) -- (5.629386005301406,-2.634397903301883) -- (5.670734259568291,-2.6424580186688726) -- (5.710797776854134,-2.6540534278166885) -- (5.749853342457624,-2.6695777013120114) -- (5.788127183904567,-2.6893756905695962) -- (5.825798331726459,-2.713746940506223) -- (5.863001913989753,-2.742948889329914) -- (5.89983238457585,-2.7771998554644206) -- (5.93634668521178,-2.8166818116089813) -- (5.972567341251598,-2.861542945933344) -- (6.008485491208483,-2.911900010408066) -- (6.044063850037541,-2.96784045627007) -- (6.079239606169323,-3.029424356623485) -- (6.113927252294032,-3.0966861161757415) -- (6.148021349896454,-3.1696359681089494) -- (6.1813992275415846,-3.2482612580865307) -- (6.2139236129109605,-3.3325275153951357) -- (6.245445198589709,-3.422379311221816) -- (6.2758051416042875,-3.5177409040664775) -- (6.304837496710934,-3.6185166722895934) -- (6.332371583434836,-3.7245913337951895) -- (6.358234286859982,-3.8358299528491027) -- (6.38225229216975,-3.952077734032504) -- (6.404254252938162,-4.07315960333069) -- (6.424072893171883,-4.198879576357146) -- (6.441547043102903,-4.329019913712878) -- (6.456523608731934,-4.463340063481018) -- (6.468859475122507,-4.601575390856683) -- (6.478423343445778,-4.743435694912127) -- (6.485097501776048,-4.888603512497135) -- (6.488779529636966,-5.036732209274714) -- (6.48938393629847,-5.187443857892029) -- (6.486843732824407,-5.34032690328663) -- (6.481111937870867,-5.494933615127919) -- (6.472163017235233,-5.650777327393919) -- (6.459994257155924,-5.807329465083296) -- (6.444627071362846,-5.96401635806264) -- (6.42610824187856,-6.120215842049037) -- (6.404511093570138,-6.275253646727898) -- (6.379936602451742,-6.42839957100605) -- (6.352514437737893,-6.578863445400117) -- (6.32240393764747,-6.725790881560142) -- (6.289795018958379,-6.868258808928516) -- (6.2549090203129625,-7.00527079853413) -- (6.217999479274099,-7.135752173921837) -- (6.179352843132004,-7.258544909217158) -- (6.139289113461748,-7.372402314326271) -- (6.098162424431478,-7.475983507271256) -- (6.056361554861333,-7.567847673660623);
\draw[line width=0.8pt] (4.315570513547478,-2.4117863177716807) -- (4.188480446139957,-2.3621750747296275) -- (4.0676959462683895,-2.322352955696638) -- (3.952662685972053,-2.2917480204938876) -- (3.8428571382991707,-2.269789033015134) -- (3.737785355164979,-2.2559067707982625) -- (3.6369817883777102,-2.2495353086095307) -- (3.5400081538325074,-2.250113276040542) -- (3.4464523388732373,-2.2570850891179197) -- (3.355927352822246,-2.2699021559256947) -- (3.2680703206780115,-2.288024056240409) -- (3.1825415199807283,-2.3109196951789275) -- (3.0990234608458094,-2.3380684308589657) -- (3.0172200091653028,-2.368961176072327) -- (2.9368555529772316,-2.403101473970853) -- (2.857674212002845,-2.4400065477650847) -- (2.779439090351801,-2.4792083244356387) -- (2.7019315723952584,-2.5202544324572917) -- (2.6249506618068867,-2.5627091735357825) -- (2.548312363771801,-2.606154468357318) -- (2.4718491103634146,-2.650190776350802) -- (2.3954092290882048,-2.6944379894627657) -- (2.318856454598406,-2.738536299945019) -- (2.2420694835726125,-2.7821470421550036) -- (2.164941572764311,-2.8249535083688753) -- (2.087380180218317,-2.8666617386072786) -- (2.00930664965515,-2.9070012844738464) -- (1.9306559380233068,-2.9457259470064074) -- (1.8513763862194696,-2.9826144885409054) -- (1.7714295329766236,-3.017471318588038) -- (1.690789971920099,-3.0501271537225927) -- (1.6094452517915254,-3.080439651485508) -- (1.527395819840715,-3.108294018298642) -- (1.4446550083854537,-3.1336035913922564) -- (1.3612490645392188,-3.1563103947452023) -- (1.2772172231068128,-3.176385669037835) -- (1.1926118226479157,-3.1938303756176243) -- (1.1074984647085562,-3.2086756744774867) -- (1.0219562162205027,-3.2209833762468305) -- (0.9360778550685742,-3.230846368195305) -- (0.8499701588258661,-3.2383890142492695) -- (0.763754236656899,-3.243767529020972) -- (0.6775659043886841,-3.2471703258504365) -- (0.5915561027497085,-3.248818338860071) -- (0.5058913587768378,-3.24896531902198) -- (0.4207542903901407,-3.2478981042379886) -- (0.33634415413562874,-3.245936863432387) -- (0.2528774360959178,-3.2434353146573773) -- (0.17058848596880616,-3.240780917211242) -- (0.0897301943137738,-3.238395037769214) -- (0.010574712966398558,-3.236733090527069) -- (-0.0665857813793079,-3.2362846513574235) -- (-0.14143828042064505,-3.237573545978748) -- (-0.2136480943280505,-3.241157912137089) -- (-0.2828579586590072,-3.2476302358005094) -- (-0.3486870981040324,-3.2576173613662345) -- (-0.41073024706474714,-3.2717804758805134) -- (-0.4685566270640278,-3.2908150672711916) -- (-0.5217088809882374,-3.315450856592997) -- (-0.5697019641615401,-3.3464517042855384) -- (-0.612021992252294,-3.384615490444011) -- (-0.6481250460115283,-3.430773969102622) -- (-0.6774359328434987,-3.4857925965307226) -- (-0.6993469052083259,-3.550570333541654) -- (-0.7132163358567141,-3.626039421814307) -- (-0.6993469052083259,-3.550570333541654) -- (-0.6774359328434987,-3.4857925965307226) -- (-0.6481250460115283,-3.430773969102622) -- (-0.612021992252294,-3.384615490444011) -- (-0.5697019641615401,-3.3464517042855384) -- (-0.5217088809882374,-3.315450856592997) -- (-0.4685566270640278,-3.2908150672711916) -- (-0.41073024706474714,-3.2717804758805134) -- (-0.3486870981040324,-3.2576173613662345) -- (-0.2828579586590072,-3.2476302358005094) -- (-0.2136480943280505,-3.241157912137089) -- (-0.14143828042064505,-3.237573545978748) -- (-0.0665857813793079,-3.2362846513574235) -- (0.010574712966398558,-3.236733090527069) -- (0.0897301943137738,-3.238395037769214) -- (0.17058848596880616,-3.240780917211242) -- (0.2528774360959178,-3.2434353146573773) -- (0.33634415413562874,-3.245936863432387) -- (0.4207542903901407,-3.2478981042379886) -- (0.5058913587768378,-3.24896531902198) -- (0.5915561027497085,-3.248818338860071) -- (0.6775659043886841,-3.2471703258504365) -- (0.763754236656899,-3.243767529020972) -- (0.8499701588258661,-3.2383890142492695) -- (0.9360778550685742,-3.230846368195305) -- (1.0219562162205027,-3.2209833762468305) -- (1.1074984647085562,-3.2086756744774867) -- (1.1926118226479157,-3.1938303756176243) -- (1.2772172231068128,-3.176385669037835) -- (1.3612490645392188,-3.1563103947452023) -- (1.4446550083854537,-3.1336035913922564) -- (1.527395819840715,-3.108294018298642) -- (1.6094452517915254,-3.080439651485508) -- (1.690789971920099,-3.0501271537225927) -- (1.7714295329766236,-3.017471318588038) -- (1.8513763862194696,-2.9826144885409054) -- (1.9306559380233068,-2.9457259470064074) -- (2.00930664965515,-2.9070012844738464) -- (2.087380180218317,-2.8666617386072786) -- (2.164941572764311,-2.8249535083688753) -- (2.2420694835726125,-2.7821470421550036) -- (2.318856454598406,-2.738536299945019) -- (2.3954092290882048,-2.6944379894627657) -- (2.4718491103634146,-2.650190776350802) -- (2.548312363771801,-2.606154468357318) -- (2.6249506618068867,-2.5627091735357825) -- (2.7019315723952584,-2.5202544324572917) -- (2.779439090351801,-2.4792083244356387) -- (2.857674212002845,-2.4400065477650847) -- (2.9368555529772316,-2.403101473970853) -- (3.0172200091653028,-2.368961176072327) -- (3.0990234608458094,-2.3380684308589657) -- (3.1825415199807283,-2.3109196951789275) -- (3.2680703206780115,-2.288024056240409) -- (3.355927352822246,-2.2699021559256947) -- (3.4464523388732373,-2.2570850891179197) -- (3.5400081538325074,-2.250113276040542) -- (3.6369817883777102,-2.2495353086095307) -- (3.737785355164979,-2.2559067707982625) -- (3.8428571382991707,-2.269789033015134) -- (3.952662685972053,-2.2917480204938876) -- (4.0676959462683895,-2.322352955696638) -- (4.188480446139957,-2.3621750747296275) -- (4.315570513547478,-2.4117863177716807);
\draw [line width=0.8pt] (-5.751684463967369,-1.357630463419289) circle (2.5);
\draw [line width=0.8pt] (10.738216482070609,-1.250300765911508) circle (2.5);
\draw [->,line width=0.8pt] (0.27284947541322513,-0.8596626280033284) -- (0.27284947541322513,-2.888048606106838);
\draw [->,line width=0.8pt] (4.845937862410231,-2.334852430260426) -- (4.809058117353804,-0.6752639027211913);
\draw [->,line width=0.8pt] (-3.6732832456245146,0.910565134705189) -- (-1.681777012577431,1.8694385061723024);
\draw [->,line width=0.8pt] (8.68143134827869,-3.5150042720661046) -- (6.8005643504008875,-4.5845168787025);
\draw [shift={(8.284903017234004,4.099164115573706)},line width=0.8pt]  plot[domain=-2.6156804469145953:2.723683447666531,variable=\t]({1.*1.5030529384896822*cos(\t r)+0.*1.5030529384896822*sin(\t r)},{0.*1.5030529384896822*cos(\t r)+1.*1.5030529384896822*sin(\t r)});
\draw [->,line width=0.8pt] (6.911203585570172,4.709178875517215) -- (6.616165625118753,4.155982699670803);
\draw (8.017595937262996,4.693577600799352) node[anchor=north west] {$f$};
\draw (-3.7470427357373675,2.4439131523572783) node[anchor=north west] {$\varphi_1$};
\draw (7.943836447150142,-3.678124527009676) node[anchor=north west] {$\varphi_2$};
\draw (-0.7,-1.0596626280033283) node[anchor=north west] {$F$};
\draw (4.956577097579516,-0.9859031378904734) node[anchor=north west] {$h$};
\draw (1.9324380029524635,3.0071093282036898) node[anchor=north west] {$\Omega$};
\draw (1.895558257896036,-4.23699789847679) node[anchor=north west] {$F(\Omega)$};
\end{tikzpicture}
\caption{Stoilow decomposition and associated Riemann mappings when $\Omega' = \Omega$.}\label{figStoilow}
\end{figure}

For Theorem  \ref{theoSobolevStabilityOfDomains} one also needs to develope new composition  results for functions spaces (see e.g. Lemma \ref{lemBesovAdmissibleSpace} or Corollary \ref{lemSobolevAdmissibleSpace}) and, in addition, show  $h  \in W^{s+1,p}\bigl(F(\Omega)\bigr)$. That, on the other hand,
will be  a consequence of Theorem \ref{theoSobolevRiemann}.


 Note that 
 in Theorem \ref{theoSobolevRiemann}, if  $\Omega$ is a Lipschitz-domain with 
 $\varphi \in W^{s+1,p}(\D)$, the condition ``$\varphi^{-1} \in W^{s+1,p}(\Omega)$'' is equivalent to the condition ``$\varphi$ is bi-Lipschitz'', see \rf{eqInvarianceUnderInversionSOBO} in Corollary \ref{lemSobolevAdmissibleSpace} below. 
{To underline the optimality of Theorem \ref{theoSobolevStabilityOfDomains}} it is useful to combine the above results in the following form.
 
 \begin{theorem}\label{theoSobolevRiemannKaksi}
Let  $\, s > 0  $ and $1 < p < \infty$  with $sp>2$. If $\Omega$ is a simply  connected, bounded domain and $g:\D \to \Omega$  is a $\mu$-quasiconformal mapping, then the following are equivalent:
\begin{enumerate}
\item $\Omega$ is a $B^{s+1-\frac1p}_{p,p}$-domain  and $\mu\in W^{s,p}(\Omega)$.
\item $g$ is bi-Lipschitz and $g \in W^{s+1,p}(\D)$.
\end{enumerate}
\end{theorem}
\noindent   In particular, {the above  bi-Lipschitz condition} is required to exclude boundary cusps. 
\medskip

Theorem \ref{theoSobolevStabilityOfDomains} has  natural counterparts in the context of 
  ``supercritical'' Triebel-Lizorkin functions, at least for fractional smoothness parameters,  $s \notin \N$:
\begin{theorem}\label{theoTriebelStabilityOfDomains}
Let $ s \in \R_+ \setminus \N$, let  $1 < p < \infty$ with $sp>2$, and let  $1<q<\infty$. Suppose $\Omega, \Omega'$ are simply connected, 
 bounded $B^{s+1-\frac1p}_{p,p}$-domains and $f:\Omega \to\Omega'$ is a quasiconformal mapping, with $\mu_f \in F^s_{p,q}(\Omega)$. Then $f\in F^{s+1}_{p,q}(\Omega)$.   
\end{theorem}

The argument in the Triebel-Lizorkin spaces  follows a similar strategy as in the  Sobolev setting. 
However, note that our approach fails for $s \in \N$, see Remark \ref{blablabla}.
For details and proof of Theorem \ref{theoTriebelStabilityOfDomains} see Section \ref{secSobolev}.
On the other hand, for conformal mappings Theorem \ref{theoSobolevRiemann} 
 has a version  in the Triebel-Lizorkin setting valid even for integer $s$, see Theorem \ref{theoTriebelRiemann} below. Similarly, our results work also for finitely connected Besov domains, see Sections \ref{secRiemann} and \ref{secSobolev} below.


\medskip


Let us then turn to the other aspect of global regularity and the study of principal mappings. Here our main goal is to show that dilatation $\mu \in  F^s_{p,q}(\Omega)$
implies for the principal mapping the regularity $f\in F^{s+1}_{p,q}(\Omega)$ in the whole domain $\Omega$. However, the situation here is more subtle and accordingly the proof is much more involved.
\begin{theorem}\label{theoInvertBeltrami}
Let $0<s<1$, let $2<sp <\infty$ (see Figure \ref{figIndicesMuN12Conclusions}), let $1<q<\infty$ and let $\Omega$ be a bounded $B^{s+1-1/p}_{p,p}$-domain. Suppose  $\mu \in F^s_{p,q}(\Omega)$ with $\|\mu\|_{L^\infty}<1$ and $\supp(\mu)\subset\overline{\Omega}$. Then, the principal solution  $f$ to \eqref{eqBeltrami} is in the space $F^{s+1}_{p,q}(\Omega)$.
\end{theorem}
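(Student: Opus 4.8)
\emph{Reduction.} The plan is to write the principal solution as $f(z)=z+\Cauchy h(z)$ with $h=\bar\partial f$, so that $h$ is compactly supported in $\overline\Omega$, $\partial f=1+\Beurling h$, and on $\Omega$ the Beltrami equation \rf{eqBeltrami} becomes the fixed point identity $h=\mu+\mu\,\Beurling h=\mu+\mu\,\Beurling_\Omega h$, where $\Beurling_\Omega$ is the Beurling transform of the zero extension of a function on $\Omega$, restricted back to $\Omega$, and the last equality uses $\supp h\subset\overline\Omega$. Since the Cauchy transform gains one derivative and $\Beurling_\Omega$ is bounded on $F^s_{p,q}(\Omega)$ when $\Omega$ is an $F^{s+1-1/p}_{p,p}$-domain --- the Triebel--Lizorkin analogue of the mapping property used in the Sobolev setting of \cite[Theorem~1.1]{PratsQuasiconformal} --- it suffices to prove $h\in F^s_{p,q}(\Omega)$: then $\bar\partial f=h$ and $\partial f-1=\Beurling_\Omega(h)$ both lie in $F^s_{p,q}(\Omega)$, whence $f\in F^{s+1}_{p,q}(\Omega)$.

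\emph{A priori bounds.} First I would record the a priori Hölder information. As $sp>2$ one has the supercritical embeddings $F^s_{p,q}(\Omega)\hookrightarrow\mathcal C^\alpha(\overline\Omega)$ and $F^{s+1-1/p}_{p,p}(\R)=B^{s+1-1/p}_{p,p}(\R)\hookrightarrow\mathcal C^{1+\alpha}(\R)$, with $\alpha:=s-\tfrac2p\in(0,1)$; hence $\mu\in\mathcal C^\alpha(\overline\Omega)$ and $\Omega$ is a bounded $\mathcal C^{1+\alpha}$-domain. By the H\"older-scale version of the principal solution condition (proved in \cite{CittiFerrari}) this gives $f\in\mathcal C^{1+\alpha}(\overline\Omega)$, and in particular $h=\bar\partial f\in\mathcal C^\alpha(\overline\Omega)$, $\partial f\in\mathcal C^\alpha(\overline\Omega)$ is bounded on $\overline\Omega$ (and $f$ is bi-Lipschitz, as needed elsewhere in the scheme), and $h\in L^r(\Omega)$ for every $r<\infty$. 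These bounds will play the role of the ``$\partial f\in L^\infty$'' input that is classically used to close the Sobolev argument.

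\emph{Main step: freezing the coefficient.} The core is to prove $h\in F^s_{p,q}(\Omega)$ using an intrinsic characterisation of this space by finite differences $\Delta^M_t$ of order $M>s$ truncated at $|t|\lesssim\dist(\cdot,\partial\Omega)$. Fix $x_0\in\overline\Omega$ and set $c:=\mu(x_0)$; since $|c|<1$, the constant coefficient operator $I-c\,\Beurling$ has Fourier symbol $1-c\,\bar\xi/\xi=(\xi-c\,\bar\xi)/\xi$, which is homogeneous of degree zero, smooth on $\R^2\setminus\{0\}$ and bounded away from zero, so by H\"ormander--Mikhlin $(I-c\,\Beurling)^{-1}$ is bounded on $L^p(\R^2)$ \emph{for every} $p\in(1,\infty)$ --- this is precisely what removes any upper restriction on $\norm{\mu}_{L^\infty}$. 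On a small ball $B\ni x_0$ write $\mu=c+(\mu-c)$ with $\norm{\mu-c}_{L^\infty(B)}$ as small as desired (continuity of $\mu$). Applying $\Delta^M_t$ to $h=\mu+\mu\,\Beurling_\Omega h$ and expanding by the discrete Leibniz rule produces: a leading term $(\Delta^M_t\mu)\cdot(\text{translate of }\partial f)$, controlled by $\norm{\mu}_{F^s_{p,q}}\norm{\partial f}_{L^\infty}$; cross terms $(\Delta^j_t\mu)(\Delta^{M-j}_t\partial f)$ with $0<j<M$, of lower order by the H\"older control on $\mu$ and $\partial f$ together with interpolation; and the dangerous term, which after freezing reads $\mu\,\Beurling_\Omega(\Delta^M_t h)=c\,\Beurling(\Delta^M_t h)+(\mu-c)\,\Beurling(\Delta^M_t h)+(\text{boundary error})$. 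The first piece is transported to the left and inverted by $(I-c\,\Beurling)^{-1}$ (bounded on every $L^p$, also vector-valued, hence on the relevant $L^p(\ell^q)$); the second has small operator norm on $L^p(B)$ and is absorbed; the boundary error, present only when $B$ meets $\partial\Omega$, comes from flattening $\partial\Omega$ and replacing $\Beurling$ by $\Beurling_\Omega$, and is controlled by the $F^{s+1-1/p}_{p,p}$-regularity of $\partial\Omega$. Taking $L^p(\ell^q)$ norms, summing the local estimates over a finite cover of $\overline\Omega$, and using $h\in\bigcap_{r<\infty}L^r(\Omega)$ to absorb the lowest order contributions, one obtains $h\in F^s_{p,q}(\Omega)$, and hence the theorem.

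\emph{Main difficulty.} The hard part will be the treatment of the boundary error near $\partial\Omega$: transferring the translation-invariant operator $\Beurling$ to $\Beurling_\Omega$ after a $\mathcal C^{1+\alpha}$ (more precisely $F^{s+1-1/p}_{p,p}$) flattening of the boundary, and estimating the resulting commutator and curvature terms uniformly in the fractional $F^s_{p,q}$-norm. This is where the hypothesis that $\Omega$ is an $F^{s+1-1/p}_{p,p}$-domain is genuinely used, and it is the Triebel--Lizorkin counterpart of the boundary analysis carried out for the Sobolev scale in \cite[Theorem~1.1]{PratsQuasiconformal}; the bookkeeping of fractional differences and paraproducts makes it heavier than the integer-smoothness case.
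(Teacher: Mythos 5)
Your route is genuinely different from the paper's: the paper never localizes or freezes coefficients, but instead runs a global Fredholm argument, writing $I_\Omega-(\mu\Beurling_\Omega)^m$ as an invertible operator $I_\Omega-\mu^m(\Beurling^m)_\Omega$ (invertible for large $m$ thanks to the polynomial-in-$m$ bound $\norm{(\Beurling^m)_\Omega}_{F^s_{p,q}(\Omega)\to F^s_{p,q}(\Omega)}\lesssim m^2\norm{\nu}_{F^{s-1/p}_{p,p}(\partial\Omega)}$ of Theorem \ref{theoIterates}) plus a compact perturbation built from the commutator $[\mu,\Beurling_\Omega]$ (Lemma \ref{lemCompactnessCommutator}) and the Beurling reflection $\mathcal{R}_m$ (Theorem \ref{theoCompactnessReflection}), and then uses index continuity and $L^2$-injectivity. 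Your reduction to $h=(I_\Omega-\mu\Beurling_\Omega)^{-1}\mu$ and the final lifting step agree with the paper, and the a priori H\"older input from \cite{CittiFerrari} is legitimate.

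However, there is a genuine gap, and it sits exactly where the theorem is hard. Your ``main step'' hinges on two things you assert rather than prove. First, the boundedness of $\Beurling_\Omega$ on $F^s_{p,q}(\Omega)$ for an $F^{s+1-1/p}_{p,p}$-domain and \emph{general} $q$ is not available in the literature: \cite{CruzTolsa} covers $q\in\{2,p\}$ only, and establishing it (together with the quantitative growth in $m$) is a substantial part of the paper, requiring the Dorronsoro beta machinery of Lemma \ref{lemBeurlingIteratesCharacteristic} and the $T(1)$ theorem of \cite{PratsSaksman}; you use it both in the reduction ($\partial f-1=\Beurling_\Omega h$) and implicitly when taking differences of the fixed-point identity. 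Second, and more seriously, the ``boundary error'' in your freezing argument is precisely the failure of $\Beurling_\Omega$ to commute with the finite differences $\Delta^M_t$: the discrepancy $\Delta^M_t(\Beurling_\Omega h)-\Beurling_\Omega(\Delta^M_t h)$ involves $\Beurling$ acting across $\partial\Omega$ on translates of $\chi_\Omega h$, i.e.\ the same objects ($\Beurling^m\chi_\Omega$, $\chi_\Omega\Beurling(\chi_{\Omega^c}\Beurling(\chi_\Omega\cdot))$) whose control in $\dot F^s_{p,q}(\Omega)$ occupies Sections \ref{secIterates}--\ref{secMeyer} of the paper, with the kernel expression of Proposition \ref{propoKernelExpression}, chains of Whitney cubes and Meyer polynomials. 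Saying this term is ``controlled by the $F^{s+1-1/p}_{p,p}$-regularity of $\partial\Omega$'' is the statement to be proved, not an estimate; note also that this regularity is borderline (cf.\ the sharpness discussion via \cite{TolsaSharp}), so a crude $\mathcal{C}^{1+\alpha}$ flattening of the boundary, which only sees $\alpha=s-\frac2p$ derivatives, loses exactly the amount of smoothness you need and cannot close the $F^s_{p,q}$ estimate. As written, the proposal therefore reduces the theorem to an unproved claim equivalent in difficulty to Theorem \ref{theoIterates} and Proposition \ref{lemSmallNormCloseToBoundary}.
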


We expect also this theorem to be sharp, in particular in view of the results in \cite{TolsaSharp} for the case of smoothness one. The counterpart of this result in the Sobolev scale can be found in \cite{PratsQuasiconformal}. The proof follows the scheme of Iwaniec for $VMO$ Beltrami coefficients adapted by Cruz, Mateu and Orobitg for the domain-restricted setting.
The key idea is to reduce it to three steps using a Fredholm theory argument. First, one needs to show that the Beurling transform (see \ref{eqBeurling} for its definition) restricted to $\Omega$, that is $\Beurling_\Omega=\chi_\Omega \Beurling(\chi_\Omega \cdot)$ is bounded in $F^s_{p,q}(\Omega)$ assuming the Besov regularity of the boundary. For the cases $q\in \{2,p\}$ this can be found in \cite{CruzTolsa}. 
{Next} we need to show the compactness of the commutator $[\mu,\Beurling_\Omega]$, which was studied in \cite{CruzMateuOrobitg}  for more regular domains, but the adaptation to our context is rather straight-forward, see Lemma \ref{lemCompactnessCommutator}.

The third step is to check the compactness  of the {\em Beurling reflection} $\mathcal{R}:=\chi_\Omega \Beurling\left(\chi_{\Omega^c} \Beurling (\chi_\Omega \,\cdot)\right)$.  In Proposition \ref{lemSmallNormCloseToBoundary} we show that not only is $\mathcal{R}$  compact in $F^{s}_{p,q}(\Omega)$, but it is in fact smoothing in the following sense:
$$\norm{\mathcal{R} f }_{\dot F^s_{p,q}(\Omega)} \lesssim_h \norm{f}_{\mathcal{C}^h(\Omega)}$$
for every $h>0$. To verify that this embedding holds we make use of several techniques,  including  the approximation of the boundary of the domain by straight lines, as Cruz and Tolsa introduced in \cite{CruzTolsa}, {which in turn uses the fact that the kernel of the Beurling transform is even.}  That allows us to replace the transform of the characteristic function of the domain at a given point by a sum of beta coefficients introduced by Dorronsoro in \cite{Dorronsoro}. We also use a recent expression of the kernel of the reflection obtained in \cite{PratsQuasiconformal} (see Section \ref{secReflection}) and the techniques on chains of Whitney cubes introduced in \cite{PratsTolsa, PratsSaksman}.

\medskip

The first results on the global regularity of $f{|_{\Omega}}$, for a principal mapping $f$ with Beltrami coefficient supported on $\Omega$,
was obtained in \cite{MateuOrobitgVerdera}. In this work Mateu, Orobitg and Verdera  showed  that,  surprisingly,  given a $C^{1+\varepsilon}$-domain $\Omega$ , with $\varepsilon\in(0,1), $ and 
a Beltrami coefficient $\mu$ for which $ \mu |_{\Omega} \in C^\varepsilon (\Omega)$ and  $\mu |_{\C \setminus \Omega} = 0$, 
 the corresponding principal solution $f$ to \rf{eqBeltrami}
 is bilipschitz in all of $\C$,  in  spite of the possible discontinuity of $\mu$ at  $\partial \Omega$. In addition, $f |_{\Omega} \in C^{1+\varepsilon}(\Omega)$. {A key ingredient in their proof is again the even character of the Beurling transform kernel}.
 
 Later in \cite{CruzMateuOrobitg} it was shown that something can be said  about the Sobolev and Besov regularity as well for such domains. Namely, when $0<s<\varepsilon<1$ and $2 <ps<\infty$, if $\Omega$ is a $C^{1+\varepsilon}$-domain, $\norm{\mu}_{L^\infty}<1$, $\supp(\mu)\subset\overline{\Omega}$ and $f$ is a $\mu$-principal mapping, then
\begin{equation*}
\mu \in B^s_{p,p}(\Omega) \implies f \in B^{s+1}_{p,p}(\Omega),
\end{equation*}
and the same happens in the scale $F^s_{p,2}$. Note that for any interval $I$, and thus for any parametrization of the boundary,
$$C^{1+\varepsilon}(I) \subset C^{s+1-\frac{1}{p}}(I) \subset B^{s+1-\frac{1}{p}}_{p,p}(I)\subset C^{s+1-\frac{2}{p}}(I),$$ 
(we used the embeddings in  \cite[Section 2.7]{TriebelTheory} for the last step)
that is, the assumptions in Theorem \ref{theoInvertBeltrami} are strictly weaker than the conditions in \cite{CruzMateuOrobitg}. 

One may note that in all our results we assume $sp>2$. This comes from that fact that in the (subcritical) range $sp<2$ derivatives $\overline{\partial} f$ and $\partial f$ do not even locally need to gain the same  regularity  as the dilatation has. The underlying reason for this lies in the fact   the Neumann series for the solution contains product terms whose regularity deteriorates because the Sobolev (or Triebel-Lizorkin) space in question is {no longer an algebra.} We refer the reader to \cite{ClopFaracoMateuOrobitgZhong,PratsBeltrami} (see especially the examples in  \cite[p. 205]{ClopFaracoMateuOrobitgZhong})  for basic regularity results in the subcritical  case.  One also expects a small loss for the critical case according to the results in \cite{ClopFaracoMateuOrobitgZhong,BaisonClopOrobitg}.  {Note also that one may consider regularity results also for  $\R$-linear Beltrami equations, see e.g. \cite[Chapter 15]{AstalaIwaniecMartin}. However, in that case one may have smooth solutions without the coefficients being smooth, so the exact equivalence  as in our  theorems need not  hold even locally.}

In turn, Theorem \ref{theoInvertBeltrami} is a fractional counterpart to \cite[Theorem 1.1]{PratsQuasiconformal}, where the Sobolev spaces $W^{s,p}$ with $s\in\N$ and $p>2$ where dealt with. Taking a look at Figure \ref{figIndicesMuN12Conclusions}, it seems natural to conjecture the following:

\begin{conjecture}[{see \cite[Conclusions]{PratsTesi}}]\label{conjInvertBeltrami}
For $s\in \R$, we write $s=n_s+ \{s\} $, with $n_s \in \Z$ and $0<\{s\}\leq 1$. Let  $\Omega$ be a bounded $B^{s+1-1/p}_{p,p}$-domain for some $2 <p\{s\}<\infty$ (see Figure \ref{figIndicesMuN12Conclusions}) and let $\mu\in W^{s,p}(\Omega)$ with $\norm{\mu}_{L^\infty}<1$ and $\supp(\mu)\subset\overline{\Omega}$. Then, the principal solution  $f$ to \rf{eqBeltrami} is in the space $W^{s+1,p}(\Omega)$.
\end{conjecture}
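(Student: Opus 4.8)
Although Conjecture~\ref{conjInvertBeltrami} remains open, the natural line of attack is the following. The plan is to run the Iwaniec--Cruz--Mateu--Orobitg scheme of Section~\ref{secPrincipal} in the scale $F^s_{p,2}$ for an arbitrary $s=n_s+\{s\}$, the cases $0<s<1$ (Theorem~\ref{theoInvertBeltrami}) and $s\in\N$, i.e.\ $\{s\}=1$ (\cite{PratsQuasiconformal}), being already available. Writing \rf{eqBeltrami} as $\bar\partial f=\mu\,\partial f$ and using $\partial f=\Beurling(\bar\partial f)$ in $\C$ --- where $\bar\partial f$ is supported in $\overline\Omega$ because $f-z$ is holomorphic off $\Omega$ --- the function $\omega:=\bar\partial f$ solves $(I-\mu\Beurling_\Omega)\omega=\mu$ with $\Beurling_\Omega:=\chi_\Omega\Beurling(\chi_\Omega\,\cdot)$. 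Hence it suffices to prove that $I-\mu\Beurling_\Omega$ is invertible on $F^s_{p,2}(\Omega)$: then $\bar\partial f\in F^s_{p,2}(\Omega)$ and, since $\partial f|_\Omega=\Beurling_\Omega\omega\in F^s_{p,2}(\Omega)$ by the boundedness of $\Beurling_\Omega$, we obtain $\nabla f\in F^s_{p,2}(\Omega)$ and therefore $f\in F^{s+1}_{p,2}(\Omega)$; the bi-Lipschitz property of item~(3) of the introduction also follows, since $\nabla f\in F^s_{p,2}(\Omega)\hookrightarrow L^\infty$ (because $sp\ge\{s\}p>2$).

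The invertibility of $I-\mu\Beurling_\Omega$ on $F^s_{p,2}(\Omega)$ rests, exactly as in Section~\ref{secPrincipal}, on three ingredients, each of which must be extended from $0<s<1$ to arbitrary $s$:
\begin{enumerate}[(1)]
\item \textbf{Polynomial growth of the iterates of $\Beurling_\Omega$.} The analogue of Theorem~\ref{theoIterates} for $s>1$: if $\partial\Omega$ admits $F^{s+1-\frac1p}_{p,p}$-regular parameterisations, then $\Beurling_\Omega$ is bounded on $F^s_{p,2}(\Omega)$ with $\norm{\Beurling_\Omega^{\,n}}_{F^s_{p,2}(\Omega)\to F^s_{p,2}(\Omega)}\lesssim n^{C}$. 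Since for a constant $\mu_0$ one has $(\mu_0\Beurling_\Omega)^n=\mu_0^{\,n}\Beurling_\Omega^{\,n}$, the series $\sum_n(\mu_0\Beurling_\Omega)^n$ converges whenever $|\mu_0|<1$ and $I-\mu_0\Beurling_\Omega$ is then invertible on $F^s_{p,2}(\Omega)$, even though $\Beurling_\Omega$ need not have norm below $1$.
\item \textbf{Compactness of the commutator $[\mu,\Beurling_\Omega]$.} Because $sp>2$, the space $F^s_{p,2}(\Omega)$ is a multiplication algebra, so $M_\mu$ is bounded on it; approximating $\mu$ by $\mathcal{C}^\infty$ functions in $F^s_{p,2}(\Omega)$ and exploiting the smoothing effect of the non-singular part of the Beurling kernel, the adaptation of Lemma~\ref{lemCompactnessCommutator} yields compactness of $[\mu,\Beurling_\Omega]$ on $F^s_{p,2}(\Omega)$.
\item \textbf{Smoothing of the Beurling reflection.} Proposition~\ref{lemSmallNormCloseToBoundary} gives $\norm{\mathcal R f}_{\dot F^s_{p,q}(\Omega)}\lesssim_h\norm{f}_{\mathcal{C}^h(\Omega)}$ for every $h>0$; one must verify that this persists for every $s$ with $\partial\Omega\in F^{s+1-\frac1p}_{p,p}$, so that $\mathcal R$ is compact on $F^s_{p,2}(\Omega)$. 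This compactness controls the error in the identities $\Beurling_\Omega^{\,n}=\chi_\Omega\Beurling^n(\chi_\Omega\,\cdot)+(\text{terms built from }\mathcal R)$ and lets the Fredholm structure descend from $\C$ to $\Omega$.
\end{enumerate}

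Granting these, the argument closes as in Section~\ref{secPrincipal}. As $\mu\in F^s_{p,2}(\Omega)$ is continuous, hence in $\mathrm{VMO}$, the operator $I-\mu\Beurling$ is invertible on $L^p(\C)$ by Iwaniec's theorem \cite{Iwaniec}. Combining the invertibility of the constant-coefficient models $I-\mu_0\Beurling_\Omega$ from ingredient~(1) with the commutator compactness of~(2) and the reflection compactness of~(3), a partition of unity subordinate to balls on which $\mu$ is nearly constant shows that $I-\mu\Beurling_\Omega$ is Fredholm of index $0$ on $F^s_{p,2}(\Omega)$. It is injective, because any $\omega$ in its kernel satisfies $\omega=\mu\,\chi_\Omega\Beurling(\chi_\Omega\omega)=\mu\Beurling\omega$ (using $\supp\mu\subset\overline\Omega$), i.e.\ $(I-\mu\Beurling)\omega=0$ in $L^p(\C)$, forcing $\omega=0$. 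Hence $I-\mu\Beurling_\Omega$ is invertible on $F^s_{p,2}(\Omega)$, $\omega:=(I-\mu\Beurling_\Omega)^{-1}\mu\in F^s_{p,2}(\Omega)$, and as above $f\in F^{s+1}_{p,2}(\Omega)$.

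The main obstacle is the extension of the quantitative operator estimates, ingredients~(1) and~(3), to $s>1$ using only the $F^{s+1-\frac1p}_{p,p}$-regularity of $\partial\Omega$. For $s>1$ one must differentiate the relevant kernels up to order $n_s$ before measuring the remaining $\{s\}$ units of fractional smoothness, which forces a higher-order version of the Cruz--Tolsa approximation of $\partial\Omega$ by straight lines together with the expansion of $\Beurling(\chi_\Omega)$ into Dorronsoro $\beta$-coefficients \cite{Dorronsoro}, a careful analysis of the $s$-th order behaviour near $\partial\Omega$ of the reflection kernel from Section~\ref{secReflection}, and the chains of Whitney cubes of \cite{PratsTolsa,PratsSaksman}. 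The natural route seems to be an induction on $n_s$; the delicate point is that the available boundary smoothness is exactly critical --- the cancellation of the $\beta$-coefficients is what recovers the derivative that $\Beurling(\chi_\Omega)$ would otherwise be missing --- so there is no slack in the commutator bookkeeping, and producing iterate bounds that are genuinely polynomial, rather than merely finite, in that critical regime is the heart of the difficulty.
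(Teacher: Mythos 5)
There is a genuine gap, and indeed you acknowledge it yourself: the statement you are addressing is Conjecture~\ref{conjInvertBeltrami}, which the paper does not prove — it only establishes the cases $s\in\N$ (via \cite{PratsQuasiconformal}) and $0<s<1$ (Theorem~\ref{theoInvertBeltrami}, Section~\ref{secPrincipal}). What you have written is a research program, conditional on your ingredients (1) and (3) for $s>1$, and those two ingredients are precisely the open content of the conjecture: the polynomial bounds for the iterates of the truncated Beurling transform and the smoothing/compactness of the reflection $\mathcal{R}_m$ are proved in the paper only for $0<s<1$, using Lemma~\ref{lemBoundForBeurling}, Lemma~\ref{lemSummingTheBetas}, Proposition~\ref{propoKernelExpression} and the Meyers-polynomial machinery of Section~\ref{secMeyer}, all of which are calibrated to first-order differences; no higher-order analogue is supplied, so nothing in your text closes the conjecture.

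Beyond that, two points in your sketch would need repair even as a program. First, you conflate $(\Beurling_\Omega)^n$ with $(\Beurling^n)_\Omega$: Theorem~\ref{theoIterates} gives polynomial growth for $(\Beurling^m)_\Omega=\chi_\Omega\Beurling^m(\chi_\Omega\,\cdot)$, not for the composition $(\Beurling_\Omega)^m$, so for constant $\mu_0$ the identity $(\mu_0\Beurling_\Omega)^n=\mu_0^n(\Beurling_\Omega)^n$ does not combine with that theorem to give a convergent Neumann series; in the proven range the paper instead inverts only $A^{(1)}_m=I_\Omega-\mu^m(\Beurling^m)_\Omega$ in \rf{eqFactorizePm} and absorbs $(\Beurling^m)_\Omega-(\Beurling_\Omega)^m$ as a compact error through $\mathcal{R}_m$ (Theorem~\ref{theoCompactnessReflection}), together with the commutator (Lemma~\ref{lemCompactnessCommutator}). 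Your ``freeze the coefficient on small balls'' Fredholm argument is likewise not the paper's route and would itself require localization estimates for $\Beurling_\Omega$ that are not available here; the paper's argument is global, via $P_m$ and \rf{eqPmToIMu}. Second, your bi-Lipschitz claim from $\nabla f\in F^s_{p,2}(\Omega)\hookrightarrow L^\infty$ only yields a Lipschitz upper bound, not the lower bound; in the proven cases bi-Lipschitzness is imported from \cite{MateuOrobitgVerdera} using the H\"older regularity of the domain (Remark~\ref{remAdmissibleSobolev}), though this is not needed for the conjecture's stated conclusion. Your injectivity argument and the final passage from $(I_\Omega-\mu\Beurling_\Omega)^{-1}\mu$ to $f\in F^{s+1}_{p,2}(\Omega)$ do match the paper's proof of Theorem~\ref{theoInvertBeltrami}, but the conjecture itself remains unproven by your proposal.
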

\noindent Note that $n_s=s-1$ when $s$ is an integer, while  for  $s\notin \Z$ it is the integer part. In particular, $n_s$ is always smaller than $s$.

By the Sobolev embedding (combine \cite[Section 2.7]{TriebelTheory} with appropriate extension theorems), restricting ourselves to the indices $p\{s\}>2$ implies that if $f\in W^{s,p}(\Omega)$ then all its weak derivatives up to order $n_s$ are continuous, and therefore, ordinary derivatives. The same holds for the parameterizations of the boundaries. 

The authors wonder whether Theorem \ref{theoInvertBeltrami} will be valid only in the range covered by Conjecture \ref{conjInvertBeltrami}, where  the  techniques developed so far might apply, or whether  the statement holds true in  the whole supercritical region $sp>2$ like in Theorem \ref{theoSobolevStabilityOfDomains}.


\begin{figure}[ht]
 \centering
\begin{subfigure}{0.4 \textwidth}
 \centering{\includegraphics[width=\textwidth]{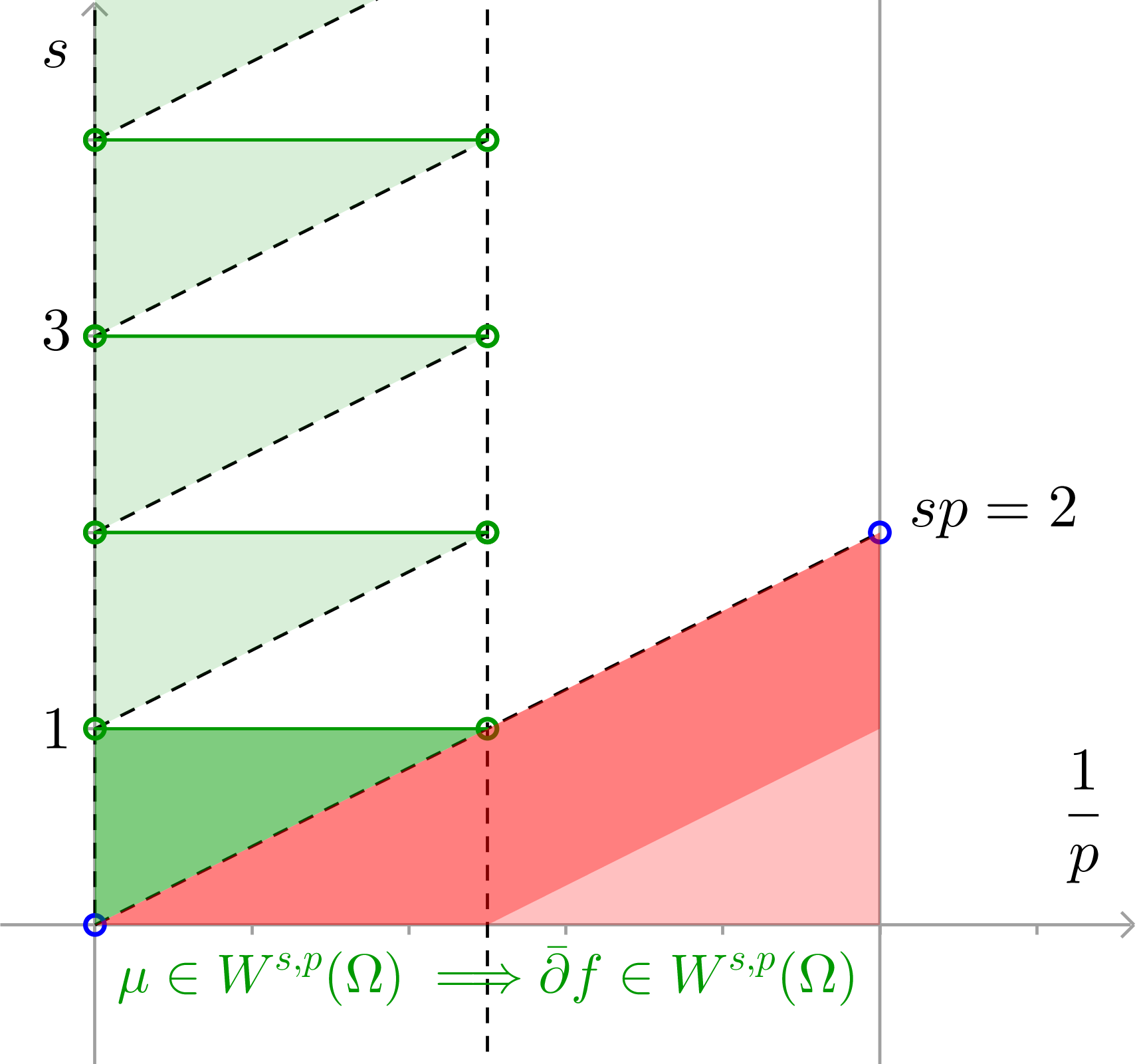}}
\end{subfigure}
\caption{Light green represents the  results conjectured in \ref{conjInvertBeltrami}, dark  green stands for the known
 results, red is for the range of indices where the result is known to be false, and light red where it is conjectured to fail: In case $s\in\N$ (see \cite{PratsQuasiconformal}) or $s<1$ (see Theorem \ref{theoInvertBeltrami} above),  Conjecture \ref{conjInvertBeltrami} holds. In case $\frac2p-1<s<\frac2p$ there are radial stretchings which are counterexamples, see \cite{ClopFaracoMateuOrobitgZhong}.}\label{figIndicesMuN12Conclusions}
\end{figure}

{
\subsection{Structure of the paper}\label{subse:structure}
In Section \ref{secPreliminaries} we provide a {summary} of notation, the definitions of domains, and function spaces used throughout the paper, and finally we collect some {known and also a couple of new auxiliary results on} functions spaces, paying special attention to their behavior under multiplication, composition and inversion. 

We devote Section \ref{secRiemann} to the study of the Riemann mapping. {We especially  establish Theorem \ref{theoTriebelRiemann} which implies  Theorem \ref{theoSobolevRiemann}, but also covers the whole Triebel-Lizorkin scale, and relaxes the condition that  $\varphi^{-1}$ is in the function space by the assumption that $\varphi$ is bi-Lipschitz. After that, we extend the result to finitely connected domains in Corollary \ref{finiteconn} and use suitable properties of the function spaces to deduce Theorem \ref{theoSobolevRiemann}. Here we need to deal with a subtlety: The functions $f\in W^{s,p}(\Omega)$ are defined as  restrictions of Triebel-Lizorkin functions in the complex plane, see   Definition  \ref{defRestrictionNorm}. However, in the case $s\in \N$, it is quite natural to work with intrinsic definitions for Sobolev spaces instead, see Definition \ref{defSobolevNonHomogeneous}. These function spaces are known to coincide at least if the domain is Lipschitz, but we don't have this information a-priori. For that purpose we establish  Proposition \ref{intrinsic} which verifies that a conformal map $\varphi$ with finite norms $\| D^{k} \varphi \|_{L^p(\D)}$ and $\| D^{k} \varphi^{-1} \|_{L^p(\Omega)} $ must be bi-Lipschitz, granting therefore that our domain is Lipschitz.}

In Section \ref{secSobolev} we prove Theorems  \ref{theoSobolevStabilityOfDomains}, \ref{theoSobolevRiemannKaksi} and \ref{theoTriebelStabilityOfDomains}. To complete {this  we} just need to check that the traces of Sobolev spaces $W^{s,p}(\Omega)$ and Triebel-Lizorkin spaces $F^{s}_{p,q}(\Omega)$ in Besov domains are precisely the Besov spaces {defined in} Section \ref{BesovDomains}. Once this is settled, the aforementioned theorems are deduced from our results in Section  \ref{secRiemann} by means of Stoilow factorization.

{Finally we prove Theorem \ref{theoInvertBeltrami} in Section \ref{secPrincipal}.  The proof is done by using Fredholm theory to invert the operator $(I-\mu \Beurling)(\chi_\Omega \cdot)$ in $F^s_{p,q}(\Omega)$, and this is reduced in  Section \ref{secOutlineBeltrami}
 to establishing several key auxiliary facts: the polynomial growth of the operator norm of the truncated iterates of the Beurling transform $\Beurling^m(\chi_\Omega \cdot)$ on $F^s_{p,q}(\Omega)$,  and  the compactness of the commutator $[\mu, \Beurling(\chi_\Omega\cdot)]$ together with that of the ``reflection'' $\mathcal{R}_m g:=\chi_\Omega \Beurling\left(\chi_{\Omega^c} \Beurling^{m}(\chi_\Omega \, \cdot)\right)$ on $F^s_{p,q}(\Omega)$. The aim of the latter subsections is to verify these key facts. } For that purpose we first  define  in Section \ref{secBetas} suitable beta coefficients that serve to measure the Besov smoothness of functions {a la}  Dorronsoro. {After that} we recall some results from the literature that explain how these betas can appear when working with truncated Beurling transforms and their relation to the Besov character of the boundary of the domain, see lemmata \ref{lemNormalVectorBetas}, \ref{lemBoundForBeurling} and \ref{lemSummingTheBetas}. These results {will} be crucial in the subsequent sections. 

In Section \ref{secIterates} we turn our attention to the boundedness of the truncated iterates of the Beurling transform. Here we already make use of the results of the previous section to obtain a polynomial growth (quadratic, in fact) on the number of iterates. In Section \ref{secCommutator} we deduce the compactness of the commutator, which is quite straightforward after the Cruz and Tolsa results. 

The most technical step turns out to be the compactness of the Beurling reflection which spans through sections \ref{secReflection} and \ref{secMeyer}. This boils down to the estimate $\norm{\mathcal{R}_m f }_{\dot F^s_{p,q}(\Omega)} \lesssim_h \norm{f}_{\mathcal{C}^h(\Omega)}
$. The proof follows a discretization argument, expressing the norm as a sum on Whitney cubes, and then substituting the function by its mean on each cube. The difference gives rise to a local part and a nonlocal part which are treated differently. The local part is dealt with by using an explicit expression of the kernel of the operator, and then applying lemmata \ref{lemBoundForBeurling} and \ref{lemSummingTheBetas}. For the nonlocal part we need to use an equivalent expression for the kernel obtained in \cite{PratsQuasiconformal}. Again we will apply the same lemmata to estimate part of the terms arising from this expression. However, some other terms need {considerable extra work}. In particular, they are handled by using discretization techniques typical {in connection with} uniform domains, such as chains of Whitney cubes which play the role of Harnack chains, and the {use} of polynomial approximations, namely Meyer's approximating polynomials, which allow us to {apply} the Poincar\'e inequality iteratively. All this techniques are quite specific for these nonlocal terms, {whence} they are introduced in a separate section, namely Section \ref{secMeyer}, where the proof of the compactness of the Beurling reflection is {finally} completed.}

\section{Preliminaries}\label{secPreliminaries}
In this section we provide a handbook of notation and then we collect some results from the function spaces.

\subsection{Notation}
Throughout this paper we  write $C$ for constants which may change from one occurrence to the next. If we want to make clear in which parameters $C$ depends, we will add them as a subindex. In the same spirit, when comparing two quantities $x_1$ and $x_2$, we may write $x_1\lesssim x_2$ instead of $x_1\leq C x_2$, and $x_1\lesssim_{p_{1},\dots, p_{j}} x_2$ for $x_1\leq C_{p_{1},\dots, p_{j}} x_2$, meaning that the constant depends on all these parameters.

For $1\leq p\leq \infty$ we denote by $p'$  the  H\"older conjugate, that is $\frac{1}{p}+\frac{1}{p'}=1$.
Given $x\in \R^d$ and $r>0$, we write $B(x,r)$ or $B_r(x)$ for the open ball centered at $x$ with radius $r$ and $Q(x,r)$ for the open cube centered at $x$ with sides parallel to the axis and side-length $2r$. For any cube $Q$, we write $\ell(Q)$ for its side-length, and $rQ$ will stand for the cube with the same center but enlarged by a factor $r$. We will use the same notation for one dimensional  balls and cubes, that is, intervals. 

\begin{definition}\label{defLipschitzDomain}
Let $\delta,R>0$, $d\geq 2$. We say that a domain $\Omega\subset \R^d$ is a $(\delta,R)$-Lipschitz domain (or just a Lipschitz domain when the constants are not important) if for every point $z\in\partial\Omega$, there exists a cube $\mathcal{Q}=Q(0,R)$ and a Lipschitz function $A_z:\R^{d-1}\to\R$ supported in $[-4R,4R]^{d-1}$ such that $\norm{DA_z}_{L^\infty}\leq \delta$ and,  possibly after a translation that sends $z$ to the origin and a rotation, we have that
$$\mathcal{Q}\cap \Omega = \{(x,y)\in \mathcal{Q}: y>A_z(x)\}.$$

If $d=1$ we say that $\Omega\subset \R$ is a Lipschitz domain if $\Omega$ is an open interval.
\end{definition}

When dealing with line integrals in the complex plane\footnote{We identify $\R^2$ and $\C$ when appropriate.}, we will write $dz$ for the form $dx+i\,dy$ and analogously $d\bar{z}=dx-i\,dy$, where $z=x+i\,y$. When integrating a function with respect to  the Lebesgue measure of a complex variable $z$ we will always use $dm(z)$ to avoid confusion,  or simply $dm$. We adopt the traditional Wirtinger notation for derivatives, that is, given any $\phi\in C^\infty_c(\C)$, then 
$\partial \phi (z):=\frac{\partial \phi}{\partial z}(z)=\frac12(\partial_x\phi-i\,\partial_y\phi) (z)$
 and 
 $\bar \partial \phi (z):=\frac{\partial\phi}{\partial \bar z}(z)=\frac12(\partial_x\phi+i\,\partial_y\phi) (z).$

For any measurable set $A$ and any measurable function $f$,  the mean  over the set $A$ is denoted by $f_A= \fint_A f \, dm$.
We let $\N=\{1,2,\ldots\}$ be the set of natural numbers, and set  $\N_0=\N \cup \{0\}$.


The principal solution to \rf{eqBeltrami} can be found using the Beurling transform,  defined for $\varphi \in C^\infty
_c(\C)$ by
\begin{equation}\label{eqBeurling}
\Beurling \varphi (z) = -\lim_{\varepsilon \to 0} \frac{1}{\pi} \int_{|w-z|>\varepsilon} \frac{\varphi(w)}{(z-w)^2} \, dm(w),
\end{equation}
see \cite[Section 5]{AstalaIwaniecMartin}).
The Beurling transform  extends to a bounded operator in $L^p$ for every $1<p<\infty$ with $\norm{\Beurling}_{L^2\to L^2}=1$. Thus, $I-\mu\Beurling$ is invertible in $L^2$, and one obtains that $\bar\partial f:=(I-\mu\Beurling)^{-1}(\mu)$ is a well-defined compactly supported $L^2$ function, whenever $\mu \in L^{\infty}$ is compactly supported with $\| \mu \|_{\infty} < 1$. Finally, $f=z+ \Cauchy [(I-\mu\Beurling)^{-1}(\mu)](z)$, where the Cauchy transform $\Cauchy$ is defined by 
$$\Cauchy \varphi (z) =  \frac{1}{\pi} \int_{\C} \frac{\varphi(w)}{z-w} \, dm(w)$$
for every $\varphi \in \mathcal{S}$. The Cauchy transform of the compactly supported function  $(I-\mu\Beurling)^{-1}(\mu)$ is well defined even pointwise. For this and further information see e.g. \cite{AstalaIwaniecMartin}.

We will denote by $\T:=\partial \D$ the boundary of the  unit circle.

\subsection{Definitions of  function spaces}
We start by recalling   the homogeneous H\"older-Zygmund seminorm:
\begin{definition}
Given an open set $U\subset\R^d$, and $0<s<1$, we say that 
$f\in\dot{\mathcal{C}}^s(U)$ if
$$\norm{f}_{\dot{\mathcal{C}}^s(U)}:=\sup_{x, y\in U} \frac{|f(x)-f(y)|}{|x-y|^s}<\infty.$$
For $s=1$ we replace $|f(x)-f(y)|$ by $|f(x)-2f(\frac{x+y}{2}) +f(y)|$ and take the supremum over  those  $x,y\in U$ for which  also the midpoint $(x+y)/2$ lies in $U$.

 If   $s\in(k, k+1]$ where $k\in\N, $ we say that $f\in\dot{\mathcal{C}}^s(U)$ if
 $$\norm{f}_{\dot{\mathcal{C}}^s(U)}:=\norm{D^k f}_{\dot{\mathcal{C}}^{s-k}(U)}<\infty.$$
\end{definition}
One can define Banach spaces of functions modulo polynomials using the previous seminorms. However, the standard  non-homogeneous H\"older-Zygmund spaces are more suitable for our purposes:
\begin{definition}
For $0<s<\infty$, we say that  $f\in \mathcal{C}^{s}(U)$ if $f\in L^\infty \cap \dot{\mathcal{C}}^s(U)$. We define the norm
$$\norm{f}_{\mathcal{C}^{s}(U)}:=\norm{f}_{L^\infty(U)}+\norm{f}_{\dot{\mathcal{C}}^s(U)}.$$
\end{definition}
\noindent Note, in particular that  for Lipschitz domains  $C^1(U)\subset Lip(U)\subset \dot{\mathcal{C}}^1(U)$, where $Lip$ stands for the standard Lipschitz continuous functions. 
Moreover, if $s=k+\alpha$ with $k\in\N_0$ and $\alpha\in (0,1)$, then we have the coincidence
$\mathcal{C}^s(U)=C^{k,\alpha}(U)$ with the classical H\"older spaces.

The classical Sobolev spaces are defined analogously:
 \begin{definition}\label{defSobolevNonHomogeneous}
Given $s\in\N$ and $1\leq p \leq \infty$, we say that a locally integrable  function $f$ belongs to the space 
$ {W}^{s,p}(U)$ if 
$$
\norm{f}_{{W}^{s,p}(U)}:= \sum_{|\alpha|\leq s}\norm{D^\alpha f}_{L^p(U)}<\infty,
$$
where the derivatives are understood in the distributional sense. 
We say that $f\in W^{s,p}_{loc}(U)$ if $f_{|V}\in W^{s,p}(V)$ for every open set $V$ contained in a compact subset of $U$.
\end{definition}

For some function spaces  such intrinsic definitions are not always easily formulated. Thus, one introduces the following general definition:
\begin{definition}\label{defRestrictionNorm}
Let $X$ be a Banach space of complex-valued functions in $\R^d$. Given an open set $U\subset \R^d$, we say that a measurable function $f:U \to \C$ belongs to $X(U)$ if
$$\norm{f}_{X(U)}:=\inf_{F|_U\equiv f} \norm{F}_X<\infty.$$
\end{definition}
\noindent In regular situations, such as for the Lipschitz domains studied in the present paper,  intrinsic definitions will coincide with the above definition via  restrictions since one may construct suitable extension operators. For the classical Sobolev spaces this is done e.g. in  \cite{Jones}. We refer to  \cite{Shvartsman,KoskelaRajalaZhang} for  extension theorems on even worse domains. In any case, our function spaces (Sobolev, Triebel, Besov)  in subdomains of $\R^d$ are defined via  Definition \ref{defRestrictionNorm} unless otherwise stated. Furthermore, our main interest lies is $B^{s}_{p,p}$-domains with $s>1+1/p$ (see Definition \ref{defBesovDomain} below), and they are automatically Lipschitz domains.

 Test functions are included in the classical Sobolev spaces, and from the Leibniz' rule (see \cite[Section 5.2.3]{Evans}) the space $W^{s,p}(\R^d)$ is closed under multiplication by $C^\infty_c$ functions, i.e., for $\varphi \in C^\infty_c$  and $f\in W^{s,p}$,
\begin{equation*}
\norm{\varphi f}_{W^{s,p}}\leq C_{\varphi} \norm{f}_{W^{s,p}}.
\end{equation*}
The same property holds   true also for all Besov and Triebel spaces defined below.




\begin{definition}
Let $0<s<\infty$, and let $\Omega$ be a bounded planar domain. We say that $\Omega$ is a $\mathcal{C}^{1+s}$-domain if $\partial\Omega$ is the finite disjoint union of Jordan curves  $\{\Gamma_j\}_{j=1}^M$ and  
for each $j\in\{1,\ldots, M\}$ there exists a bi-Lipschitz parametrization $\gamma_j: \T \to \Gamma_j$
 with $\gamma_j \in \mathcal{C}^{1+s}(\T)$.
\end{definition}

\begin{remark}
It is not difficult to see that every $\mathcal{C}^{1+s}$-domain, or even a  $C^1$-domain,  is a Lipschitz domain in the sense of Definition \ref{defLipschitzDomain}. Indeed, the requirement of $\gamma \in C^1(\T)$ being bi-Lipschitz forbids the spiralling of the curve.
\end{remark}

To end this introduction we give the definition of Besov and Triebel-Lizorkin spaces. For a complete treatment we refer the reader to \cite{TriebelTheory}.

Consider a family $\{\psi_j\}_{j=0}^\infty\subset C^\infty_c(\R^d)$ satisfying that $\sum_{j=0}^\infty \psi_j\equiv 1$ with $\supp \,\psi_0 \subset B(0,2)$, $\supp \,\psi_j \subset B(0,2^{j+1})\setminus B(0,2^{j-1})$ for $j\geq 1$,	and such that for all  $\vec{i}\in\N_0^d$ there exists a constant $c_{\vec{i}}$ with 
$$\norm{D^{\vec{i}} \psi_j}_\infty \leq \frac{c_{\vec{i}}}{2^{j |\vec{i}| }} \mbox{\,\,\, for every } j\geq 0,$$
where we use the standard multi-index notation.
 \begin{definition}

Let $F$ denote the Fourier transform.
Let $s \in \R$,  $1< p< \infty$, $1\leq q\leq\infty$. 
For any tempered distribution $f\in \mathcal{S}'(\R^d)$ 
we define the non-homogeneous Triebel-Lizorkin norm
\begin{equation*}
\norm{f}_{F^s_{p,q}}=\norm{\left\{2^{sj}F^{-1}\psi_j F f\right\}}_{L^p(\ell^q)}=\norm{\norm{\left\{2^{sj}F^{-1}\psi_j F f(\cdot)\right\}}_{\ell^q}}_{L^p},
\end{equation*}
and call  $F^s_{p,q}:=F^s_{p,q}(\R^d) \subset \mathcal{S}'(\R^d)$  the set of tempered distributions for which this norm is finite.
\end{definition}
These norms are equivalent for different choices of $\{\psi_j\}_j$.  
\begin{definition}\label{defFamous}
Let $s>0$ and $1<p<\infty$. Then the Bessel potential space $W^{s,p}$ is defined as  $W^{s,p}:=W^{s,p}(\R^d):=F^s_{p,2}(\R^d)$. For the Besov space $B^{s}_{p,p}$ we use the definition $B^{s}_{p,p}:=F^{s}_{p,p}$. In case $p=\infty$, one sets $B^{s}_{\infty,\infty}:=\mathcal{C}^s$. 
\end{definition}
The above definition of Besov spaces agrees with the fact that the \emph{diagonal} spaces $F^s_{p,p}$ coincide with the Besov spaces with the same indices, see \cite{TriebelTheory}.
 The thorough reader will note the extreme futility of using  diagonal Besov spaces here instead of diagonal Triebel-Lizorkin spaces when they are exactly the same,  but we couldn't stand the burden of being unfaithful to the tradition of the field.
Finally, we recall that the Besov or Triebel spaces on subdomains are obtained via Definition \ref{defRestrictionNorm}.

By   Sobolev's  Theorem (see \cite[Section 5.6]{Evans}),  there is a continuous embedding
\begin{equation}\label{eqEmbeddingIntoDiniSOBO}
\norm{f}_{L^\infty}\leq C\norm{f}_{W^{s,p}} \mbox{\quad\quad whenever } sp>d,
\end{equation}
and the same holds true if $W^{s,p}$ is replaced by $B^s_{p,p}$ or $F^s_{p,q}$ ($q\in(1,\infty)$ may be arbitrary).

When $f$ is a complex valued function, we note that it belongs to a Triebel (resp. Besov) space if both real and imaginary parts of $f$ belong to the Triebel (resp. Besov) space in question. We also say that a function (or distribution)
$f$ defined on the domain $\Omega\subset\R^d$ belongs to $F^s_{p,q, loc}(\Omega)$ if
$\psi f\in F^{s}_{p,q}(\R^d)$ (continued as zero outside $\Omega$)  for all $\psi\in C_c^\infty(\Omega)$,
 and $B^s_{p,p,loc}(\Omega)$ is defined analogously. \quad

\subsection{Besov domains and auxiliary results on Besov spaces}\label{BesovDomains}

In order to obtain optimal results for (quasi-)conformal mappings with regard to the smoothness of the boundary, it is useful to notice that both for Sobolev and Triebel spaces defined on a smooth and bounded domain (or in the upper half space), the boundary values belong  exactly  to the corresponding diagonal Besov space, with smoothness decreased by $1/p$. Let us state this more precisely in the case particularly important for us, i.e. for the unit disc and the space $W^{s,p}(\D)$ with $s>1/p$ and $p\in (1,\infty)$. The trace $f_{|\T}$ (originally defined for $f\in C^\infty (\overline{D})$)  extends to a linear and bounded operator 
$$
 W^{s,p}(\D)\ni f \;\mapsto\; f_{|\T}\in  B^{s-1/p}_{p,p}(\T).
 $$
 Moreover, this result is the best possible one since the trace mapping is onto. Exactly the same result holds true if $W^{s,p}(\D)$ is replaced by $F^{s}_{p,q}(\D)$ for any $q\in (1,\infty)$. Above, the definition of $B^s_{p,p}(\T)$ on the torus can be done equivalently in various ways (any reasonable definition leads to the same space),  for example 
\begin{equation*}
 f\in B^s_{p,p}(\T) \quad \textrm{if and only if}\quad f(e^{i\cdot})\in  B^s_{p,p,loc}(\R)
 \end{equation*}
 and one may define the norm either by setting $\|  f\|_{B^s_{pp}(\T)}:= \|  \psi_0f(e^{i\cdot})\|_{B^s_{pp}(\R)},$ where $\psi_0\in C_0^\infty(\R)$ is a fixed test function that satisfies $\psi(x)=1$ for (say) $x\in [-1,6]$, or by the formulae  \eqref{eqBesovInnerNorm} and \eqref{eqExtensionDomainXBESO} below.
 
Assume next that  $f:\D\to \Omega$ is a conformal map, where $\Omega$ is a bounded Jordan domain, and we know that $f\in W^{s,p}(\D)$. It then follows that $\partial\Omega=f(\T)$, where $f_{|\T}\in B^{s-1/p}_{p,p}(\T)$. Hence,  in order to obtain optimal smoothness results 
in the context of interior Sobolev regularity $f\in W^{s,p}(\D)$,  the natural assumption  is that the boundary is parametrized by a $B^{s-1/p}_{p,p}$-function.  However, this condition alone
does not prevent possible cusps of the boundary, as  seen by considering the image of the unit disc under the conformal map $z\mapsto (z+1)^2$. The right condition to prevent this phenomenon is simply to assume that the derivative of the parametrization does not vanish. Hence we assume in all our results that the boundary of the bounded domain $\Omega$ admits a bi-Lipschitz parametrization, which is also Besov-regular. This will be soon formalized in Definition \ref{defBesovDomain} below. 

Before going to the definition of Besov-domains, we prepare ourselves with some observations on compositions of Besov functions that will be useful in verifying the independence of the definition of the used parametrization, and later on needed while proving some of our main results.
 
By  \eqref{eqEmbeddingIntoDiniSOBO} the inequality $s p>1$ grants 
$\norm{f}_{L^\infty}\leq \norm{f}_{B^{s}_{p,p}}$ for every $f\in B^{s}_{p,p}(\R)$.  In fact, 
we also have the following counterpart to the Sobolev embeddings in the setting of Besov spaces:
\begin{proposition}\label{theoTriebelEmbeddings}
Given $1\leq p_0\leq p_1\leq \infty$ and $-\infty < s_1\leq s_0<\infty$. Then for spaces in one dimension (like $B^s_{p,p}(\R),$ 
$B^s_{p,p}(\T)$, or $B^s_{p,p}(I)$ where $I\subset \R$ is an interval) we have
\begin{equation*}
B^{s_0}_{p_0,p_0}\subset B^{s_1}_{p_1,p_1}\mbox{\,\,\,\, if }s_0-\frac{1}{p_0}\geq s_1-\frac{1}{p_1}.
\end{equation*}
\end{proposition}
\noindent { This result can be  found e.g. in  \cite[Section { 2.7.1}]{TriebelTheory} in case of $B^s_{p,p}(\R).$ 
 In the reference the inclusion is shown for $s_0-\frac{1}{p_0}= s_1-\frac{1}{p_1}$, but
 the general case follows simply by noting that $B^{s}_{p,p}\subset B^{s'}_{p,p}$ if $s\geq s'.$
Moreover, one has $B^s_{\infty,\infty}=\mathcal{C}^s$ (see  \cite[Section { 2.5.7}]{TriebelTheory}) and hence if $s >1+1/p$, then taking $p_1=\infty$ in the above embedding leads to 
\begin{equation}\label{inclusion}  B^{s}_{p,p}\subset \mathcal{C}^{s-\frac1{p}}=\mathcal{C}^{1+\varepsilon},
\end{equation}
 which is useful to keep in mind when we define Besov domains shortly below.

When considering functions on  the torus or an interval, for $k\in \N_0$, $k < s < k+1$ one may also use the seminorm
\begin{equation}\label{eqBesovInnerNorm}
\norm{f}_{\dot B^s_{p,p}(\T)}:=\left(\int_0^{2\pi}\int_0^{2\pi} \frac{|f^{(k)}(e^{it}) - f^{(k)}(e^{i\tau})|^p}{|e^{it}-e^{i\tau}|^{(s-k) p +1}} dtd\tau\, \right)^\frac1p.
\end{equation}
By \cite[Theorem 4.4.2]{TriebelInterpolation} the norm $\norm{f}_{L^p(\T)}+\norm{f}_{\dot B^s_{p,p}(\T)} $ is equivalent to the restriction norm for the Besov spaces (see Definitions \ref{defRestrictionNorm} and  \ref{defFamous}) whenever $1\leq p\leq \infty$ (see \cite[Section 3.4.2]{TriebelTheory} for the endpoints). Moreover, by the lifting property \cite[Theorem 3.3.5]{TriebelTheory} it holds that
\begin{equation}\label{eqExtensionDomainXBESO}
\norm{f}_{B^{s+1}_{p,p}(\T)} \approx \norm{f}_{B^{s}_{p,p}(\T)}+\norm{f'}_{B^{s}_{p,p}(\T)} \approx \norm{f}_{L^p(\T)}+\norm{f'}_{B^{s}_{p,p}(\T)} .\end{equation}
  Both statements
    \eqref{eqBesovInnerNorm} and \eqref{eqExtensionDomainXBESO} remain valid  if the torus $\T$ is replaced 
   by a bounded interval $I$. 

We next consider composition properties of the Besov spaces.
\begin{lemma}\label{lemBesovAdmissibleSpace} Let $I_j\subset \R$ for $j\in\{1,2\}$  be bounded intervals and assume that $f:I_1\to I_2$ is  a bi-Lipschitz homeomorphism.

\noindent (i)\quad Assume that $s\in (0,1)$ and $p\in (1,\infty)$. Then the composition $g\mapsto g\circ f$ defines a linear isomorphism between the spaces $B^s_{p,p}(I_2)$ and $B^s_{p,p}(I_1)$.

\smallskip

\noindent (ii)\quad Let $1<p <\infty$ and $s>1+1/p$. Then 
\begin{equation}\label{eqInvarianceUnderInversionBESO}
 f \in B^{s}_{p,p}(I_1) \implies f^{-1} \in B^{s}_{p,p}(I_2),
\end{equation}
and 
\begin{equation}\label{eqInvarianceUnderbiLipschitzBESO}
 f,g\in B^{s}_{p,p}(I_2) \implies g \circ f \in B^{s}_{p,p}(I_1).
\end{equation}



\smallskip

\noindent (iii) \quad  Both statements (i) and (ii) remain valid if  
 $I_1$ and $I_2$ are replaced by $\T$.

\end{lemma}

 \begin{proof}  The first statement (i) is well-known and follows almost immediately from \eqref{eqBesovInnerNorm} by a change of variables. Also the last statement (iii) follows readily 
 after (i) and (ii) are established.
Concerning  (ii)  we note that the second result  \eqref{eqInvarianceUnderbiLipschitzBESO} is contained in \cite[Theorem 1]{BourdaudMoussaiSickelTL}, since in the compactly supported case we may ignore the condition $f(0)=0$. In addition, we prove \eqref{eqInvarianceUnderInversionBESO} only for  $s\in \N$, as this case is not covered by Theorem \ref{lemTriebelAdmissibleBanach}  below and \cite[Lemma 2.2]{PratsTLComposition}. 
We may assume without loss of generality that $I_1=I_2=\T$ and that $f$ is a bi-Lipschitz homeomorphism of the unit circle. Denote by $F$ (resp. $G$) the Poisson extension of $f$ (resp. $g$) to the unit disc. Then by the classical Rado-Kneser-Choquet theorem (see \cite[Section 3.1]{Duren}) $F$ is a  homeomorphism of the unit disc. 
 Moreover, the extensions $F,G$ belong to $F^{s+\frac1p}_{p,2}(\D)$ by \cite[Theorem 4.3.3]{TriebelTheory}.

Since $s+\frac1p\notin \N$, we can apply \cite{PratsTLComposition} to the extensions as soon as $F$ is bi-Lipschitz. Here note that $(s-1)p>1$ implies that $B^s_{p,p}(\T)\subset C^{1+\varepsilon}(\T)$, in particular  the Hilbert transform $H(f')\in L^\infty$. We can hence  use Pavlovic's theorem \cite{Pavlovic} which says that a harmonic homeomorphism of the disk onto itself is bi-Lipschitz if and only if its boundary function is bi-Lipschitz and the Hilbert transform of the derivative of the boundary function is in $L^\infty$. In particular, our $F$ is a bi-Lipschitz map of  $\D$ and therefore, by \cite{PratsTLComposition} we see that $F^{-1}\in F^{s+\frac1p}_{p,2}(\D)$ and $G\circ F\in F^{s+\frac1p}_{p,2}(\D)$. 
 Finally, taking the traces we see that $g\circ f=(G \circ F)|_{\T}\in B^s_{p,p}(\T)$ and $f^{-1}= (F)^{-1}|_{\T}\in B^s_{p,p}(\T)$, proving \rf{eqInvarianceUnderbiLipschitzBESO} and \rf{eqInvarianceUnderInversionBESO}.  

 \end{proof}
 
We record an observation of compositions of Besov-functions, which will play a crucial role  later on in the proof of  Theorem \ref{theoSobolevRiemann}.
 
 \begin{lemma}\label{le:tasma}
 Let $1<p<\infty$ together with  $s_1>1+1/p$ and $s_2\in (0,s_1]$. Assume that $f\in B^{s_1}_{p,p}(\T)$ is bi-Lipschitz. Then for all functions $g\in B^{s_2}_{p,p}(\T)$ we
 have  
 $$
 g\circ f\in B^{s_2}_{p,p}(\T).
 $$
 \end{lemma}
 \begin{proof}  For $s_2\in (0,1)$ or $s_2=s_1$ the claim follows from Lemma
 \ref{lemBesovAdmissibleSpace} (i) or (ii), respectively. This entails that the linear operator
 $$
T: g\mapsto g\circ f
 $$ 
is bounded $T:B^{s_2}_{p,p}\to B^{s_2}_{p,p}$ for $s_2\in (0,1)$ and $s_2=s_1$, whence the boundedness follows for all $s_2\in (0,s_1]$ by complex interpolation, see \cite[Section 2.4.7]{TriebelTheory}
\end{proof}

\begin{lemma}\label{le:peetre} Assume that  $1<p<\infty$ and $s>1/p$.

\smallskip

\noindent (i)\quad Assume that  $f\in B^{s}_{p,p}(\T)$ is real-valued. Then $\psi(f)\in B^{s}_{p,p}(\T)$ for all $\psi\in C^\infty (\R)$.
 
 \smallskip
 
 \noindent (ii)\quad Assume that $f=u+iv\in B^{s}_{p,p}(\T)$.  Then $e^f\in  B^{s}_{p,p}(\T)$. Moreover, if $f(w)\not=0$ for $w\in \T$, then any local branch of $\arg(f)$  belongs to  $B^{s}_{p,p, loc}(\T)$  as well. 
\end{lemma}

\begin{proof}
{Statement (i)  is due to Peetre - note we do not need the condition $\psi(0)=0$ in view of  compactness of $\T$}. We refer the reader to \cite[Theorem 5.4.1]{Runst} for this and more general statements of the same type.   Towards (ii) we note first that $e^f=e^ue^{iv}$, and the space $B^s_{p,p}(\Omega)$ is an algebra (see e.g. Lemma \ref{lemAlgebra} below), so it is enough to prove the claim for both factors separately, and the claim then follows from part (i). Finally, we may write locally $\arg{f}=\arctan (v/u) +c $ or $c-\arctan (u/v)$, and again the statement follows by part (i) and the algebra property of the Besov space as soon as one localizes $x\mapsto 1/x$ suitably to a globally smooth function.
\end{proof}

We are finally ready to define (bounded) Besov-domains in the plane:

\begin{definition}\label{defBesovDomain}
Let   $1\leq p <\infty$ and $s>1+1/p$, and assume  that $\Omega$ is a  bounded  and finitely connected domain. 
We say that  $\Omega$ is a \emph{$B^{s}_{p,p}$-domain} if $\partial\Omega$ is a finite collection of disjoint Jordan curves $\{\Gamma_j\}_{j=1}^M$ and each boundary component $\Gamma_j$ has a bi-Lipschitz parameterization $\gamma^j \in B^{s}_{p,p}(\T)$.
 \end{definition}
 
 Note that by the  definition above, every $B^{s}_{p,p}$-domain $\Omega$ is automatically a Lipschitz domain, and thus in particular, $\partial \Omega$ has no cusps and allows no spiralling. 
 
 In a similar way we can define Besov spaces  on the boundaries of Besov-domains.
\begin{definition}\label{defTraceSpace}
Assume that   $1\leq p<\infty$ and $s>1+1/p$. Let $\Gamma$ be the boundary of a   simply connected $B^{s}_{p,p}$-domain,  with a bi-Lipschitz parameterization $\gamma: \T \to \Gamma$ such that  $\gamma \in B^{s}_{p,p}(\T)$. We say that a measurable function $f:\Gamma \to \C$ belongs to $B^{s}_{p,p}(\Gamma)$ if 
$$\norm{f}_{B^{s}_{p,p}(\Gamma)}:= \norm{f\circ \gamma}_{B^{s}_{p,p}(\T)} < \infty.$$
This definition extends naturally to the boundaries of finitely connected Besov-domains. \end{definition}

It is important to note that the above Definition 
does not depend on the particular choice of the parametrization. We state this fact as a separate lemma.

\begin{lemma}\label{le:independence} Assume that  $s>1+1/p$.   For a $B^{s}_{p,p}$-domain  the arc-length parametrization (actually, a suitable multiple of it) yields an admissible $B^{s}_{p,p}$-parametrization.  Moreover, in Definition \ref{defTraceSpace} different parametrizations lead to  equivalent  norms for  $B^{s}_{p,p}(\Gamma)$. In particular, for two different admissible parametrizations $w_1$ and $w_2$ one has
 \begin{equation}\label{eq:vaihto}
w_2^{-1}\circ w_1\in B^{s}_{p,p} (\T).
\end{equation}
\end{lemma}

\begin{proof}
 In order to prove the first statement, we may assume that $M=1$ in Definition \ref{defBesovDomain} and we denote $\gamma=(\gamma_1,\gamma_2):=\gamma^1$. It is naturally enough to prove that the arc-length parametrization of $\Gamma$ is in $B^{s}_{p,p}(\T)$, since this then also proves the stated independence of the parametrization. To simplify notation, since we only deal with local properties we may assume that the parametrizations are defined as periodic functions on  the real axis. Thus  the components $\gamma_1,\gamma_2$ are in (real-valued) $B^s_{p,p}(\R)$, and by the bi-Lipschitz property we have  $0<c\leq  |\gamma'(t)|\leq C$ for all $t$. Let us denote by $\ell(t)$ the length of the curve over the parameter interval $[0,t]$:
 $$
 \ell(t):=\int_0^t|\gamma'(u)|du= \int_0^t\big((\gamma_1'(u))^2+(\gamma_2'(u))^2\big)^{1/2}du.
 $$
 By the algebra property of the space $B^{s-1}_{p,p}(\R)$ we see that $|\gamma'|^2\in B^{s-1}_{p,p}(\R)$, and then the same is true for $u\mapsto |\gamma'(u)|$, as we may continue $x^{1/2}$ on  $[c,C]$ to a compactly supported $C^\infty$-function,  and the Besov-property is preserved locally under composition with smooth functions. The latter fact  is true also for Triebel spaces, and {is attributed to Peetre} \cite{Peetre}, we {again} refer to \cite{Runst} for a more extensive discussion of this kind of results. In any case, we obtain that $\ell$ is bi-Lipschitz and  $\ell\in B^{s}_{p,p,loc}(\R)$. Hence,  by writing $S:= \int_0^{2\pi}|\gamma'(u)|du={\mathcal H}^1(\partial \Omega)$, Lemma \ref{lemBesovAdmissibleSpace} verifies that  (a multiple of) the arclength parametrization $
 \gamma\circ \ell^{-1}(2\pi \cdot /S)$ indeed yields a $B^{s}_{p,p}(\R)$-parametrization. 
 
 In order to prove \eqref{eq:vaihto} we write $w_j(t)=u_j(t)+iv_j(t)$ , $j=1,2$.  Given an arbitrary parameter $t_1$, we have  $u'_1(t_1)\not=0$ or  $v'_1(t_1)\not=0$, and may assume by symmetry the first alternative. Considering the tangent line of $\partial \Omega$ at $w_1(t_1)$ we see that also $u'_2(t_2)\not=0$, where $w_2(t_2)=w_1(t_1)$. Especially, the functions $u_j$  are locally invertible in a neighborhood of $t_j$ ($j=1,2$), and hence in a neighborhood of $t_1$ we have
 $$
 w_2^{-1}\circ w_1= u_2^{-1}\circ u_1,
 $$
 which proves \eqref{eq:vaihto} in view of Lemma \ref{lemBesovAdmissibleSpace}.

  Finally, the second statement  concerning Definition \ref{defTraceSpace} follows immediately from  Lemma \ref{lemBesovAdmissibleSpace} and \eqref{eq:vaihto}.
 \end{proof}

{According to what we have  just proved,} in the parametrization of the boundary of a component of a Besov-domain we may assume without loss of generality that the parametrization  of $\gamma$ is  a multiple of the arc-length. This means that $\gamma_j' (z) = c \nu(\gamma(z))^{\bot}$ for $z\in\T$, where $\nu$  is the unit normal vector on $\Gamma$. Thus we see that the domains appearing in \cite{PratsPlanarDomains} are exactly the ones in Definition \ref{defBesovDomain}:
\begin{corollary}
Let  $1 \leq p \leq \infty$  and with  $s>1+1/p$. A bounded Lipschitz domain $\Omega$ is a ${B}^{s}_{p,p}$-domain if and only if given its outward unit normal vector
$\nu$ and  (scaled) arc-length parameterization $\gamma$ of any component of its boundary, we have that $\nu\circ \gamma \in {B}^{s-1}_{p,p}(\T)$.  
\end{corollary}


\subsection{Properties of Triebel-Lizorkin spaces}


Let us fix the following notation: given a domain $\Omega\subset\R^d$ and $x\in \R^d$, set
$$\delta_\Omega(x):=\dist(x,\partial\Omega).$$ 
In Section \ref{secPrincipal}  and the proof of Theorem \ref{theoInvertBeltrami} we will use the following characterization for the Triebel-Lizorkin space on $\Omega$:
\begin{theorem}[see {\cite[Theorem 1.2, Corollary 1.4]{PratsTL}} and {\cite[Corollary 2]{Seeger}}]\label{theoNormOmegapgtrq}
Let $\Omega\subset \R^d$ be a bounded Lipschitz domain, let  $0<\sigma< 1$, $k\in\N_0$, $1\leq  p <\infty$, $1\leq q\leq \infty$  with $\sigma>\frac dp-\frac dq$, and $0<\rho<1$. 
For $s=k+\sigma$ we write
\begin{equation*}
\norm{f}_{F^s_{p,q}(\Omega)}:=\norm{f}_{W^{k,p}(\Omega)} + \sum_{|\alpha|=k} \left(\int_\Omega \left(\int_{B\left(x,\rho \delta_\Omega(x)\right)}\frac{|D^\alpha f(x)-D^\alpha f(y)|^q}{|x-y|^{\sigma q+d}} \,dy\right)^{\frac{p}{q}}dx\right)^{\frac{1}{p}}<\infty,
\end{equation*}
with the usual modification for $q=\infty$. Then, the norm defined above is equivalent to the restriction norm  from Definition \ref{defRestrictionNorm}.
\end{theorem}
%

The previous result is based on an extension operator fit to the intrinsic norms defined above. 
\begin{theorem}[see {\cite[Theorem 1.5]{PratsTL}}]\label{theoExtension}
Let $\Omega$ be a Lipschitz domain and $k\in \N_0$. There exists a linear operator $\Lambda_k: W^{k,1}_{\rm loc}(\Omega) \to L^1_{\rm loc}(\R^d)$ such that for every $1\leq p <\infty $, $1\leq q\leq \infty$ and $0<\sigma<1$ with $\sigma>\frac dp-\frac dq$, then 
$$\Lambda_k:F^s_{p,q}(\Omega)\to F^s_{p,q} (\R^d)$$
(with $s=\sigma+k$) is a bounded extension operator.

\end{theorem}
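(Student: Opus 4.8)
The plan is to build $\Lambda_k$ as a Whitney--Jones type extension operator, once and for all, and then check the two mapping properties separately. First I would fix a Whitney covering $\mathcal{W}$ of $\Omega$ and a Whitney covering $\mathcal{W}^*$ of $\R^d\setminus\overline\Omega$, together with a partition of unity $\{\varphi_Q\}_{Q\in\mathcal{W}^*}\subset C^\infty_c(\R^d)$ subordinate to $\{2Q\}_{Q\in\mathcal{W}^*}$ with $\sum_{Q\in\mathcal{W}^*}\varphi_Q\equiv 1$ on $\R^d\setminus\overline\Omega$ and $\norm{D^\beta\varphi_Q}_{L^\infty}\lesssim\ell(Q)^{-|\beta|}$. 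Since $\Omega$ is uniform, to each $Q\in\mathcal{W}^*$ I can associate a reflected cube $Q^\flat\in\mathcal{W}$ with $\ell(Q^\flat)\approx\ell(Q)\approx\dist(Q,Q^\flat)$, joined to $Q$ by a chain of Whitney cubes obeying \rf{eqChainDistances}--\rf{eqChainNumberOfCubesUniformlyBounded}. Letting $P_Q$ denote the degree-$k$ averaged Taylor polynomial of $f$ over $Q^\flat$, I define $\Lambda_k f:=f$ on $\Omega$ and $\Lambda_k f:=\sum_{Q\in\mathcal{W}^*}\varphi_Q P_Q$ on $\R^d\setminus\overline\Omega$; when $\Omega$ is bounded I also multiply by a fixed cutoff in $C^\infty_c(\R^d)$ equal to $1$ near $\overline\Omega$, which is harmless for every norm below by pointwise multiplier properties of the type \rf{eqMultiplySmoothInSOBO}. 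This formula makes sense whenever $f$ and its distributional derivatives up to order $k$ are locally integrable on $\Omega$, hence for $f$ in $W^{k+1,\infty}(\Omega)$ and in every $F^s_{p,q}(\Omega)$ appearing in the statement, and it is clearly linear in $f$.

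For the bound into $W^{k+1,\infty}(\R^d)$, fix $Q\in\mathcal{W}^*$ and $x\in Q$. Using $\sum_{Q'}D^\beta\varphi_{Q'}\equiv 0$ for $|\beta|\ge 1$, one writes $D^\alpha(\Lambda_k f)(x)$ for $|\alpha|\le k+1$ as a finite sum, over the boundedly many neighbours $Q'$ of $Q$, of terms $D^\beta\varphi_{Q'}(x)\,D^{\alpha-\beta}(P_{Q'}-P_Q)(x)$; the first factor is $\lesssim\ell(Q)^{-|\beta|}$ and the second is $\lesssim\ell(Q)^{k+1-|\alpha-\beta|}\norm{\nabla^{k+1}f}_{L^\infty(\Omega)}$, by comparing the averaged Taylor polynomials of $f$ over $Q^\flat$ and ${Q'}^\flat$ along their chain. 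This gives $\norm{\Lambda_k f}_{W^{k+1,\infty}(\R^d)}\lesssim\norm{f}_{W^{k+1,\infty}(\Omega)}$, the estimate inside $\Omega$ being trivial. It then remains to see that $\Lambda_k f\in W^{k+1,\infty}(\R^d)$ genuinely, i.e.\ that the derivatives up to order $k$ match continuously across $\partial\Omega$; this is the classical Whitney matching for uniform domains and follows from $|D^\alpha(\Lambda_k f-f)(x)|\lesssim\ell(Q)^{k+1-|\alpha|}\norm{\nabla^{k+1}f}_{L^\infty(\Omega)}$ for $x\in Q$, $|\alpha|\le k$, again by an averaged-Taylor comparison, this time against the Taylor polynomial of $f$ at a boundary point accessible from $Q$.

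For the bound $\Lambda_k:F^s_{p,q}(\Omega)\to F^s_{p,q}(\R^d)$ with $s=k+\sigma$, I would use the classical intrinsic (second-difference) characterization of $F^s_{p,q}(\R^d)$, so that it suffices to estimate $\norm{\Lambda_k f}_{W^{k,p}(\R^d)}$ and, for $|\alpha|=k$, a weighted double integral of $|D^\alpha\Lambda_k f(x)-D^\alpha\Lambda_k f(y)|^q$. Splitting according to the positions of $x$ and $y$: when both lie in the bulk of $\Omega$, $\Lambda_k f=f$ and one invokes directly Theorem \ref{theoNormOmegapgtrq} (if $p\ge q$) or Theorem \ref{theoNormOmegapsmallerq} (if $p<q$); the new contribution is when $x$, and possibly $y$, is in $\R^d\setminus\overline\Omega$ near $\partial\Omega$. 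There, again using $\sum_{Q'}D^\beta\varphi_{Q'}\equiv 0$, $D^\alpha\Lambda_k f(x)$ is a finite combination of polynomial differences $D^{\alpha-\beta}(P_{Q'}-P_{Q''})(x)$, and the chain argument of \cite{PratsTolsa,PratsSaksman} bounds each such difference by a sum over the Whitney cubes $R$ in the relevant chain of the local oscillations $\norm{D^\alpha f-(D^\alpha f)_R}_{L^\infty(R)}$ weighted by the powers of $\ell(R)$ dictated by \rf{eqChainDistances}. Inserting this into the kernel $|x-y|^{-\sigma q-d}$, summing over chains using Lemma \ref{lemmaximal} (in the forms \rf{eqMaximalFar} and \rf{eqMaximalClose}) together with \rf{eqChainLength} and \rf{eqChainNumberOfCubesUniformlyBounded}, and finally exchanging the $\ell^q$-sum over chains with the $L^p$-integration --- by Minkowski's inequality when $p\ge q$ and by passing to the shadow functional of Theorem \ref{theoNormOmegapsmallerq} when $p<q$ --- reduces everything to the intrinsic $F^s_{p,q}(\Omega)$ seminorm of $f$. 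This last exchange of norms, in the regime $p<q$, is exactly what forces the hypothesis $\sigma>\frac dp-\frac dq$.

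The main obstacle is this near-boundary Triebel-Lizorkin estimate. The geometry of the (possibly unbounded, possibly fractal-boundary) uniform domain enters only through the chain condition and the maximal bounds of Lemma \ref{lemmaximal}, but one must track carefully the powers of $\ell(Q)$ gained at each differentiation of the partition of unity and lost again in the polynomial comparisons, and then run the $L^p(\ell^q)$ bookkeeping so that the telescoping over chains converges uniformly --- which is precisely where the constraint $\sigma>d/p-d/q$ and the use of the shadow $\Sh(x)$ become indispensable. Everything else --- the construction, the $W^{k+1,\infty}$ estimate, and the bulk part of the Triebel-Lizorkin estimate --- is routine Whitney--Jones extension theory once that piece is in place.
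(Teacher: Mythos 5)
Note first that this paper does not prove the statement: Theorem \ref{theoExtension} is quoted verbatim from \cite[Theorem 1.4]{PratsSaksman}, so there is no internal proof to compare against. Your construction --- Whitney coverings of $\Omega$ and of its complement, a subordinate partition of unity, reflected cubes $Q^\flat$ supplied by uniformity, degree-$k$ averaged Taylor (Meyers-type) polynomials $P_Q$ taken on $Q^\flat$, a cutoff for bounded $\Omega$, and chain/shadow bookkeeping in which the restriction $\sigma>\frac dp-\frac dq$ is only needed when $p<q$ --- is essentially the construction and strategy of that reference, and your $W^{k+1,\infty}$ estimate (cancellation $\sum_{Q'}D^\beta\varphi_{Q'}\equiv 0$ for $\beta\neq 0$, vanishing of $D^\alpha P_{Q'}$ for $|\alpha|=k+1$, Whitney matching across $\partial\Omega$, which has measure zero for uniform domains) is sound.

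Two caveats on the Triebel--Lizorkin part. First, you bound the polynomial differences along a chain by $\norm{D^\alpha f-(D^\alpha f)_R}_{L^\infty(R)}$; this is not what the averaged-Taylor comparison yields, and it is not controlled by $\norm{f}_{F^s_{p,q}(\Omega)}$ in general, since for $\sigma p\le d$ the derivatives $D^\alpha f$ with $|\alpha|=k$ need not be bounded. The correct quantity is the mean oscillation $\fint_{R}|D^\alpha f-(D^\alpha f)_R|\,dm$ (or an $L^p$-averaged oscillation), which is exactly what the chain arguments of \cite{PratsTolsa,PratsSaksman} and the maximal bounds \rf{eqMaximalFar}--\rf{eqMaximalClose} are built to sum; the fix is routine but necessary. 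Second, the step you yourself single out as the main obstacle --- the $L^p(\ell^q)$ bookkeeping over chains, including the passage to the shadow functional of Theorem \ref{theoNormOmegapsmallerq} in the regime $p<q$ --- is only described, not carried out, and it is precisely the substance of the cited theorem. As written, then, your text is a faithful plan that mirrors the known proof rather than a self-contained argument.
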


In fact, e.g. by  using universal extension operators (see [Rychkov]), the embeddings described in Proposition \ref{theoTriebelEmbeddings} have the following counterpart in the present setting:
\begin{proposition}[See {\cite[Sections 2.3 and 2.7]{TriebelTheory}}]\label{theoTriebelEmbeddingsOmega}
The following properties hold whenever $\Omega$ is a bounded Lipschitz domain:

\smallskip

\noindent(i)\quad Let $1 \leq q_0 < q_1\leq \infty$ and $1\leq p <  \infty$ and $s\in\R$. Then 
$$F^{s}_{p,q_0}(\Omega)\subset F^{s}_{p,q_1}(\Omega).$$

\smallskip

\noindent(ii)\quad 
 Let $1\leq q_0, q_1\leq \infty$ and $1\leq p<  \infty$, $s\in\R$ and $\varepsilon>0$. Then 
$$F^{s+\varepsilon}_{p,q_0}(\Omega)\subset F^{s}_{p,q_1}(\Omega).$$

\smallskip

\noindent(iii)\quad  Given $\varepsilon>0$, $1\leq p_0< p_1< \infty$, $1\leq q_0,q_1\leq \infty$ and $-\infty < s_1 < s_0<\infty$ with $s_0\in\N$ and $s_0-\frac{d}{p_0}=s_1-\frac{d}{p_1}$, then
$$F^{s_0}_{p_0,q_0}(\Omega)\subset F^{s_1}_{p_1,q_1}(\Omega).$$
\end{proposition}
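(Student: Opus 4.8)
The plan is to derive all three inclusions from their classical analogues on $\R^d$, obtained by sandwiching a whole-space embedding between the extension operator and the (cost-free) restriction operator. Write any of the three statements abstractly as $F^{\sigma_0}_{p_0,q_0}(\Omega)\subset F^{\sigma_1}_{p_1,q_1}(\Omega)$, where $F^{\sigma_0}_{p_0,q_0}$ is the \emph{smaller} of the two spaces. Two ingredients come essentially for free. First, for every admissible triple $(\sigma,p,q)$ the restriction $F\mapsto F|_\Omega$ is bounded from $F^\sigma_{p,q}(\R^d)$ to $F^\sigma_{p,q}(\Omega)$ with norm at most $1$: this is immediate from Definition \ref{defRestrictionNorm}, in which $\norm{f}_{F^\sigma_{p,q}(\Omega)}$ is by definition the infimum of $\norm{F}_{F^\sigma_{p,q}(\R^d)}$ over all extensions $F$ of $f$. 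Second, the proposition assumes that $\Omega$ carries a bounded extension operator $E\colon F^{\sigma_0}_{p_0,q_0}(\Omega)\to F^{\sigma_0}_{p_0,q_0}(\R^d)$ for the smaller space; for uniform domains and non-integer indices in the appropriate range this is precisely Theorem \ref{theoExtension}, and for the integer-order space in (3) one uses a classical extension operator for uniform domains, but the only thing needed is the mere existence of such an $E$, which is granted by hypothesis.

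Granting these, the argument is a three-step composition: given $f\in F^{\sigma_0}_{p_0,q_0}(\Omega)$, extend to $Ef\in F^{\sigma_0}_{p_0,q_0}(\R^d)$, apply the whole-space embedding $F^{\sigma_0}_{p_0,q_0}(\R^d)\subset F^{\sigma_1}_{p_1,q_1}(\R^d)$, and restrict back, so that
\[
\norm{f}_{F^{\sigma_1}_{p_1,q_1}(\Omega)}\le\norm{Ef}_{F^{\sigma_1}_{p_1,q_1}(\R^d)}\lesssim\norm{Ef}_{F^{\sigma_0}_{p_0,q_0}(\R^d)}\lesssim\norm{f}_{F^{\sigma_0}_{p_0,q_0}(\Omega)}.
\]
It therefore remains only to invoke the three embeddings on $\R^d$, all of which are classical and recorded in \cite[Sections 2.3, 2.7]{TriebelTheory}. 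For (1), $F^s_{p,q_0}(\R^d)\subset F^s_{p,q_1}(\R^d)$ with $q_0<q_1$ is the pointwise monotonicity $\norm{\cdot}_{\ell^{q_1}}\le\norm{\cdot}_{\ell^{q_0}}$ applied under the $L^p$-norm of the Littlewood--Paley square function. For (2), $F^{s+\varepsilon}_{p,q_0}(\R^d)\subset F^s_{p,q_1}(\R^d)$ follows from the chain $F^{s+\varepsilon}_{p,q_0}\subset F^{s+\varepsilon}_{p,\infty}\subset F^s_{p,1}\subset F^s_{p,q_1}$, the middle inclusion amounting to summing $\sum_j 2^{-\varepsilon j}$ over the dyadic blocks, so that a fixed excess of smoothness absorbs any change of the fine index. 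For (3), $F^{s_0}_{p_0,q_0}(\R^d)\subset F^{s_1}_{p_1,q_1}(\R^d)$ along the Sobolev line $s_0-\tfrac{d}{p_0}=s_1-\tfrac{d}{p_1}$, $p_0<p_1$, is the Sobolev-type (Jawerth--Franke) embedding for Triebel--Lizorkin spaces; when a small amount of smoothness is to spare one may combine it with (1)--(2) to reach every fine index $q_1$. The convention $F^s_{\infty,\infty}=\mathcal{C}^s$ takes care of the $p=\infty$ endpoints appearing in (1)--(2).

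I do not expect a genuine obstacle here: the whole content is that ``extension, then the known $\R^d$-embedding, then restriction'' composes to give the claim, and the remaining work is bookkeeping — checking that the left-hand (smaller) space in each of (1)--(3) lies in the regime where an extension operator for $\Omega$ is available (which is in any case part of the hypothesis), and matching the admissible $(p,q)$-ranges with those of the cited results in \cite{TriebelTheory}.
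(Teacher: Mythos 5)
Your proposal is correct and coincides with the paper's own argument: the paper proves this proposition precisely by noting that, via a bounded extension operator for the smaller space, the classical whole-space embeddings of \cite[Sections 2.3, 2.7]{TriebelTheory} transfer to $\Omega$, the restriction being cost-free by Definition \ref{defRestrictionNorm}. Your extend--embed--restrict composition and the identification of the three $\R^d$ embeddings (monotonicity in $q$, trading $\varepsilon$ of smoothness for the fine index, and the Jawerth--Franke Sobolev-type embedding) is exactly this argument.
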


We will also need a {minor} extension to Lemma \ref{le:peetre}.

\begin{lemma}\label{le:runst} Assume that  $1<p<\infty$ and $s>1/p$, and $q\in (1,\infty)$. Let $B\subset\C$ be a disc and assume that
$f\in F^{s}_{p,q}(B)$ satisfies $|f(z)|\geq c>0$ on $B$.    Then $1/f\in F^{s}_{p,q}(B)$.
\end{lemma} 
\begin{proof}
By writing $f=u+iv$ and noting that $1/f=(u^2+v^2)^{-1}(u-iv),$ the claim follows from the algebra property of Triebel spaces in  the above range of $s$, $p$ and $q$, and the  fact that again in this range, the Triebel spaces on $\R^2$ are invariant under compositions $g\mapsto\psi(g)$, where $\psi\in C^\infty(\R)$ satisfies $\psi(0)=0$, see \cite[Theorem 5.4.1]{Runst}. The claim then follows by choosing $\psi$ which is a suitable localization of $1/x$.
\end{proof}


Here again we have the lifting property, stability under composition and an inverse function theorem, which are essential in the proof of Theorem \ref{theoTriebelStabilityOfDomains} based on  Stoilow factorization.
\begin{theorem}[see {\cite[Theorem 1.1]{PratsTLComposition}}]\label{lemTriebelAdmissibleBanach}
Let $ s \in \R_+ \setminus \N$, $1\leq p<\infty$, $1\leq q \leq\infty$  and  $d\in\N$. 
Then the following holds:

\smallskip

\noindent(i)\quad 
 Given a bounded Lipschitz domain $\Omega\subset \R^d$, every function $f \in F^{s+1}_{p,q}(\Omega)$ satisfies that
\begin{equation}\label{eqExtensionDomainXTRIEBEL}
\norm{f}_{F^{s+1}_{p,q}(\Omega)} \approx \norm{f}_{F^{s}_{p,q}(\Omega)} +\norm{\nabla f}_{F^{s}_{p,q}(\Omega)}.\end{equation}

\smallskip

\noindent(ii)\quad 
 If $s >1+d/p$, given bounded Lipschitz domains $\Omega_j\subset \R^d$ and functions $f_j\in F^{s}_{p,q}(\Omega_j)$ with $f_1(\Omega_1)\subset \Omega_2$ and $f_1$ bi-Lipzchitz, then 
\begin{equation*}
f_2 \circ f_1 \in F^{s}_{p,q}(\Omega_1)
\end{equation*}
and  if $f_1(\Omega_1)=\Omega_2$, then
\begin{equation*}
f_1^{-1} \in F^{s}_{p,q}(\Omega_2).
\end{equation*}
 \end{theorem}
%

In the case of Sobolev spaces the corresponding result is true also for integer values of the smoothness:
\begin{theorem}[see {\cite[Lemma 2.10]{PratsTLComposition}}]\label{lemSobolevAdmissibleSpace}
Let $s>1+d/p$ and $1< p < \infty$. Given bounded Lipschitz domains $\Omega_j\subset \R^d$ and functions $f_j\in W^{s,p}(\Omega_j)$  with $f_1(\Omega_1)\subset \Omega_2$ and $f_1$ bi-Lipschitz, then 
\begin{align}\label{eqInvarianceUnderbiLipschitzSOBO}
f_2 \circ f_1 & \in W^{s,p}(\Omega_1),
\end{align}
and if $f_1(\Omega_1)=\Omega_2$, then
\begin{equation}\label{eqInvarianceUnderInversionSOBO}
f_1^{-1} \in W^{s,p}(\Omega_2).
\end{equation}
\end{theorem}

 \begin{remark} \label{blablabla} {\rm Somewhat surprising, it is not clear if Theorem \ref{lemTriebelAdmissibleBanach} holds true for the integer smoothness $s \in \N$. The "technical" reason for this is that in the characterization of Triebel-Lizorkin spaces in Theorem \ref{theoNormOmegapgtrq},  when $s$ is an integer the differences of derivatives 
$D^\alpha f$ need to be replaced by double differences, see \cite{PratsTLComposition}. 
As pointed out in Theorem \ref{lemSobolevAdmissibleSpace}, 
this problem does not arise in the Sobolev scale and the classical Sobolev spaces of integer smoothness.}
\end{remark}


To close this Section, let us check the algebra structure of the supercritical Triebel-Lizorkin spaces, which we will use in Section \ref{secPrincipal}.
%
%
\begin{lemma}\label{lemAlgebra}
Let $d\in \N$, $s>0$, $1\leq p<\infty$ and $1\leq q\leq \infty$. If $\Omega\subset \R^d$ is a Lipschitz domain, then for every pair $f,g\in F^s_{p,q}(\Omega)$ we have that
\begin{equation*}
\norm{f\, g}_{F^s_{p,q}(\Omega)}\leq C_{d,s,p,q,\Omega} \norm{f}_{F^s_{p,q}\cap L^\infty(\Omega)}\norm{ g}_{F^s_{p,q}\cap L^\infty(\Omega)}.
\end{equation*}
In particular, if $sp>d$, then
\begin{equation}\label{eqAlgebra}
\norm{f\, g}_{F^s_{p,q}(\Omega)}\leq C_{d,s,p,q,\Omega} \norm{f}_{F^s_{p,q}(\Omega)}\norm{ g}_{F^s_{p,q}(\Omega)}.
\end{equation}
Moreover, for $m\in \N$ we have that
\begin{equation}\label{eqAlgebraPower}
\norm{f^m}_{F^s_{p,q}(\Omega)}\leq C_{d,s,p,q,\Omega} m^{N} \, \norm{f}_{L^\infty(\Omega)}^{m-1}  \norm{f}_{F^s_{p,q}(\Omega)},
\end{equation}
with $N$ depending on $d$, $s$, $p$ and $q$.
\end{lemma}
\begin{proof}
In \cite[Theorem 4.6.4/2]{RunstSickel} it is shown that $F^s_{p,q}\cap L^\infty(\R^d)$ is a multiplicative algebra. By Corollary \ref{coroRychkov}, there is a common extension operator $\mathcal{E}$ for $F^s_{p,q}(\Omega)$ and $L^\infty(\Omega)$ Thus,
\begin{align*}
\norm{f\, g}_{F^s_{p,q}(\Omega)}
	& \leq \norm{\mathcal{E} f \mathcal{E} g}_{F^s_{p,q}(\R^d)}	
		 \leq C \left(\norm{\mathcal{E} f}_{F^s_{p,q}(\R^d)}\norm{\mathcal{E} g}_{L^\infty(\R^d)} + \norm{\mathcal{E} f}_{L^\infty(\R^d)}\norm{\mathcal{E} g}_{F^s_{p,q}(\R^d)}\right)\\
	&	 \leq C_{d,s,p,q,\Omega} \norm{f}_{F^s_{p,q}\cap L^\infty(\Omega)}\norm{ g}_{F^s_{p,q}\cap L^\infty(\Omega)}.
\end{align*}

Regarding inequality \rf{eqAlgebraPower}, we proceed analogously, using the algebra structure \cite[Theorem 4.6.4/2]{RunstSickel} to get
\begin{align*}
\norm{\mathcal{E}(f)^{2j}}_{F^s_{p,q}}
	& \leq 2c \norm{\mathcal{E}(f)}_{L^\infty}^{j}\norm{\mathcal{E}(f)^{j}}_{F^s_{p,q}}, \quad\quad\mbox{and}\\
\norm{\mathcal{E}(f)^{2j+1}}_{F^s_{p,q}}
	& \leq c \left(\norm{\mathcal{E}(f)}_{L^\infty}\norm{\mathcal{E}(f)^{2j}}_{F^s_{p,q}}+\norm{\mathcal{E}(f)}_{L^\infty}^{2j}\norm{\mathcal{E}(f)}_{F^s_{p,q}}\right)
\end{align*}
and combining both estimates in an iterated way, in about $\lfloor\log_2(j)\rfloor$ steps.   
\end{proof}

\section{Regularity of the Riemann mapping}\label{secRiemann}






In this section we study the global  $W^{s,p}$- and Triebel-Lizorkin -regularity of conformal paramet-rizations. The natural framework here is the class of finitely connected domains, since the classical theorem of Koebe allows them a parametrization by a circle domain. On the other hand, for the general Triebel-Lizorkin spaces it is useful to slightly modify the regularity assumptions of Theorem \ref{theoSobolevRiemann}.
Namely, instead of requiring  regularity for the inverse, we assume  that the conformal maps are bi-Lipschitz, in addition the map  having the appropriate Triebel-Lizorkin -regularity.
Then later in this section we  will return to Theorem \ref{theoSobolevRiemann} and show how it follows from the results obtained.

Moreover, for clarity of presentation we start with the simply connected domains, but once they are well understood the case of general finitely connected domains will follow easily, see Corollary \ref{finiteconn} below.

The key result of this section is the following. 

\begin{theorem}\label{theoTriebelRiemann}
Suppose $s>0$,  $1<p<\infty$ with $sp>2$  and   $1 < q  <\infty$,  and let $\Omega$ be a bounded simply connected domain with Riemann map $\varphi :\D \to \Omega$. 

\noindent Then $\Omega$ is a $B^{s+1-\frac1p}_{p,p}$-domain, if and only if $\varphi \in F^{s+1}_{p,q}(\D)$ and $\varphi$ is bi-Lipschitz. 
\end{theorem}

The attentive reader notes that {the above condition} on the regularity of the boundary is independent of the value of $q$, i.e. the Riemann mapping is in all Triebel-Lizorkin spaces $F^s_{p,q}(\D)$, regardless of the value of $q$. This is related to the fact that for $s$ and $p$ fixed, all these spaces have the same trace space. Hence as a side result, we see that if a bi-Lipschitz Riemann mapping is in $F^s_{p,q_0}$ then it is   in $F^s_{p,q}$  for every $1< q< \infty$. \quad 

\medskip

For the proof of Theorem \ref{theoTriebelRiemann} we will generalize the approach used by Pommerenke in \cite[Theorems 3.5, 3.6]{PommerenkeConformal}, and work with the interplay between three elements: First, information on the Riemann mapping will be carried by $\Phi(z): = \log \varphi '(z)$. Second, the boundary values of the Riemann mapping will be encoded in the function 
\begin{equation}\label{gamma}
\gamma := (\arg \varphi ')_{|\T} ={\rm Im\,}\Phi_{|  \T}
\end{equation}
and finally we need to analyze the relation between $\gamma$ and the given  Besov-regular parameterization of the boundary $\partial \Omega$, see \eqref{eqDerivativeBoundaryGood} below.  

Since $\Phi$ can be expressed  as the Herglotz extension of $\gamma$, 
{\begin{equation*}
\Phi(z) = \log|\varphi'(0)| + \frac{i}{2\pi} \int_\T \frac{\zeta+z}{\zeta - z} \, \gamma(\zeta)\, |d\zeta| ,
\end{equation*}
see  \cite[Theorem 3.2]{PommerenkeConformal},}
in the end this will allow us to deduce the regularity of $\Phi$ by means of classical extension results. However, as the definition of $\gamma$ uses $\Phi$, this scheme needs to be applied inductively to reach arbitrary values $s > 2/p$.

\begin{lemma}\label{propoRiemannTriebel}  Let   $1<p<\infty.$  Then

\noindent  (i) \quad The Poisson extension maps $B^{s-\frac1p}_{p,p}(\T)$ to $F^s_{p,q}(\D)$, for any $1\leq q \leq \infty$ and $s>1/p$.

\smallskip

\noindent (ii) \;  The Hilbert transform acts  boundedly on $B^{s}_{p,p}(\T)$, for any $s>0.$
\smallskip

\noindent (iii) \;  {The Herglotz extension maps  $B^{s-\frac1p}_{p,p}(\T)$ to $F^s_{p,q}(\D)$, for any $1\leq q \leq \infty$ and $s>1/p$.}
\end{lemma}
\begin{proof}

The first statement  follows from \cite[Theorem 4.3.3]{TriebelTheory} which proves that the harmonic extensions of  $B^{s-\frac1p}_{p,p}(\T)$-functions belong to $F^s_{p,q}(\D)$. The reference, however, does not cover the spaces with $q=\infty$,
 but that case may be deduced from Proposition \ref{theoTriebelEmbeddingsOmega}, via the boundedness shown for $q<\infty$.  
 
 In turn, claim (ii) can  be deduced from the  equivalent definition of Besov spaces based on the Fourier series on  the torus (see e.g. \cite[6.6.1.1]{Sawano}), and the fact that the Hilbert transform commutes with Fourier multipliers and is bounded on $L^p(\T)$ for $1<p<\infty$. 

{Note that the Herglotz extension of an integrable function is holomorphic and, in particular, its real and imaginary parts are conjugate harmonic. Therefore its real part coincides with the Poisson extension, and its imaginary part is the Poisson extension of its Hilbert transform modulo additive constant, see \cite[Chapter III.1]{Garnett} for instance. Thus, the third statement follows from the previous two statements.}
\end{proof}
\smallskip


\begin{proof}[Proof of Theorem \ref{theoTriebelRiemann}]
Let us first assume that the Riemann mapping  $\varphi \in F^{s+1}_{p,q}(\D)$ and that the map is bi-Lipschitz. Then according to \cite[Theorem 3.3.3]{TriebelTheory}, the boundary value of $\varphi$ lies in the corresponding trace space,  $\varphi_{|\T} \in B^{s+1-\frac1p}_{p,p}(\T)$. In other words,  the trace of $\varphi$  is a  bi-Lipschitz Besov parameterisation of the boundary, so that the requirements of a $B^{s+1-\frac1p}_{p,p}$-domain in  Definition \ref{defBesovDomain} are satisfied. Therefore,  we only need to prove the converse direction
of Theorem \ref{theoTriebelRiemann}.

Hence assume that $\Omega$ is a $B^{s +1 -\frac1p}_{p,p}$-domain. The embedding  in Proposition \ref{theoTriebelEmbeddings} and the inclusion \eqref{inclusion} with $\varepsilon := s -\frac2p>0$ guarantee that $\Omega$ is a  ${\mathcal{C}}^{1+\varepsilon}$-domain. In particular, \cite[Theorem 3.5]{PommerenkeConformal} implies that $\varphi$ is bi-Lipschitz up to the boundary. It thus remains to show that $\varphi\in F^{s+1}_{p,q}(\D)$.
\smallskip

For this, recall  the notation $\Phi(z) := \log \varphi '(z)$, and note that defining a suitable continuous branch of the logarithm on $\overline{\D}$ poses no problem since $\D$ is simply connected. In particular, $\gamma(\zeta)$, the argument of $\varphi'$ on the boundary as in \eqref{gamma},
is a well defined continuous function.   
{Recall also that the functions are related via the
Herglotz formula,} 
\begin{equation*}
\Phi(z) = \log|\varphi'(0)| + \frac{i}{2\pi} \int_\T \frac{\zeta+z}{\zeta - z} \, \gamma(\zeta)\, |d\zeta| =:\log|\varphi'(0)| + \frac{i}{2\pi} A(\gamma)(z).
\end{equation*}

\smallskip

The main point of the proof is  to show that 
\begin{equation}\label{eq:sufficient}
\gamma \in B^{s-1/p}_{p,p}(\T).
\end{equation}
Namely, let us assume \eqref{eq:sufficient} holds. Then, since  $s-1/p>1/p$,  Lemma \ref{propoRiemannTriebel} shows that $(\log \varphi ')_{|\T} \in  B^{s-1/p}_{p,p}(\T)$, and  Lemma \ref{le:peetre}(ii) on  post-composition with smooth  functions  proves that 
$(\varphi_{|\T})' = ( \varphi')_{|\T}\in B^{s -1/p}_{p,p}(\T)$. In turn, this implies $\varphi_{|\T}\in B^{s+1-1/p}_{p,p}(\T)$.  Finally, we use Proposition \ref{propoRiemannTriebel} (i) to conclude that  $\varphi\in F^{s+1}_{p,q}(\D)$, as desired. 



For the proof of \eqref{eq:sufficient}, assume  that $\Omega$ is a $B^{s +1 -\frac1p}_{p,p}$-domain. Then its boundary has a 
 bi-Lipschitz parametrization $w:\T \to \partial \Omega$ with $w \in B^{s+1-\frac1p}_{p,p}(\T)$.  Via \eqref{inclusion} it  follows that  $w\in C^{1+\varepsilon}$, and by \cite[Theorem 3.2]{PommerenkeConformal} (or by a direct verification) we have 
 \newcommand{\Arg}{\mathop{\rm Arg}}
\begin{equation}\label{eqDerivativeBoundaryGood}
\gamma(e^{it})=-t-\frac{\pi}{2} + [\Arg w']\circ \left[w^{-1}\circ \varphi \right](e^{it}), 
\end{equation}
where we understand $\Arg w'(e^{iu}):=\arg\big(\frac{d}{du} \big[w(e^{iu})\big]\big)$ which is locally well-defined.
Note that any choice of $\Arg w'$ increases by $2\pi$ as $e^{it}$ spins around $\T$, and the linear part of $\gamma$ 
in \eqref{eqDerivativeBoundaryGood} decreases by the same amount, so $\gamma$ is indeed continuous on $\T$ after picking any consistent  choice for the multivalued function $\Arg w'$.
Moreover,  by Lemma \ref{le:peetre}(ii)  we have
 \begin{equation}\label{eqArgW}
 \Arg{w'} 
  \in B^{s-\frac1p}_{p,p}(\T)
 \end{equation} 
 as well. 
\smallskip

After these preparations we proceed to establish \eqref{eq:sufficient}, by using the knowledge \eqref{eqArgW}.  This requires  a bootstrapping argument which employs  identity \eqref{eqDerivativeBoundaryGood}: Namely,
 by assuming the truth of the claim for a smoothness $s'\in (s-1,s)$ 
 with $s'p >2$,  we will deduce the statement for  $s$.

  \smallskip
  
  As the first step  assume that   $2/p <s< 1+1/p$. This base case for our  induction can be dealt with easily. Indeed, since  our assumption on $\Omega$ implies that $\varphi$  is bi-Lipschitz, and since $w$ is  bi-Lipschitz by definition, as a parametrization of a Besov boundary, also  $w^{-1}\circ \varphi$ is a bi-Lipschitz  homeomorphism of $\T$.  Thus for any  $2/p <s< 1+1/p$,  Lemma \ref{lemBesovAdmissibleSpace}(i) and the identities \eqref{eqDerivativeBoundaryGood} - \eqref{eqArgW}
show that $\gamma  \in B^{s-\frac1p}_{p,p} (\T)$. As discussed after \eqref{eq:sufficient} above, this gives the theorem for such indices.

We then describe the induction step. Assume that $s >1$ and $\Omega $ is a $B^{s+1-1/p}_{p,p}$-domain with $p\in (1,\infty)$ and  $ps>2$.  
Then obviously $\Omega$ is also a  $B^{s'+1-1/p}_{p,p}$-domain
for every $2/p <s'<s$. Let us now assume that we already know that  the theorem is true for   such a pair $(s',p)$. We then obtain  $\varphi \in B^{s'+1-1/p}_{p,p}(\T)$, and that $\varphi$ is
  bi-Lipschitz. Moreover, also $w$ is  bi-Lipschitz and obviously
 $w\in B^{s'+1-1/p}_{p,p}$. Since  both $w$ and $\varphi$ are admissible $B^{s'+1-1/p}_{p,p}$-parametrizations of $\partial \Omega$ we deduce from  Lemma \ref{le:independence}   that 
\begin{equation*}
w^{-1}\circ \varphi \in B^{ s'+1-1/p}_{p,p}(\T), \quad\textrm{and}\;  \;\; w^{-1}\circ \varphi \;\;\textrm{is bi-Lipschitz}.
\end{equation*}
Furthermore, if in addition to the above assumptions we have $s < s'+1$, then  
 Lemma \ref{le:tasma}  with  \eqref{eqArgW}  implies that  $\, \Arg w'\circ w^{-1}\circ \varphi\in B^{s-1/p}_{p,p}(\T)$.
 In particular, from \eqref{eqDerivativeBoundaryGood} we see that now \eqref{eq:sufficient} holds.

 Thus, knowing the theorem for the pair $(s',p)$ implies it for all $(s,p)$ with  $s'<s<s'+1$. This implies the theorem for all $s>2/p$ since in the first step of the proof we noted that the claim is true for $s$ in the range $2/p<s<1+1/p$, and this interval is non-empty for all $p>1$.
\end{proof}

\smallskip

{It is likely that one may allow  some cases of $p,q\in\{1,\infty\}$ in the above theorem, but we have not pursued this since the result in the reflexive range is enough for our purposes.}

Let us then turn to the finitely connected domains. According to the  classical theorem of Koebe \cite{Koebe}, any finitely connected planar domain $\Omega$ with non-degenerate boundary components can be uniformized by a {\it circle domain}, a domain of the type
$$ U = \D(z_0,r) \setminus \bigcup_{j=1}^m {\overline{ \D(z_j,r_j)}},
$$
where the  closed subdiscs ${\overline{ \D(z_j,r_j)}} \subset \D(z_0,r) $ are  disjoint. Theorem \ref{theoTriebelRiemann}
generalises quickly to such situations.

\begin{corollary} \label{finiteconn} Suppose and $s>0$,  $1<p<\infty$ with $sp>2$  and   $1 < q  <\infty$. 
\smallskip

 \noindent Let  $U$ be a circle domain and  $\varphi :U \to \Omega$ a conformal uniformisation of   a bounded finitely connected domain $\Omega$. Then $\Omega$ is a $B^{s+1-\frac1p}_{p,p}$-domain, if and only if $\varphi \in F^{s+1}_{p,q}(\D)$ and $\varphi$ is bi-Lipschitz. 
\end{corollary}
\begin{proof} 
 If $\varphi \in F^{s+1}_{p,q}(\D)$ and $\varphi$ is bi-Lipschitz, then clearly  $\Omega$ is a $B^{s+1-\frac1p}_{p,p}$-domain. For the converse, assume that $\Omega$ is a $B^{s+1-\frac1p}_{p,p}$-domain. As such, it is also a $C^{1+\varepsilon}$-domain for some $\varepsilon >0$, so that by  \cite[Theorem 3.5]{PommerenkeConformal} the mapping is bi-Lipschitz. Hence the Corollary follows as soon as we show that  every component circle  $C_j$ of 
 $\partial U$ has a collar neighbourhood  where $\varphi \in F^{s+1}_{p,q,loc}$.
  
 Thus fix a component $\Gamma_j = \varphi(C_j)$ of $\partial \Omega$ - here $C_j$ is one of the  boundary circles  of $U$ - and we may well assume that   $C_j=\T=\partial \D$. Consider now first the case where $\Gamma_j = \varphi(\T)$ is the  outer component of $\partial \Omega$. We may then assume that similarly  $\T$ is the outer component of $\partial U$. Namely if not,  compose $\varphi$ with $\rho(z) = 1/z$, the analytic reflection in $\T$. Since  diffeomorphic coordinate changes preserve the Triebel spaces,
 we have $\varphi \in F^{s+1}_{p,q}(U)$ if and only if  $\varphi \circ \rho \in F^{s+1}_{p,q}(\rho U)$.
 
  Next, let $\widetilde \Omega \supset \Omega$ denote the simply connected and bounded Besov-domain for which $\partial\widetilde\Omega = \varphi(\T)$. Choose also a conformal map $\widetilde\varphi:\D\to\widetilde\Omega$.  By the reflection principle, the map $$\psi:=(\widetilde\varphi)^{-1}\circ\varphi$$  extends analytically across the unit circle, to a full neighbourhood $N$ of  $\T$.  Furthermore,
Theorem \ref{theoTriebelRiemann} shows that  $\widetilde\varphi \in F^{s+1}_{p,q}( \D)$. Hence one may again apply a diffeomorphic coordinate change (which preserves the Triebel spaces) to  see that  the composition $\varphi=\widetilde \varphi\circ\psi\in F^{s+1}_{p,q}(N\cap U)$.
\smallskip


 Finally, consider the case where $\Gamma_j = \varphi(\T)$ is an inner component of the boundary $\partial \Omega$. Similarly as above we may assume that, however, $\T$ is the outer component of $\partial U$.  In this setting,  let us denote by $\widetilde\Omega \subset \overline{\C}$
   the unbounded domain whose boundary (with respect to $\overline{\C}$) is equal to $\varphi(\T)$. Thus again $\Omega\subset\widetilde\Omega$. We may assume that $0$ lies inside the bounded component of $\C\setminus \partial \widetilde\Omega$, and  again  apply  Theorem \ref{theoTriebelRiemann} to an auxiliary mapping $\widetilde \varphi$ on $\D$,  now a conformal map onto the image of $\widetilde\Omega $ under the inversion $z\mapsto 1/z$. 
   
  In this case the analytic change of coordinates is given by  $\psi(z) = \widetilde \varphi \,^{-1}\big(1/\varphi(z)\big)$, but as above one can  use  Theorem \ref{theoTriebelRiemann} and  Lemma \ref{le:runst} to show that $\varphi = 1/\bigl(\widetilde \varphi\circ\psi\bigr) \in F^{s+1}_{p,q}(N\cap U) $.
  \end{proof}

 \smallskip

 
Last, the above analysis generalises from circle domains to finitely connected $B^{s+1-1/p}_{p,p}$-domains. However, 
 in taking compositions and inverses in  Triebel-Lizorkin spaces,  according to Theorem \ref{lemTriebelAdmissibleBanach} we need to restrict to spaces with non-integer smoothness.


\begin{corollary} \label{confB}
Suppose $s>0$ and $1<p<\infty$ with $sp>2$. If $\Omega$ and $\Omega'$ are bounded and finitely connected $B^{s+1-1/p}_{p,p}$-domains, and $h:\Omega \to \Omega'$  is a conformal homeomorphism, then 
\smallskip

a) \; $h \in W^{s+1,p}(\Omega)$.  

b) \; If $ s $ is non-integer 
and   $1 \leq q \leq \infty$, then $h \in F^{s+1}_{p,q}(\Omega)$.

c) \;  $h$ is bilipschitz.

\end{corollary}
\begin{proof} For a), if $\varphi_1: U_1 \to \Omega$ and $\varphi_2:=h\circ\varphi_1: U_1 \to \Omega'$ are conformal maps from the circle domain $U_1$,  then by Corollary \ref{finiteconn} both $\varphi_j$'s are bi-Lipschitz with $\varphi_j \in W^{s+1,p}(U_1)$. Thus  Theorem \ref{lemSobolevAdmissibleSpace} shows that
$h = \varphi_2 \circ \varphi_1^{-1} \in W^{s+1,p}(\Omega)$.

When $s\in \R_+ \setminus \N$  the same argument, now with  Theorem \ref{lemTriebelAdmissibleBanach}, proves b). Further, in both cases the decomposition $h = \varphi_2 \circ \varphi_1^{-1}$ shows that $h$
 is bi-Lipschitz.
 \end{proof}

\medskip

\noindent {\it \bf Proof of Theorem \ref{theoSobolevRiemann}}. \,  If $\varphi :\D \to \Omega$ is a Riemann map of a $B^{s+1-1/p}_{p,p}$-domain, then by Corollary \ref{confB} both $\varphi  \in W^{s+1,p}(\D)$ and  $\varphi^{-1}  \in W^{s+1,p}(\Omega)$. 

Conversely, assume that we have a Riemann map with $\varphi  \in W^{s+1,p}(\D)$ and $\varphi^{-1} \in W^{s+1,p}(\Omega)$. The first condition implies that $\varphi$ is Lipschitz continuous up to the boundary $\partial \D$. To see that it is even bi-Lipschitz, note that
 by Definition \ref{defRestrictionNorm},  $\varphi^{-1}$ is the restriction to $\Omega$ of a function belonging to $W^{s+1,p}(\C)$, in particular Lipschitz continuous in all of $\Omega$.

  Thus $\varphi$ provides the  parametrization of $\partial \Omega$, as required by Definition \ref{defBesovDomain}, for $\Omega$ to be a  $B^{s+1-1/p}_{p,p}$-domain. \hfill $\Box$
  
  \bigskip


In the above proof of Theorem \ref{theoSobolevRiemann} we used   Definition  \ref{defRestrictionNorm} for the fractional Sobolev space $W^{s+1,p}(\Omega)$, as the space of restrictions to $\Omega$ of functions in $W^{s+1,p}(\C)$. However,  there is a subtlety here, in that in case $s$ is an integer it is equally natural to consider $W^{s+1,p}(\Omega)$ as defined by  the 'intrinsic'  Definition \ref{defSobolevNonHomogeneous}, that is, to consider the space of functions $f$ in $\Omega$ with  the norm
 \begin{equation}\label{wnormi}
  \| f\|_{W^{s+1,p}(\Omega)} = \| f \|_{L^p(\Omega)} +  \sum_{k=1}^{s+1} \| D^{k} f \|_{L^p(\Omega)} < \infty.
 \end{equation}
In general domains $\Omega$ these two definitions of $W^{s+1,p}(\Omega)$ give different function spaces.
However, our Theorem \ref{theoSobolevRiemann} holds true independently of which definition is chosen. Indeed, in view of the above proof of Theorem \ref{theoSobolevRiemann}, to see this it suffices to show
\begin{proposition} \label{intrinsic} Suppose $1 \leq s \in \N$ and $1 < p < \infty$ with $sp > 2$, and that $\varphi : \D \to \Omega$ is a conformal map onto a bounded Jordan domain $\Omega$. If we have
$$ \sum_{k=1}^{s+1} \bigl( \| D^{k} \varphi \|_{L^p(\D)} + \| D^{k} \varphi^{-1} \|_{L^p(\Omega)} \bigr) < \infty 
$$
in terms of the norm \eqref{wnormi},  then  $\varphi $ is bi-Lipschitz.
\end{proposition}
\begin{proof}
First note that $\varphi' \in L^{\infty}(\D)$ by the Sobolev embedding \eqref{eqEmbeddingIntoDiniSOBO}, and thus $\varphi$ is Lipschitz continuous.

To show that $\varphi$ is bi-Lipschitz, suppose next $s=1$ which requires $p >2$. With chain rule,  differentiating  the basic identity $\bigl( \varphi^{-1}\bigr)' = (1/\varphi') \circ \varphi^{-1}$ one obtains 
 \begin{equation}\label{inverse}
 \bigl( \varphi^{-1}\bigr)'' = - \frac{\varphi''}{(\varphi')^3} \circ \varphi^{-1} = \frac{1}{2} \left( \frac{d}{dz}\frac{1}{(\varphi')^2}\right) \circ \varphi^{-1} . 
 \end{equation}
In particular,
$$ 
 \| (\log  \varphi')'\|^p_{L^p(\D)} \leq \| \varphi' \|_{\infty}^{2p-2} \int_\D \left|  \frac{\varphi''}{(\varphi')^3}  \right|^p |\varphi'|^2 \,dm = 
  \| \varphi' \|_{\infty}^{2p-2}  \int_\Omega \left|   \bigl( \varphi^{-1}\bigr)'' \right|^p  \,dm < \infty
$$
by our assumptions. When $p>2$ Morrey-Sobolev inequality, see  \cite[Section 5.6.2]{Evans}, shows that  $\log  \varphi'$ is H\"older continuous. Thus 
\cite[Theorem 3.5]{PommerenkeConformal} applies and proves that $\varphi$ is bi-Lipschitz. 

 In case  $2 \leq s \in \N$, any $1 < p < \infty$ is allowed. Now the above analysis does not suffice if $p <2$, and this requires us to estimate the third derivatives. Here it is convenient to   consider a non-linear differential operator, a variant of the classical {\it Schwarzian derivative}
$S_f = \left( \frac{f''}{f'} \right)' - \frac{1}{2} \left( \frac{f''}{f'} \right)^2$ of an analytic function  $f$. For our purposes we define, for $f$ analytic, 
$$ {\widehat S}_f :=  \left( \frac{f''}{f'} \right)' - 2 \left( \frac{f''}{f'} \right)^2.$$
As an amusing side note,  it is not difficult to see, e.g. from \eqref{schwarz} below, that ${\widehat S}_f \equiv 0 $ if and only if either $ f(z) = a\sqrt{z+b} +c$  or $f(z)=az+b$, for some $a,b,c \in \C$. In comparison, the classical Schwarzian derivative $S_f \equiv 0 $ if and only if $f$ is a M\"obius transform, i.e.  $f(z) = a \frac{1}{z+b} + c$ or $f(z)=az+b$, for some $a,b,c \in \C$. 

Next, via  an explicit differentiation and \eqref{inverse},
 \begin{equation}\label{schwarz}
  \frac{d^2}{dz^2}\frac{1}{(\varphi')^2} =  -2 \,{\widehat S}_\varphi \, \frac{1}{(\varphi')^2}, \quad {\rm and} \quad \bigl( \varphi^{-1}\bigr)''' = - \left(\frac{1}{(\varphi')^3} \, {\widehat S}_\varphi  \right) \circ \varphi^{-1}.
 \end{equation} 
 These identities show, first, that by our assumptions
 $$ \| {\widehat S}_\varphi \|_{L^p(\D)} \leq \| \varphi' \|_{\infty}^{3-2/p} \, \| \bigl( \varphi^{-1}\bigr)'''\|_{L^p(\Omega)} < \infty,
 $$
 and second, that the function $w(z) :=  1/(\varphi')^{2} $ is analytic in the unit disc $\D$ with
 \begin{equation}\label{linear}
 w''(z) = A(z) w(z), \quad z \in \D, \qquad {\rm where } \quad A=-2\widehat S_\varphi \in L^p(\D) \;  \; {\rm for \; some\; } 1 < p < \infty. 
  \end{equation} 
  In particular, here note that  by the mean value principle  and H\"older's inequality the coefficient $A(z)$ has the bound 
  $$
 | A(z)|=\frac{2}{\pi(1-|z|)^2}\Big|\int_{B(z,1-|z|)} A(z)dz \Big|\leq  2\pi^{1/p' -1}(1-|z|)^{2/p'-2}\|A\|_{L^{p}(\D)}, \qquad z\in\D.
  $$

  
  It remains to show that \eqref{linear} forces $1/\varphi'$ to be bounded. Once this is done, the bi-Lipschitz property follows from  \cite[Theorem 3.5]{PommerenkeConformal}.   
  For readers convenience we formulate this remaining step in terms of  a separate lemma. 

  
  \begin{lemma}\label{le:comp} Assume that $1<\alpha<2$ and $u:[0,1)\to\C$ is a $C^2$-function that satisfies on $[0,1)$ the differential equation
  \begin{equation*}
  u''(t)= h(t)u(t). \qquad 
  \end{equation*}
  If $h$ has  the bound 
    \begin{equation*}
 |h(t)|\leq c(1-t)^{-\alpha}, \qquad t\in[0,1),
  \end{equation*}
  then the solution $u$ is bounded, with $\|u\|_{L^\infty[0,1)}\leq C(c,\alpha, |u(0)|, |u'(0)|)$.
  \end{lemma}
  \noindent {\it Proof.} We first note a  comparison property: Suppose $v$ is a solution to
    \begin{equation*}
  v''(t)= H(t)v(t)\qquad \textrm{with}\quad \qquad H(t)\geq c(1-t)^{-a},
  \end{equation*}
 where  $v(0)>|u(0)|$ and $v'(0)>|u'(0)|$ (in particular, $v$, $v'$ and $H$ are all  real-valued and positive). Then
  $|u(t)|<v(t)$ for all $t\in (0,1)$.   
  
  To see this, note that by the initial conditions  $|u(t)|  < v(t)$ for  all small enough $t>0$. 
  Hence consider
   $$ t_1 := \sup\{ t \in [0,1) :  |u(s)|<v(s)\;\; \; \forall \; 0  \leq s < t \}.
  $$

  
  
 \noindent  Choose then $0 < \delta <  v'(0) - |u'(0)|$. As $|u|\leq v$ on $[0,t_1]$ we  obtain
  \begin{eqnarray*}
  |u'(s)|&=&| u'(0)+\int_0^{s}h(t)u(t)dt| < v'(0)+ \int_0^{s}H(t)v(t)dt - \delta =v'(s) - \delta, \quad 0 \leq s \leq t_1.
  \end{eqnarray*}
Another integration gives then $|u(t_1)| \leq v(t_1) -\delta t_1 < v(t_1)$. Thus $t_1 = 1$, proving the comparison property. 

The statement of the lemma now follows by applying  the above comparison to the function $Bv(t)$, where $v(t):=\exp\big(-c'(1-t)^{2-\alpha}\big)$ with $c':=c(2-\alpha)^{-1}(\alpha-1)^{-1}$, by choosing the constant $B$  large enough. This works, since $v(0), v'(0)>0$  and we have
  $$
  H(t):=\frac{v''(t)}{v(t)}=c(1-t)^{-\alpha} +\frac{c^2 }{(\alpha-1)^2}(1-t)^{2(1-\alpha)}\geq c(1-t)^{-\alpha}.
  $$
\smallskip
  
 Finally, with Lemma \ref{le:comp}  now proven,  
 Proposition \ref{intrinsic} follows also in the case $1 < p < 2$, by applying the Lemma on each radius of the unit disc to the function $1/(\varphi')^2$,  which  satisfies the complex differential equation \eqref{linear}. We obtain an upper bound independent of the radius, whence  the function $1/(\varphi')^2$ is bounded on the unit disc.
    \end{proof}

\section{Quasiconformal mappings on domains}\label{secSobolev}  

We next turn to proving 
Theorems \ref{theoSobolevStabilityOfDomains}, \ref{theoSobolevRiemannKaksi} and \ref{theoTriebelStabilityOfDomains}, which describe  the global Sobolev and Triebel-Lizorkin regularity  for quasiconformal mappings $f:\Omega \to \Omega'$ between two $B^{s+1-\frac1p}_{p,p}$-domains. We begin with necessary  results on traces. 

In $C^\infty$-domains it is well known that the trace spaces of Sobolev functions (and of Triebel-Lizorkin functions) are the diagonal Besov spaces with a decay of $1/p$ in the smoothness parameter. Using properties of the Riemann mapping  we can now recover 
the classical trace relations from the $C^\infty$-domains to the  domains with only  $B^{s+1-\frac1p}_{p,p}$-regularity.

\begin{lemma}\label{lemSobolevTrace}
Let $s\in\N_0 =\N \cup \{0\}$ and $1 < p<\infty$ with $sp>2$. If $\Omega$ is a finitely connected $B^{s+1-\frac1p}_{p,p}$-domain and $g\in W^{s+1,p}(\Omega)$, then  $g|_{\partial\Omega} \in B^{s+1-\frac1p}_{p,p}(\partial\Omega)$. 
\end{lemma}
\begin{proof}
First assume that $\Omega$ is a simply connected $B^{s+1-1/p}_{p,p}$-domain and let $g\in W^{s+1,p}(\Omega)$. Consider a Riemann mapping $\varphi:\D \to \Omega$. By Theorem \ref{theoTriebelRiemann}, the mapping $\varphi$ lies in $ W^{s+1,p}(\Omega)$ and  is bi-Lipschitz. Thus by \rf{eqInvarianceUnderbiLipschitzSOBO} we have $g \circ \varphi \in W^{s+1,p}(\D)$ as well. 

 It follows from  \cite[Theorem { 3.3.3}]{TriebelTheory} that  the trace space of $W^{s+1,p}(\D)$ equals $B^{s+1-1/p}_{p,p}(\T)$.
Hence the restriction $(g \circ \varphi)|_{\T} \in B^{s+1-1/p}_{p,p}(\T)$. Since both functions are continuous up to the boundary, the trace is defined pointwise, and $(g \circ \varphi)|_{\T}=g|_{\partial\Omega} \circ \varphi|_{\T}$. By Definition \ref{defTraceSpace}, this means that $g\in B^{s+1-1/p}_{p,p}(\partial\Omega)$. 

Assume next that $\Omega$ is a finitely connected $B^{s+1-1/p}_{p,p}$-domain and let $g\in W^{s+1,p}(\Omega)$. Since $\Omega$ is a Sobolev extension domain, there is a compactly supported function $Eg \in W^{s+1,p}(\C)$ which coincides with $g$ in $\Omega$ and, therefore, it has the same trace in $\partial\Omega$. 

Consider then one of the boundary components $\Gamma_j$ of $\partial\Omega$ and let $\Omega_j$ be the bounded simply connected domain with boundary $\Gamma_j$.  
Further, let $g_j:= Eg|_{\Omega_j}$. Then, as we have shown above, $g|_{\Gamma_j} \equiv g_j|_{\partial\Omega_j} \in B^{s+1-1/p}_{p,p}(\partial\Omega_j) \equiv B^{s+1-1/p}_{p,p}(\Gamma_j)$.
Since this happens with all the components of the boundary of $\Omega$, it follows that 
$g|_{\partial\Omega} \in B^{s+1-\frac1p}_{p,p}(\partial\Omega).$
\end{proof}

  A similar argument applies in the spaces  $F^{s+1}_{p,q}(\Omega)$. However, now we use Lemma \ref{lemTriebelAdmissibleBanach} instead of Lemma \ref{lemSobolevAdmissibleSpace}, and thus require a non-integer smoothness parameter $s$.

\begin{lemma}\label{lemTriebelTrace}
Let $ s \in \R_+ \setminus \N$, $1 < p <\infty$ and $1\leq q \leq \infty$  with $s p>2$. 
 If $\Omega\subset\C$ is a finitely connected $B^{s+1-\frac1p}_{p,p}$-domain and $g\in F^{s+1}_{p,q}(\Omega)$, then  $g|_{\partial\Omega} \in B^{s+1-\frac1p}_{p,p}(\partial\Omega)$.
 \end{lemma} 
 
  

\bigskip

Returning then to the proof of Theorem \ref{theoSobolevStabilityOfDomains}, assume  
 that  $f:\Omega \to \Omega'$ is a  quasiconformal mapping between two $B^{s+1-\frac1p}_{p,p}$-domains, with $\mu_f \in W^{s,p}(\Omega)$.

Since by assumption  $\Omega$ is a Lipschitz-domain one can extend $\mu_f$ to a compactly supported Beltrami coefficient $\tilde \mu \in W^{s,p}(\C)$. 
According to  \cite[Theorem 1.1]{CruzMateuOrobitg} one has  $F \in W^{s+1,p}_{loc}(\C)$ for any quasiconformal map with Beltrami coefficient  $\tilde \mu$. In particular,  $F|_\Omega \in  W^{s+1,p}(\Omega)$. 

Next, as $ \tilde \mu \in C^1$, the Jacobian $J(z,F)$ does not vanish \cite[p. 167]{AstalaIwaniecMartin},  and therefore $F$  is also bi-Lipschitz  on compact subsets of the plane. Moreover, from Lemma \ref{lemSobolevTrace} we see that the boundary values $F|_{\partial\Omega} \in B^{s+1-\frac1p}_{p,p}(\partial \Omega)$. A glance at the Definitions \ref{defBesovDomain} and \ref{defTraceSpace} shows that thus also $F(\Omega)$ is a $B^{s+1-\frac1p}_{p,p}$-domain. 

To connect $F$ with the original mapping  $f:\Omega \to \Omega'$ one applies Stoilow's factorization which gives $f = h \circ F$, where $h: F(\Omega) \to \Omega'$ is conformal. Here $h \in W^{s+1,p}\bigl(F(\Omega)\bigr)$ by Corollary \ref{confB}, while 
Corollary \ref{lemSobolevAdmissibleSpace} shows that $f = h \circ F \in W^{s+1,p}(\Omega)$. This completes the proof of Theorem \ref{theoSobolevStabilityOfDomains}. 
\smallskip

For the  proof of Theorem \ref{theoSobolevRiemannKaksi}, note first that if $g: \D \to \Omega$ is a quasiconformal homeomorphism onto a $B^{s+1-\frac1p}_{p,p}$-domain with Beltrami coefficient $\mu \in W^{s,p}(\D)$, then Theorem \ref{theoSobolevStabilityOfDomains} shows that $g \in W^{s+1,p}(\D)$. Also, the factorisation $g = h \circ F$ with Corollary \ref{confB} c) and the proof of 
Theorem \ref{theoSobolevStabilityOfDomains} shows that $g$ is bi-Lipschitz. Conversely, for a quasiconformal $g \in W^{s+1,p}(\D)$ the  Beltrami coefficient $\mu_g \in W^{s,p}(\D)$, and if in addition $g$ is bi-Lipschitz, then Lemma \ref{lemSobolevTrace} shows that $g|_{\T}$ gives a parametrization of $\partial \Omega$, required by  Definition \ref{defBesovDomain} to make $\Omega$ a $B^{s+1-\frac1p}_{p,p}$-domain.

\bigskip

Last,  the proof of  Theorem \ref{theoTriebelStabilityOfDomains}, concerning the case where $f:\Omega \to \Omega'$ is quasiconfomal with $\mu_f \in F^s_{p,q}(\Omega)$ and  $ s \in \R_+ \setminus \N$, is again similar. By assumptions of the Theorem, $\Omega$ and  $\Omega'$ are $B^{s+1-\frac1p}_{p,p}$-domains, thus $\mu_f$ extends to $\tilde \mu \in F^s_{p,q}(\C)$,  and if $F: \C \to \C$ is quasiconformal with Beltrami coefficient  $\tilde \mu$, then   \cite[Theorem 1.1]{CruzMateuOrobitg} implies that $F \in (F^{s+1}_{p,q})_{ loc}(\C)$. 

As in the proof of Theorem  \ref{theoSobolevStabilityOfDomains} we see that $F$ is bi-Lipschitz in $\Omega$, and using then Lemma \ref{lemTriebelTrace} it follows that $F(\Omega)$ is a  $B^{s+1-\frac1p}_{p,p}$-domain. Finally, in the Stoilow factorisation $f = h \circ F$ the conformal factor $h \in F^{s+1}_{p,q}\bigl(F(\Omega)\bigr)$ by Corollary \ref{confB} b), so that  the 
proof of Theorem  \ref{theoTriebelStabilityOfDomains}  is completed via Theorem \ref{lemTriebelAdmissibleBanach}. 

 \bigskip
 
  To compare the required steps the reader may  use the following  dictionary:
\begin{center}
\begin{tabular}{ |c|c|c| } 
 \hline
  Sobolev context  & Triebel-Lizorkin context\\
 \hline
  $W^{s,p}(\Omega)$   & $F^s_{p,q}(\Omega)$  \\ 
  $W^{s+1,p}(\Omega)$  & $F^{s+1}_{p,q}(\Omega)$\\ 
  $B^{s+1-\frac1p}_{p,p}(\partial\Omega)$ & $B^{s+1-\frac1p}_{p,p}(\partial\Omega)$\\ 
 \hline
\end{tabular}
\end{center}

\section{Proof of Theorem \ref{theoInvertBeltrami}}\label{secPrincipal}
In this section we prove Theorem \ref{theoInvertBeltrami}. We will write $\Beurling_{\Omega}=\chi_{\Omega} \Beurling( \chi_\Omega \cdot)$ and $\Beurling_{\Omega,\Omega^c}=\chi_{\overline{\Omega}^c} \Beurling( \chi_\Omega \cdot)$, and similarly for the Cauchy transform or any other operator acting on functions defined in $\C$. In Section \ref{secOutlineBeltrami} we outline its proof, which follows the steps of \cite{PratsQuasiconformal} by means of a classical Fredholm argument, reducing the proof to checking that $I_\Omega-\mu^m (\Beurling^m)_\Omega$ is invertible, and that the commutator $[\mu,\Beurling_\Omega]$ and the Beurling reflection  $\chi_\Omega \Beurling (\chi_{\Omega^c} \Beurling (\chi_\Omega \cdot))$ are compact (together with a family of related operators).

After that we recall Dorronsoro's Betas in Section \ref{secBetas}, which will be our tool to measure the flatness of the boundary in a multi-scale basis. Next we show that the iterates of the truncated Beurling transform are bounded with subexponential  growth (polynomial in fact) in Section \ref{secIterates}, which will allow us to isolate the invertible part of the Fredholm operator. We check the compactness of the commutator  in Section \ref{secCommutator}. Finally we prove the compactness of the Beurling Reflection in Section \ref{secReflection} in what represents the most difficult challenge in this paper, leaving a technical lemma to be shown in Section \ref{secMeyer} where Meyer's polynomials are introduced to control oscillation in Whitney cubes. Some additional details of the structure of this section were already given in Section \ref{subse:structure}.

{
There are several novelties in this section worthy to mention. First of all, the approximation of the boundary smoothness via betas is not new, stemming from the work of Dorronsoro and adapted to the measurement of boundary Besov regularity by Cruz and Tolsa, but in Definition \ref{defBetasBoundary} we introduce an approach which makes it more easy to use than in  previous papers, defining a beta coefficient directly related to the Whitney cube. This does not imply a change in the techniques, but  it makes the proofs more readable, since we don't need to switch every time to a local parameterization of the boundary and back. This usage is summarized by Lemmata \ref{lemNormalVectorBetas}, \ref{lemBoundForBeurling} and \ref{lemSummingTheBetas}, which {will be used} to control many  {key quantites} related to boundary.}


{{All in all,} the main result of this section is 
the compactness of the Beurling reflection. In \cite{PratsQuasiconformal} the proof for the Sobolev scale  with {integer  degrees of smoothness is quite cumbersome, and does not extend to the case we have here. Here, instead we prove directly}  the smoothing estimate 
$$\norm{\chi_\Omega \Beurling (\chi_{\Omega^c} \Beurling^m (\chi_\Omega f)) }_{\dot F^s_{p,q}(\Omega)} \lesssim_h \norm{f}_{\mathcal{C}^h(\Omega)}.$$
The authors believe that this approach may help to simplify the proof in the Sobolev scale with natural smoothness as well. 

To obtain {the above  estimate} we have to use the beta lemmata again for the local part, but we approach the nonlocal part {via} an explicit expression for the kernels of these reflections. 
{Using} this particular expression, we rewrite the reflection as a main term which is the product of the iterated Beurling transform times the derivative of the Beurling transform of the characteristic function plus a finite sum of terms whose kernels involve the Taylor errors of the antiderivatives of the iterated Beurling transform $\bar\partial^{-(m-j)}\Beurling^j\chi_\Omega$. 
{These error terms are controlled using  
a number of techniques developed by the second and third authors in previous works, the main novelty being the proof of Lemma \ref{lemTaylorToBesov}. There we substitute the Taylor approximation by Meyer's polynomials, in order to apply Poincar\'e inequalities.} 
}

\subsection{Proof  of Theorem \ref{theoInvertBeltrami} modulo key auxiliary results}\label{secOutlineBeltrami}

{Our aim is to establish the invertibility} of $(I-\mu \Beurling)(\chi_\Omega \cdot)$ in $F^s_{p,q}(\Omega)$. Here  $\chi_\Omega g$ denotes the extension of  a given function $g\in F^s_{p,q}(\Omega)$ by zero in $\Omega^c$. We will follow the scheme used in \cite{Iwaniec}. That is, we will reduce the proof to the compactness of the commutator. In our context, however, as it happens in \cite{CruzMateuOrobitg} and \cite{PratsQuasiconformal}, we will have to deal with the compactness of operators like $\chi_\Omega \Beurling \left(\chi_{\Omega^c} \Beurling\left(\chi_\Omega\cdot \right)\right)$ as well. 

Consider $m\in \N$. Recall that $(\Beurling^m)_\Omega g =\chi_\Omega \Beurling^m(\chi_\Omega g)$ for $g \in L^p(\Omega)$  and $I_\Omega$ be the identity on $F^s_{p,q}(\Omega)$.  
Let us define 
$P_m:=I_\Omega+\mu \Beurling_\Omega+(\mu \Beurling_\Omega)^2+\cdots + (\mu \Beurling_\Omega)^{m-1}$. We will check that the truncated Beurling transform is bounded on $F^s_{p,q}(\Omega)$ in Theorem \ref{theoIterates} below. Since $F^s_{p,q}(\Omega)$ is a multiplicative algebra (under the conditions of Lemma \ref{lemAlgebra}), we have that $P_m$ is bounded in $F^s_{p,q}(\Omega)$. Note that
\begin{equation}\label{eqPmToIMu}
P_m\circ (I_\Omega-\mu \Beurling_\Omega)=(I_\Omega-\mu \Beurling_\Omega)\circ P_m=I_\Omega-(\mu \Beurling_\Omega)^m,
\end{equation}
and
\begin{align}\label{eqFactorizePm}
I_\Omega-(\mu \Beurling_\Omega)^m
\nonumber	&=(I_\Omega-\mu^m(\Beurling^m)_\Omega)+\mu^m((\Beurling^m)_\Omega-(\Beurling_\Omega)^m)+(\mu^m (\Beurling_\Omega)^m-(\mu \Beurling_\Omega)^m)\\
			& =A^{(1)}_m+\mu^m A^{(2)}_m+ A^{(3)}_m.
\end{align}
Note the difference between $(\Beurling_\Omega)^m g= \chi_\Omega \Beurling(\dots\chi_\Omega \Beurling(\chi_\Omega \Beurling(\chi_\Omega g)))$ and $(\Beurling^m)_\Omega g =\chi_\Omega \Beurling^m(\chi_\Omega g)$.  We want to check that for $m$ large enough, the operator $I_\Omega-(\mu \Beurling_\Omega)^m$ is the sum of an invertible operator and a compact one.

First we will study the compactness of $A^{(3)}_m=\mu^m (\Beurling_\Omega)^m-(\mu \Beurling_\Omega)^m$. To start, writing $[\mu, \Beurling_\Omega](\cdot)$ for the commutator $\mu \Beurling_\Omega(\cdot)-\Beurling_\Omega(\mu \cdot)$ we have the telescopic sum
\begin{align*}
A^{(3)}_m	
			& =\sum_{j=1}^{m-1}\mu^{m-j}[\mu, \Beurling_\Omega]\left(\mu^{j-1}(\Beurling_\Omega)^{m-1}\right)+(\mu \Beurling_\Omega) A^{(3)}_{m-1}.
\end{align*}
Arguing by induction we can see that $A^{(3)}_m $ can be expressed as a sum of operators bounded in $F^s_{p,q}(\Omega)$ which have $[\mu, \Beurling_\Omega]$ as a factor. It is well-known that the compactness of a factor implies the compactness of the operator (see for instance \cite[Section 4.3]{Schechter}). { In \emph{our first key auxiliary result,} Lemma \ref{lemCompactnessCommutator} below}, we verify that the commutator $[\mu, \Beurling_\Omega]$ is compact in $F^s_{p,q}(\Omega)$.

Consider now $A^{(2)}_m=(\Beurling^m)_\Omega-(\Beurling_\Omega)^m$, which is bounded in $F^s_{p,q}(\Omega)$ again by Theorem \ref{theoIterates} below. We define the operator 
\begin{equation}\label{eqDefinitionReflection}
\mathcal{R}_m g:=\chi_\Omega \Beurling\left(\chi_{\Omega^c} \Beurling^{m}(\chi_\Omega \, g)\right)
\end{equation} 
{which is well defined for instance if $g\in L^p(\C)$ with $p>1$}. This operator can be understood as a (regularizing) reflection with respect to the boundary of $\Omega$. Note that $\mathcal{R}_{m-1}=\chi_\Omega [\chi_\Omega,\Beurling]\circ \Beurling^{m-1}(\chi_\Omega\cdot)$, leading to $A^{(2)}_m= \mathcal{R}_{m-1}+ \Beurling_\Omega \circ A^{(2)}_{m-1}$. Thus, the reflection is bounded and the compactness of $\mathcal{R}_m$ shown in { \emph{our second key auxiliary result,} Theorem \ref{theoCompactnessReflection} below, } will prove the compactness of $A^{(2)}_m$. 

Now, the following claim is the remaining ingredient for the proof of Theorem \ref{theoInvertBeltrami}.
\begin{lemma}\label{lemInvertible}
 For $m$ large enough, $A^{(1)}_m $ is invertible.
\end{lemma}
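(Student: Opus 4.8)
The plan is to obtain invertibility of $A^{(1)}_m = I_\Omega - \mu^m (\Beurling^m)_\Omega$ by a Neumann series argument. It suffices to show that for $m$ large enough the operator norm of $\mu^m (\Beurling^m)_\Omega$ on $F^s_{p,q}(\Omega)$ is at most $1/2$; then $A^{(1)}_m$ is invertible with inverse $\sum_{j\ge 0}\bigl(\mu^m (\Beurling^m)_\Omega\bigr)^j$, the series converging in operator norm. First I note that $\mu^m (\Beurling^m)_\Omega$ indeed maps $F^s_{p,q}(\Omega)$ to itself: $(\Beurling^m)_\Omega$ is bounded on $F^s_{p,q}(\Omega)$ by Theorem \ref{theoIterates}, and multiplication by $\mu^m\in F^s_{p,q}(\Omega)$ preserves the space, because under the standing hypotheses $0<s<1$, $2<sp<\infty$, $d=2$, Lemma \ref{lemAlgebra} asserts that $F^s_{p,q}(\Omega)$ is a multiplicative algebra.

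The norm estimate will combine two quantitative inputs. On one hand, Theorem \ref{theoIterates} furnishes a polynomial bound $\norm{(\Beurling^m)_\Omega}_{F^s_{p,q}(\Omega)\to F^s_{p,q}(\Omega)}\le C(1+m)^N$, with $C$ and $N$ depending only on $s,p,q,\Omega$. On the other hand, setting $\kappa:=\norm{\mu}_{L^\infty(\Omega)}<1$, the sharp power inequality \rf{eqAlgebraPower} gives $\norm{\mu^m}_{F^s_{p,q}(\Omega)}\le C\, m\, \kappa^{m-1}\norm{\mu}_{F^s_{p,q}(\Omega)}$. Applying the algebra inequality \rf{eqAlgebra} to the product $\mu^m\cdot(\Beurling^m)_\Omega f$ and then feeding in these two bounds, I will arrive at
$$\norm{\mu^m (\Beurling^m)_\Omega f}_{F^s_{p,q}(\Omega)}\le C'\, m\,(1+m)^N\, \kappa^{m-1}\,\norm{\mu}_{F^s_{p,q}(\Omega)}\,\norm{f}_{F^s_{p,q}(\Omega)}$$
with $C'$ independent of $m$ and $f$.

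Since $\kappa<1$, the coefficient $C'\, m\,(1+m)^N\,\kappa^{m-1}\,\norm{\mu}_{F^s_{p,q}(\Omega)}$ tends to $0$ as $m\to\infty$ (exponential decay dominates the polynomial factor), so there exists $m_0$ such that $\norm{\mu^m (\Beurling^m)_\Omega}_{F^s_{p,q}(\Omega)\to F^s_{p,q}(\Omega)}\le 1/2$ for all $m\ge m_0$, which finishes the proof. I expect no serious obstacle in this chain of estimates: the entire difficulty is concentrated in the subexponential (in fact polynomial) control of the iterates $\norm{(\Beurling^m)_\Omega}$ provided by Theorem \ref{theoIterates}, and once that is granted, Lemma \ref{lemInvertible} follows from the multiplicative algebra structure together with the sharp bound on $\norm{\mu^m}$.
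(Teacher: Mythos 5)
Your argument is correct and is essentially the paper's own proof: both combine the algebra inequalities \rf{eqAlgebra} and \rf{eqAlgebraPower} (giving $\norm{\mu^m}_{F^s_{p,q}(\Omega)}\lesssim m\,\norm{\mu}_{L^\infty}^{m-1}\norm{\mu}_{F^s_{p,q}(\Omega)}$) with the polynomial-in-$m$ bound on $\norm{(\Beurling^m)_\Omega}$ from Theorem \ref{theoIterates}, so that the exponential factor $\norm{\mu}_{L^\infty}^{m-1}$ forces the operator norm of $\mu^m(\Beurling^m)_\Omega$ below $1$ for large $m$ and $A^{(1)}_m$ is invertible by a Neumann series. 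No gaps.
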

\begin{proof}
Since $sp>2$ we can use the algebra structure \rf{eqAlgebra} and \rf{eqAlgebraPower} to conclude that 
for every $g\in F^s_{p,q}(\Omega)$
\begin{align*}
\norm{\mu^m (\Beurling^m)_\Omega g}_{F^s_{p,q}(\Omega)}
	& \lesssim \norm{\mu^m}_{F^s_{p,q}(\Omega)}\norm{(\Beurling^{m})_\Omega g}_{F^s_{p,q}(\Omega)}\\
	& \lesssim m^{N} \norm{\mu}^{m-1}_{L^\infty}\norm{\mu}_{F^s_{p,q}(\Omega)} \norm{(\Beurling^m)_\Omega}_{F^s_{p,q}(\Omega)\to F^s_{p,q}(\Omega)}\norm{g}_{F^s_{p,q}(\Omega)}.
\end{align*}

By  {  \emph{our third key auxiliary result,}} Theorem \ref{theoIterates} below, there are constants depending on the Lipschitz character of $\Omega$ (and other parameters) but not on $m$, such that 
$$\norm{(\Beurling^m)_\Omega}_{F^s_{p,q}(\Omega)\to F^s_{p,q}(\Omega)} \lesssim m^{2} \norm{\nu}_{B^{s-1/p}_{p,p}(\partial\Omega)}.$$
As a consequence, for $m$ large enough the operator norm $\norm{\mu^m (\Beurling^m)_\Omega}_{F^s_{p,q}(\Omega)\to F^s_{p,q}(\Omega)}<1$ and, thus, $A^{(1)}_m$ in \rf{eqFactorizePm} is invertible.
\end{proof}

Now we can {complete the proof} of Theorem \ref{theoInvertBeltrami} for $0<s<1$ by the usual Fredholm argument as follows.

\begin{proof}[Proof of Theorem \ref{theoInvertBeltrami}]
For $m$ big enough, the restricted Beltrami operator $I_\Omega-(\mu \Beurling_\Omega)^m$ can be expressed using \rf{eqFactorizePm} as the sum of an invertible operator $A^{(1)}_m$ (see Lemma \ref{lemInvertible}) and the compact operator $\mu^m A^{(2)}_m+ A^{(3)}_m$ (its compactness granted in the comments above together with Lemma \ref{lemCompactnessCommutator} and Theorem \ref{theoCompactnessReflection}). By \rf{eqPmToIMu}, we can deduce that $I_\Omega - \mu \Beurling_\Omega$ is a  Fredholm operator (see \cite[Theorem 5.5]{Schechter}). The same argument works with any other operator $I_\Omega-t \mu \Beurling_\Omega$ for $0<t<1/\norm{\mu}_\infty$. It is well known that the Fredholm index is continuous with respect to the operator norm on Fredholm operators (see \cite[Theorem 5.11]{Schechter}), so the index of $I_\Omega-\mu \Beurling_\Omega$ equals that of $I_\Omega$, i.e., $0$.

It only remains to see that our operator is injective in order to obtain its invertibility. Since the Beurling transform is an isometry on $L^2(\C)$ and $\norm{\mu}_\infty<1$, the operator $I-\mu \Beurling$ is injective in $L^2(\C)$. Thus, if $g\in F^s_{p,q}(\Omega)$, and $(I_\Omega - \mu \Beurling_\Omega)g=0$, we define $G(z)= g(z)$ if $z\in\Omega$ and $G(z)=0$ otherwise, and then we have that 
$$(I - \mu \Beurling)G(z)=(I - \mu \chi_\Omega \Beurling)(\chi_\Omega G)(z)=
\begin{cases}
(I_\Omega - \mu \Beurling_\Omega)g(z) =0  & \mbox{when } z\in\Omega \\
0 & \mbox{otherwise}.
\end{cases}$$
 By the injectivity of the first operator, since $G\in L^2$ we get that $G=0$ and, thus, $g=0$ as a function of $F^s_{p,q}(\Omega)$.
 
Now, remember that the principal solution of \rf{eqBeltrami} is $f(z)=\Cauchy h(z)+z$, where
$$h:=(I-\mu \Beurling)^{-1} \mu,$$
that is, $h - \mu \Beurling(h)=\mu$, so $\supp(h)\subset \supp(\mu)\subset \overline{\Omega}$ and, thus, $\chi_\Omega h + \mu \Beurling_\Omega(h)=h + \mu \Beurling(h)=\mu$ modulo null sets, so 
 $$h|_\Omega=(I_\Omega-\mu \Beurling_\Omega)^{-1} \mu,$$
proving that $h|_{\Omega} \in F^s_{p,q}(\Omega)$. By \cite[Theorem 4.3.12]{AstalaIwaniecMartin} we have that $\Cauchy h\in L^p(\C)$. Since the  derivatives of the principal solution, $\overline{\partial}f|_{\Omega}=h|_{\Omega}$ and $\partial f |_{\Omega}= \Beurling_\Omega h+ 1$, are in $F^s_{p,q}(\Omega)$, we have $f\in F^{s+1}_{p,q}(\Omega)$ by \rf{eqExtensionDomainXTRIEBEL}.
\end{proof}

\medskip

{The remaining subsections below are devoted to establishing our {three} key auxiliary results needed,  Lemma \ref{lemCompactnessCommutator} together with  Theorems   \ref{theoCompactnessReflection} and  \ref{theoIterates} below.}

\subsection{Dorronsoro's Betas}\label{secBetas}

Given two sets $A$ and $B$, their {\rm symmetric difference} is $A\Delta B:=(A\cup B) \setminus (A \cap B)$ and their long distance is
\begin{equation}\label{eqLongDistance}
\Dist(A,B):=\diam(A)+\diam(B)+\dist(A,B).
\end{equation}
{Let $\mathcal{D}_d$ stand for a dyadic grid of $\R^d$. }

\begin{definition}\label{defWhitney}
Given a domain $\Omega$, we say that a collection of open dyadic cubes $\mathcal{W}{\subset\mathcal{D}_d}$ is a {\em Whitney covering} of $\Omega$ if the cubes are disjoint,  $\Omega=\bigcup_{Q\in\mathcal{W}}\overline{Q}$, there exists a constant $C_{\mathcal{W}}$ such that 
$$C_\mathcal{W} \ell(Q)\leq \dist(Q, \partial\Omega)\leq 4C_\mathcal{W}\ell(Q),$$
and the family $\{50 Q\}_{Q\in\mathcal{W}}$ has a finite superposition property. Moreover, we will assume that 
\begin{equation*}
S\subset 5Q \implies \ell(S)\geq \frac12 \ell(Q).
\end{equation*}
\end{definition}
The existence of such a covering is granted for any open set different from $\R^d$ and in particular for any domain as long as $C_\mathcal{W}$ is big enough (see \cite[Chapter 1]{SteinPetit} for instance). Note that $C_{\mathcal{W}}$ may be increased if needed for our purposes by dividing each cube into its dyadic sons, for instance.

Jose R. Dorronsoro introduced the following polynomials to study the Besov norms of functions in  \cite{Dorronsoro} (see \cite[Proposition 2.3]{PratsPlanarDomains} for the consistency of this definition):
\begin{definition}
Let $I$ be an interval and let $f\in L^1_{\rm loc}(3I)$. Then,  there exists a unique polynomial $\mathbf{R}^n_I f$ of degree $n$ (or smaller) such that for every $j \in \{0,1,\cdots, n\}$, 
$$\int_I (\mathbf{R}^n_I f -f) x^j\, dm =0,$$
{see Figure \ref{figBeta2}.}
Then, we define
$$\beta_{(n)}(f,I):=\frac{1}{\ell(I)}\int_{3I}\frac{|f(x)-\mathbf{R}^n_I f (x)|}{\ell(I)} \, dm(x).$$
\end{definition}

\begin{figure}[ht]
 \centering
\begin{subfigure}{0.5 \textwidth}
 \centering{\includegraphics[width=\textwidth]{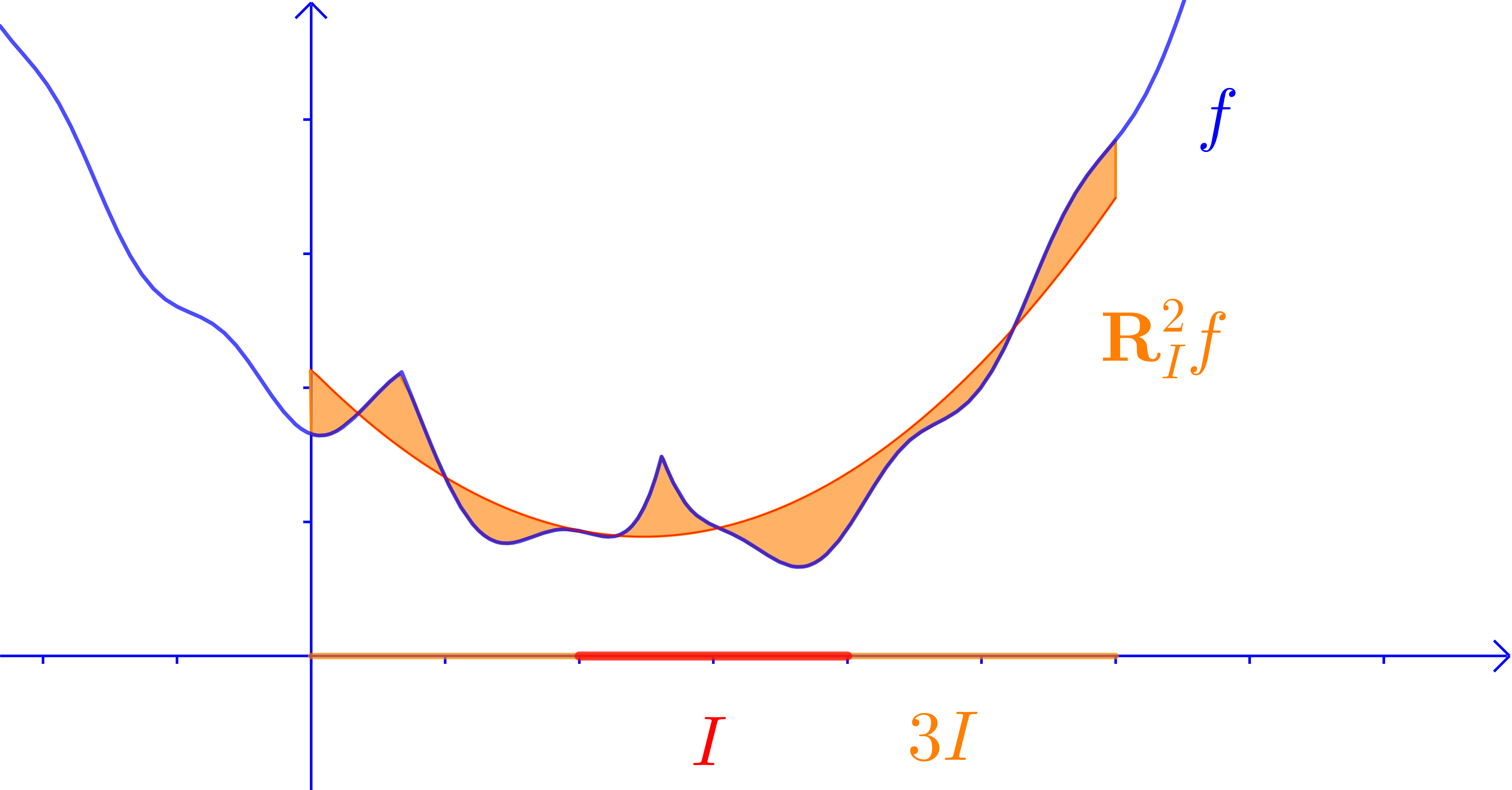}}
\end{subfigure}
\caption{{The minimal normalized area comprised between a degree 2 polynomial and the function $f$ in the interval $3I$ is  $\beta_{(2)}(f,I)$.}}\label{figBeta2}
\end{figure}

The $\beta$-coefficients are closely related to Jones-David-Semmes ones. Namely, if $f$ is Lipschitz and  $n=1$, then $\beta_{(1)} \approx \beta_1$. 
On the other hand, these polynomials satisfy that 
\begin{equation}\label{eqDorronsoroStayAway}
\norm{\mathbf{R}^n_I f}_{L^\infty(I)}\lesssim_n  \ell(I)^{-1} \norm{f}_{L^1(I)}.
\end{equation}

As it was observed in \cite[(2.10)]{PratsPlanarDomains}, one can rewrite \cite[Theorem 1]{Dorronsoro} in terms of these coefficients as follows. 
\begin{lemma}\label{lemDorronsoro}
Let $0<s<n+1$ and $1\leq p< \infty$. Then for every $f\in \dot B^s_{p,p}(\R)$, we have that
$$\norm{f}_{\dot{B}^s_{p,p}}\approx \left(\sum_{I\in\mathcal{D}_1}\left(\frac{\beta_{(n)}(f,I)}{\ell(I)^{s-1}}\right)^p \ell(I)\right)^{\frac1p}.$$
\end{lemma}

We will use the beta coefficients to measure the regularity of a domain. Namely, we measure  in every scale  how far is  each portion of the boundary to be the graph of a polynomial, via a dyadic approach. To make the notation less dense, we will assign the coefficients to the Whitney cubes straight ahead. To do so, we will chose a beta coefficient comparable to the supremum of the betas of the reasonable choices for each cube. In the following definitions and computation we use $\varepsilon_\delta:=\frac{R}{\sqrt{1+\delta^2}}$, which grants that whenever $x<\varepsilon_\delta$, the image $(x,A(x))$ under a parameterization $A$ is a boundary point even if the Lipschitz constant $\delta$ is big. 

Let $\Omega\subset \C$ be a bounded $(\delta,R)$-Lipschitz domain. {We will consider a given} finite collection of boundary points ${\mathcal{X}=}\{x_j \}_{j=1}^M\subset \partial\Omega$ such that $\left\{B\left(x_j,\frac{\varepsilon_\delta}{12}\right)\right\}_{j=1}^M$ is a disjoint family but the double balls cover the boundary. After an appropriate rigid movement  $\tau_j$ (rotation and translation) which maps $x_j$ to the origin, the boundary $\partial (\tau_j\Omega)$ coincides with the graph of a Lipschitz function $A_j$ in the cube $\mathcal{Q}_j=Q(0,R)$, with $A_{j}$ supported in $[-4R,4R]$ and derivative satisfying $\norm{A'}_{L^\infty}\leq\delta$. 

\begin{definition}\label{defBetasBoundary}
Given a cube $Q \in \W_\Omega$ with $\ell(Q)$ small enough, say $\ell(Q)\leq C_\Omega$, we say that a pair $(x_j, J){\in \mathcal{X}\times \mathcal{D}_1}$ is admissible for $Q$ {(see Figure \ref{figBetasSelect})}, writing $(x_j,J) \in \mathcal{J}_Q$ if 
\begin{enumerate}
\item The length  $\ell(J) = \ell(Q)$ and $J\subset [-\frac {\varepsilon_\delta}{3},\frac {\varepsilon_\delta}{3}]$.
\item The image of $J$ under the graph function $\tau_j^{-1}\circ(Id,A_j)$ is a set $U_J\subset \partial\Omega$ with {\emph{long distance}} $\Dist(Q, U_J)\approx \ell(Q)$, {see \rf{eqLongDistance}}.
\end{enumerate}
Then, we assign the number 
$$\beta_{(n)}(Q) := {\min_{(x_j, J)\in \mathcal{J}_Q}} \beta_{(n)}(A_j,J).$$
If the cube is greater than $C_\Omega$, we will assign $\beta_{(n)}(Q):=1$. 
\end{definition}

\begin{figure}[ht]
 \centering
\begin{subfigure}{0.8 \textwidth}
 \centering{\includegraphics[width=\textwidth]{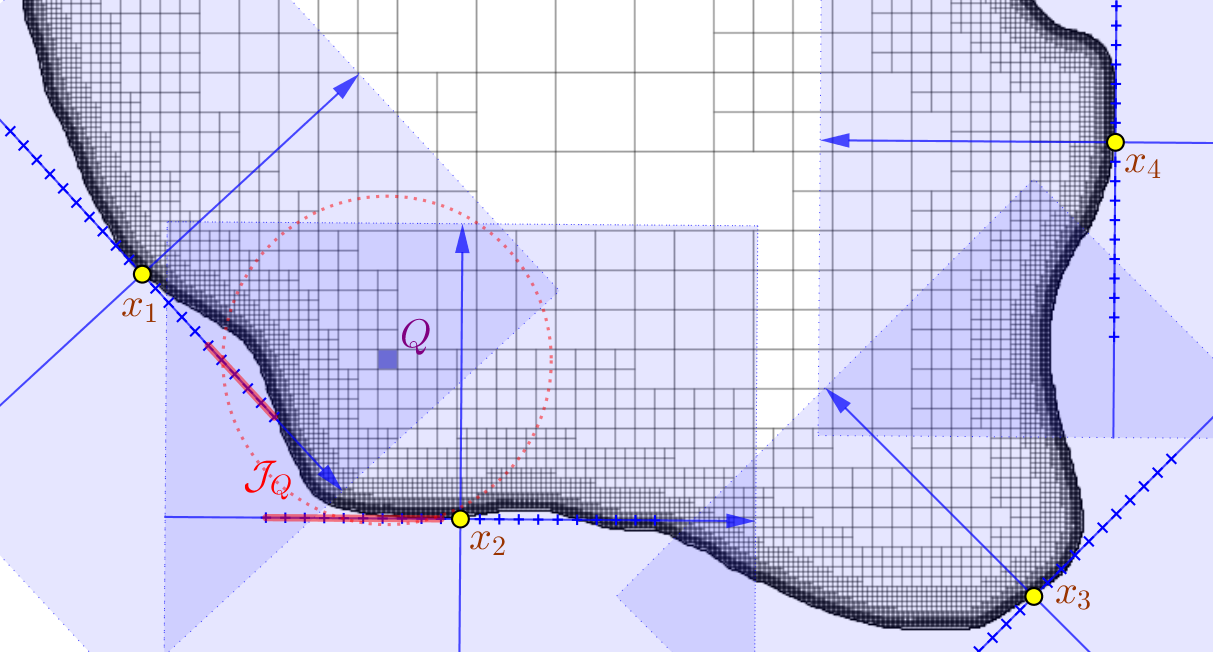}}
\end{subfigure}
\caption{{The candidate intervals in $\mathcal{J}_Q$ can be identified with the bold red segments, see Definition \ref{defBetasBoundary}, which correspond to only a bounded number of different local parameterizations of the boundary. }}\label{figBetasSelect}
\end{figure}

\begin{remark}\label{remUniformBounds}
Note that the number of candidates $J$ above is uniformly bounded in terms of the Lipschitz character and the Whitney constants. At the same time, every interval $J$ can be chosen for a uniformly bounded number of Whitney cubes depending on the same constants.	
\end{remark}

Combining Lemma \ref{lemDorronsoro} with the proof of \cite[(A.1)]{PratsPlanarDomains} and noting that in Definition \ref{defBetasBoundary} one has the uniform bound $\#\mathcal{J}_Q\leq C$, one gets the following lemma:
\begin{lemma}\label{lemNormalVectorBetas}
Let $0<s<1$, $ 1< p<\infty$ with $sp>2$ and let $\Omega$ be a $(\delta,R)$-Lipschitz $B^{s+1-\frac{1}{p}}_{p,p}$-domain. Then
\begin{equation*}
\left(\sum_{Q\in \mathcal{W}: \ell(Q)\leq C_\Omega} \beta_{(1)}(Q)^p \ell(Q)^{2-sp}\right)^\frac1p \lesssim \norm{\nu}_{B^{s-\frac{1}{p}}_{p,p}(\partial\Omega)},
\end{equation*}
with constants depending on the Lipschitz character of $\Omega$ and $\mathcal{H}^1(\partial\Omega)$, where $\nu$ stands for the unit outward normal vector to the boundary of the domain.
\end{lemma}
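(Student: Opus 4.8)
The plan is to reduce the statement to Dorronsoro's characterization of the Besov norm of the Lipschitz defining functions $A_j$, via Lemma \ref{lemDorronsoro}, and then to account for the bookkeeping that relates the Dorronsoro coefficients of the $A_j$ over dyadic intervals to the quantities $\beta_{(n)}(Q)$ attached to the Whitney cubes. First I would recall that, since $\Omega$ is a $F^{s+1-1/p}_{p,p}$-domain, by the discussion following Definition \ref{defTraceSpace} (and Proposition \ref{propoSameAsOldies}) the condition $sp>2$ and $0<s<1$ ensures $F^{s+1-1/p}_{p,p}\subset C^1$, so each local parameterization may be taken with arc-length–comparable speed; in particular the outward normal $\nu$ restricted to the graph of $A_j$ is, up to a smooth bi-Lipschitz reparametrization and a multiplicative normalization, the function $x\mapsto (-A_j'(x),1)/\sqrt{1+A_j'(x)^2}$. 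Using the algebra and composition properties \rf{eqMultiplySmoothInBESO}--\rf{eqInvarianceUnderInversionBESO} together with the fact that $t\mapsto 1/\sqrt{1+t^2}$ is smooth and $|A_j'|\le\delta$, one gets $\|A_j'\|_{F^{s-1/p}_{p,p}(I)}\lesssim \|\nu\|_{F^{s-1/p}_{p,p}(\partial\Omega)}$ on each window interval $I$, hence, since there are finitely many windows, $\sum_j \|A_j\|_{\dot B^{s+1-1/p}_{p,p}}^p \lesssim \|\nu\|_{F^{s-1/p}_{p,p}(\partial\Omega)}^p$ (here I use $F^{\sigma}_{p,p}=B^\sigma_{p,p}$ and the lifting \rf{eqExtensionDomainXBESO} to pass one derivative, being slightly careful that the window cutoffs do not spoil the norm, which follows from \rf{eqMultiplySmoothInBESO}).

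Next I would apply Lemma \ref{lemDorronsoro} to each $A_j$ (extended by its window cutoff, or localized) with smoothness exponent $s+1-\tfrac1p$, which lies in $(0,n+1)$ since $s<n$: this yields
\[
\sum_j \sum_{J\in\mathcal{D}_1}\left(\frac{\beta_{(n)}(A_j,J)}{\ell(J)^{(s+1-1/p)-1}}\right)^p\ell(J)
\;\lesssim\; \sum_j\|A_j\|_{\dot B^{s+1-1/p}_{p,p}}^p
\;\lesssim\; \|\nu\|_{F^{s-1/p}_{p,p}(\partial\Omega)}^p.
\]
Rearranging the exponent, the left-hand summand is $\beta_{(n)}(A_j,J)^p\,\ell(J)^{\,2-sp-(p-1)}\cdot\ell(J)^{p-1}=\beta_{(n)}(A_j,J)^p\,\ell(J)^{2-sp}$. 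So what remains is to compare $\sum_{Q:\ell(Q)\le C_\Omega}\beta_{(n)}(Q)^p\ell(Q)^{2-sp}$ with $\sum_j\sum_J \beta_{(n)}(A_j,J)^p\ell(J)^{2-sp}$. By Definition \ref{defBetasBoundary}, $\beta_{(n)}(Q)=\max_{(x_j,J)\in\mathcal J_Q}\beta_{(n)}(A_j,J)$, and by Remark \ref{remUniformBounds} the correspondence $Q\leftrightarrow (x_j,J)$ has uniformly bounded multiplicity in both directions (bounded by the Lipschitz character and the Whitney constants), while along each admissible pair $\ell(J)=\ell(Q)$. Hence, bounding the max by a sum and swapping the order of summation,
\[
\sum_{Q:\ell(Q)\le C_\Omega}\beta_{(n)}(Q)^p\ell(Q)^{2-sp}
\;\le\; \sum_{Q}\sum_{(x_j,J)\in\mathcal J_Q}\beta_{(n)}(A_j,J)^p\ell(J)^{2-sp}
\;\lesssim\; \sum_j\sum_{J}\beta_{(n)}(A_j,J)^p\ell(J)^{2-sp},
\]
which combined with the previous displayed inequality gives the claim; the cubes with $\ell(Q)>C_\Omega$ are finitely many and contribute a bounded amount absorbed in the $\mathcal H^1(\partial\Omega)$-dependence of the constant (equivalently, $\|\nu\|$ controls such a finite sum trivially since $\|\nu\|$ is bounded below by the $L^p$ part).

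The main obstacle, and the step that deserves the most care, is the bookkeeping between the intrinsic Besov norm $\|\nu\|_{F^{s-1/p}_{p,p}(\partial\Omega)}$ (defined via the global bi-Lipschitz parametrization of $\partial\Omega$, Definition \ref{defTraceSpace}) and the \emph{local} Besov norms of the window functions $A_j'$ over short intervals: one must check that passing to the arc-length parametrization and then reading off the graph coordinates in each rotated window is a bi-Lipschitz change of variables that preserves the $F^{s-1/p}_{p,p}$ norm up to constants — this is exactly where Lemmas \ref{lemBesovAdmissibleSpace} and \ref{lemArcParameter} (and Proposition \ref{propoSameAsOldies}) are invoked — and that the window cutoffs, the finite overlap of windows, and the transition from the full boundary to the finitely many graph pieces all cost only constants depending on the Lipschitz character and $\mathcal H^1(\partial\Omega)$. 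The rest (the Dorronsoro step and the Whitney-to-interval multiplicity count) is essentially mechanical once Remark \ref{remUniformBounds} is in hand.
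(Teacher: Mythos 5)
Your proposal is correct and takes essentially the same route as the paper, whose own proof is Proposition \ref{propoBetasBoundary} in the appendix applied with smoothness parameter $s-\frac1p$: both arguments reduce the estimate to Dorronsoro's characterization (Lemma \ref{lemDorronsoro}) applied to the window graph functions $A_j$, transfer the norm of $\nu$ on $\partial\Omega$ to local Besov norms of the $A_j$ via the composition/localization lemmas, and settle the interval-to-Whitney-cube bookkeeping with Remark \ref{remUniformBounds}. The only cosmetic difference is that the paper recovers the regularity of the $A_j$ through the arc-length parameterization, using $\nu\circ\gamma_0=\pm i\gamma_0'$, the lifting \rf{eqNormalVecNorm} and Lemma \ref{lemLocalize} with $f=Id$, whereas you invert the graph formula $\nu=(-A_j',1)/\sqrt{1+(A_j')^2}$ directly in window coordinates.
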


The $\beta$-coefficients will appear in a natural way along the present section thanks to the following relation introduced in \cite[(7.3)]{CruzTolsa}.  
\begin{lemma}\label{lemBoundForBeurling}
Let $\Omega$ be a bounded $(\delta,R)$-Lipschitz domain and let $\mathcal{W}$ be a  Whitney covering with appropriate constants. Then, for $x,y\in Q\in\mathcal{W}$ with $Q\subset \bigcup_{j=1}^M B\left(x_j,\frac{\varepsilon_\delta}{12}\right)$, there exists a half plane $\Pi_Q$ so that for every $0<\ell_0<R$, the estimate
\begin{equation}\label{eqBoundForBeurling}
\int_{\Omega \Delta \Pi_Q}\frac{1}{|z-x|^{2+\eta}} dm(z) \lesssim \left( \sum_{\substack{P\in \mathcal{W}: \rho P \supset Q\\ \ell(P)\leq \ell_0}} \frac{\beta_{(1)}(P)}{\ell(P)^\eta} + \frac{1}{\ell_0^\eta}\right)
\end{equation}
holds, with the constant $\rho>1$ depending only on the Lipschitz character of the domain and $\eta >0$.
\end{lemma}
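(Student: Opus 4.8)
The plan is to make the half-plane $\Pi_Q$ explicit as the graph of a single Dorronsoro affine polynomial at the scale of $Q$, and then to estimate the integral by slicing $\Omega\Delta\Pi_Q$ according to the horizontal distance to $x$, summing a local term, a telescoping family of annular terms across the intermediate scales, and a crude far tail. Concretely, after the rigid movement $\tau_j$ attached to the window containing $Q$ I would assume $x_j=0$ and that inside $\mathcal{Q}_j=Q(0,R)$ the boundary is the graph of $A:=A_j$ (with $A(0)=0$, $\|A'\|_\infty\le\delta$), writing $x=(x_1,x_2)$ with $x_2-A(x_1)\approx\ell(Q)=:\ell$ by the Whitney property. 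I would pick nested intervals $x_1\in J_0\subset J_1\subset\cdots$ with $\ell(J_k)\approx 2^k\ell$, each admissible (Definition \ref{defBetasBoundary}) for a Whitney cube $P_k$ lying over it with $\ell(P_k)\approx 2^k\ell$; since $\dist(\mathrm{center}(P_k),x)\lesssim 2^k\ell\approx\ell(P_k)$, one has $\rho P_k\supset Q$ for a fixed $\rho$, so every $P_k$ contributes to the right-hand side of \rf{eqBoundForBeurling}. Then I would set $L_k:=\mathbf{R}^1_{J_k}A$ and take $\Pi_Q:=\tau_j^{-1}\{(t,u):u>L_0(t)\}$.

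Next I would slice $\Omega\Delta\Pi_Q$ by the abscissa $t$ of $z=(t,u)$. Inside the window, membership in $\Omega\Delta\Pi_Q$ forces $u$ between $A(t)$ and $L_0(t)$, so the thickness of the slice at abscissa $t$ is $\le|A(t)-L_0(t)|$, while elementary geometry (with $x$ at height $\approx\ell$ above the graph) gives $|z-x|\gtrsim\ell+|t-x_1|$. The slice over $\{|t-x_1|\lesssim\ell\}$ then contributes $\lesssim\ell^{-2-\eta}\int_{3J_0}|A-L_0|\lesssim\beta_{(1)}(A,J_0)\,\ell^{-\eta}\le\beta_{(1)}(P_0)/\ell(P_0)^\eta$ directly from the definition of $\beta_{(1)}$. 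For the annulus $|t-x_1|\approx 2^k\ell$ I would write $|A-L_0|\le|A-L_k|+|L_k-L_0|$: the first term again gives $\lesssim\beta_{(1)}(P_k)/\ell(P_k)^\eta$, and for the second I would use the key telescoping estimate $\|L_{i+1}-L_i\|_{L^\infty(J_i)}\lesssim\beta_{(1)}(P_i)\,\ell(J_i)$ (two nested affine $L^1$-approximations of $A$, combined with \rf{eqDorronsoroStayAway}; up to comparable-scale betas), which propagates by the linear growth of affine functions to $\|L_k-L_0\|_{L^\infty(3J_k)}\lesssim 2^k\ell\sum_{i<k}\beta_{(1)}(P_i)$, hence a contribution $\lesssim(2^k\ell)^{-\eta}\sum_{i<k}\beta_{(1)}(P_i)$. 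Summing in $k$ and exchanging the order of summation, the geometric factor $\sum_{k>i}(2^k\ell)^{-\eta}\lesssim(2^i\ell)^{-\eta}$ (here $\eta>0$ is essential) collapses this double sum into $\lesssim\sum_i\beta_{(1)}(P_i)/\ell(P_i)^\eta$.

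For the remaining far region $|t-x_1|\gtrsim\min(\ell_0,\varepsilon_\delta)$ I would argue crudely: once $|t-x_1|\gtrsim\ell_0$ one has $|z-x|\gtrsim\ell_0$ and $\int_{|z-x|\gtrsim\ell_0}|z-x|^{-2-\eta}\,dm\lesssim\ell_0^{-\eta}$; a possible corridor $\ell_0<|t-x_1|<\varepsilon_\delta$ still inside the window is handled without $\beta$'s, using that $A$ and $L_0$ are both $O(\delta)$-Lipschitz and $O(\delta\ell)$-close at $x_1$, so $|A(t)-L_0(t)|\lesssim\delta|t-x_1|$ and the integral is $\lesssim\delta\int_{\ell_0}^\infty r^{-1-\eta}\,dr\lesssim\ell_0^{-\eta}$; and if $\ell_0>\varepsilon_\delta$, the part beyond $\varepsilon_\delta$ contributes $\lesssim\varepsilon_\delta^{-\eta}$, which is absorbed by the $\beta_{(1)}(P)=1$ terms (large-cube convention in Definition \ref{defBetasBoundary}) of the $O(1)$ cubes $P$ at each scale in $(\varepsilon_\delta,\ell_0]$ with $\rho P\supset Q$. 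Adding the finitely many regimes yields \rf{eqBoundForBeurling}; alternatively one can simply quote \cite[(7.3)]{CruzTolsa}, whose proof is exactly this computation.

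I expect the main obstacle to be the multi-scale affine bookkeeping of the second paragraph: constructing the nested admissible intervals $J_k$ and matching them to Whitney cubes $P_k$ with $\rho P_k\supset Q$, proving $\|L_{i+1}-L_i\|_{L^\infty(J_i)}\lesssim\beta_{(1)}(P_i)\ell(J_i)$ together with its controlled (merely linear) growth on $3J_k$, and then reorganizing the resulting double sum so that it is dominated — without over-counting — by $\sum_{P\in\mathcal{W}:\ \rho P\supset Q,\ \ell(P)\le\ell_0}\beta_{(1)}(P)/\ell(P)^\eta$, for which the bounded-multiplicity statements of Remark \ref{remUniformBounds} are precisely what is needed. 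The kernel estimates and the far-tail bounds are routine.
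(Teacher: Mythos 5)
Your argument is correct in substance, but it is worth knowing that the paper does not actually prove this lemma: it is imported wholesale from \cite[(7.3)]{CruzTolsa}, and the only proof content supplied in the text is the specification of $\Pi_Q$ — the half-plane containing $Q$ whose boundary minimizes $\beta_{(1)}(Q)$, with the Whitney constants enlarged so that $\dist(Q,\Pi_Q^c)\approx\ell(Q)$ — which agrees, up to constants, with your choice of the graph of the Dorronsoro affine polynomial $L_0=\mathbf{R}^1_{J_0}A_j$ (note that you need the same enlargement of Whitney constants to guarantee $|z-x|\gtrsim\ell(Q)$ on the symmetric difference, exactly as the paper remarks). So your multi-scale telescoping reconstruction — slicing by abscissa, bounding the local slice by $\beta_{(1)}(P_0)/\ell(P_0)^\eta$, splitting $|A-L_0|\le|A-L_k|+|L_k-L_0|$ on the annulus at scale $2^k\ell$, the estimate $\norm{L_{i+1}-L_i}_{L^\infty(J_i)}\lesssim(\beta_{(1)}(P_i)+\beta_{(1)}(P_{i+1}))\,\ell(J_i)$ with linear propagation of affine functions, and the interchange of sums using $\eta>0$ — is essentially the proof behind the cited estimate rather than a divergence from the paper; your closing option of simply quoting Cruz--Tolsa is literally what the authors do. Two small simplifications in your far-field discussion: the "corridor" $\ell_0<|t-x_1|<\varepsilon_\delta$ is already covered by the crude bound $\int_{|z-x|\gtrsim\ell_0}|z-x|^{-2-\eta}\,dm\lesssim\ell_0^{-\eta}$, and the absorption of the $\varepsilon_\delta^{-\eta}$ tail into "$\beta_{(1)}(P)=1$ cubes" is unnecessary (and somewhat delicate, since the existence of such cubes with $\rho P\supset Q$ at those scales needs an argument): since the hypothesis gives $\ell_0<R$ and $\varepsilon_\delta=R/\sqrt{1+\delta^2}$, one has $\varepsilon_\delta^{-\eta}\le(1+\delta^2)^{\eta/2}\ell_0^{-\eta}$, so this term is absorbed directly into $\ell_0^{-\eta}$ with a constant depending only on the Lipschitz character and $\eta$, as the statement allows.
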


Note that the condition $\rho P\supset Q$ in the last sum above, implies that the cubes $P$ cannot be much smaller than $Q$, namely $\ell(P)\geq \frac{\ell(Q)}{\rho}$, and thus the number of cubes $P$ on a given scale  stays uniformly bounded for any given $Q$. Essentially $\Pi_Q$ is a half-plane whose boundary coincides with the minimizer for $\beta_{(1)}(Q)$. To be precise, we choose $(x_j,J) \in \mathcal{J}_Q$ and we choose $\Pi_Q$ so that it contains $Q$ and $\tau_j\partial \Pi_Q$ minimizes $\beta_{(n)}(A_j,J)$, see Definition \ref{defBetasBoundary}. Above we chose the Whitney constants big enough so that that $\dist(Q,\Pi_Q^c)\approx \ell(Q)$. Note that in case $\ell(Q)\geq C_\Omega$ we can chose any half-plane whose boundary is at distance from $Q$ comparable to $\ell(Q)$.

To end with beta coefficients, we write the following lemma, which will be used several times along the text.
\begin{lemma}\label{lemSummingTheBetas}
Let $\Omega\subset \C$ be a bounded $B^{s+1-\frac1p}_{p,p}$-domain, with $0<s<1$, $1<p<\infty$ and $sp>2$. For $\ell>s$, we have that
$$\sum_{Q\in\W} \ell(Q)^{2+(\ell-s)p}  \left(\left(\sum_{\rho P \supset Q: \ell(P)\leq 2R} \frac{\beta_{(1)}(P)}{\ell(P)^\ell}\right)^p + 1\right)\lesssim  \norm{\nu}_{B^{s-\frac1p}_{p,p}(\partial\Omega)}^p,$$
with constants depending on $\ell,s,p$, the Whitney constants, the Lipschitz character of the domain and its diameter. 
\end{lemma}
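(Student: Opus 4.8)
The plan is to separate the left-hand side into the genuine $\beta$-contribution $\mathcal{S}_1:=\sum_{Q\in\W}\ell(Q)^{2+(\ell-s)p}\big(\sum_{\rho P\supset Q,\ \ell(P)\le 2R}\beta_{(1)}(P)\ell(P)^{-\ell}\big)^p$ and the trivial tail $\mathcal{S}_2:=\sum_{Q\in\W}\ell(Q)^{2+(\ell-s)p}$, and bound both by $\norm{\nu}_{F^{s-1/p}_{p,p}(\partial\Omega)}^p$. For $\mathcal{S}_2$ one uses that, $\Omega$ being a bounded Lipschitz domain, the boundary collar $\{0<\delta_\Omega<t\}$ has measure $\lesssim t\,\mathcal{H}^1(\partial\Omega)$ for small $t$, so the number of Whitney cubes of side $2^{-k}$ is $\lesssim 2^k\mathcal{H}^1(\partial\Omega)+O(1)$; since $\ell>s$ every exponent that appears is positive, so the resulting geometric series converges and $\mathcal{S}_2\lesssim_{\diam\Omega,\mathcal{H}^1(\partial\Omega),\ell,s,p}1$. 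On the other hand, by Definition \ref{defTraceSpace}, $\norm{\nu}_{F^{s-1/p}_{p,p}(\partial\Omega)}=\norm{\nu\circ\gamma}_{F^{s-1/p}_{p,p}(\T)}\ge\norm{\nu\circ\gamma}_{L^p(\T)}=(2\pi)^{1/p}$, since $\nu$ is a unit vector; hence $\mathcal{S}_2\lesssim\norm{\nu}_{F^{s-1/p}_{p,p}(\partial\Omega)}^p$.

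For the main term write $a_P:=\beta_{(1)}(P)\ell(P)^{-\ell}$ and $b_Q:=\sum_{P:\,\rho P\supset Q,\ \ell(P)\le 2R}a_P$, so $\mathcal{S}_1=\sum_{Q\in\W}\ell(Q)^{2+(\ell-s)p}b_Q^p$. The structural input is the one recorded after Lemma \ref{lemBoundForBeurling}: $\rho P\supset Q$ forces $\ell(P)\ge\ell(Q)/\rho$, and for each fixed $Q$ the number of such $P$ of any prescribed dyadic side-length is bounded by a constant depending only on the Lipschitz and Whitney characters. Fix $\delta>0$ with $\delta<\ell-s$. Hölder's inequality with the weight $\ell(P)^{\pm\delta}$ gives
\[
b_Q\le\Big(\sum_{P:\,\rho P\supset Q,\ \ell(P)\le 2R}a_P^p\,\ell(P)^{\delta p}\Big)^{1/p}\Big(\sum_{P:\,\rho P\supset Q,\ \ell(P)\le 2R}\ell(P)^{-\delta p'}\Big)^{1/p'},
\]
and the last factor is a geometric series over the dyadic scales in $[\ell(Q)/\rho,2R]$ with boundedly many cubes per scale, hence dominated by its top scale $\ell(Q)/\rho$, so it is $\lesssim_\rho\ell(Q)^{-\delta p'}$. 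Thus $b_Q^p\lesssim\ell(Q)^{-\delta p}\sum_{P:\,\rho P\supset Q,\ \ell(P)\le 2R}a_P^p\,\ell(P)^{\delta p}$.

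Plugging this bound in and exchanging the order of summation,
\[
\mathcal{S}_1\lesssim\sum_{P:\,\ell(P)\le 2R}a_P^p\,\ell(P)^{\delta p}\sum_{Q:\,\rho P\supset Q}\ell(Q)^{\alpha},\qquad \alpha:=2+(\ell-s)p-\delta p>2,
\]
where $\alpha>2$ exactly because $\delta<\ell-s$. The Whitney cubes $Q\subset\rho P$ satisfy $\ell(Q)\le\rho\,\ell(P)$ and number $\lesssim(\rho\,\ell(P)2^{j})^{2}$ at scale $2^{-j}$, so the inner sum is a geometric series with exponent $2-\alpha<0$, dominated by the largest admissible cubes, and thus $\lesssim_\rho\ell(P)^{\alpha}$. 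Since $\delta p+\alpha=2+(\ell-s)p$ and $a_P^p=\beta_{(1)}(P)^p\ell(P)^{-\ell p}$, this yields $\mathcal{S}_1\lesssim\sum_{P:\,\ell(P)\le 2R}\beta_{(1)}(P)^p\,\ell(P)^{2-sp}$. Splitting at $\ell(P)=C_\Omega$: the part $\ell(P)\le C_\Omega$ is $\lesssim\norm{\nu}_{F^{s-1/p}_{p,p}(\partial\Omega)}^p$ by Lemma \ref{lemNormalVectorBetas} applied with $n=1$ (legitimate since $0<s<1$), while the part $C_\Omega<\ell(P)\le 2R$ has boundedly many terms, each with $\beta_{(1)}(P)=1$, hence is $\lesssim_\Omega 1\lesssim\norm{\nu}_{F^{s-1/p}_{p,p}(\partial\Omega)}^p$ as for $\mathcal{S}_2$. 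Combining the estimates for $\mathcal{S}_1$ and $\mathcal{S}_2$ completes the proof.

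The proof contains no single deep estimate; the delicate point is the scale bookkeeping. One must place the auxiliary exponent $\delta$ in the correct range so that the two geometric series converge on opposite ends — toward the scale of $Q$ in the sum over $P\succ Q$, and toward the largest cubes in the sum over $Q\subset\rho P$ — and one must keep track of the truncation $\ell(P)\le 2R$ together with the finitely many oversized Whitney cubes so that both Lemma \ref{lemNormalVectorBetas} and the elementary counting bounds apply. Apart from that, the only external ingredients are Lemma \ref{lemNormalVectorBetas}, the uniform scale-counting consequences of $\rho P\supset Q$, and the lower bound $\norm{\nu}_{F^{s-1/p}_{p,p}(\partial\Omega)}\gtrsim 1$.
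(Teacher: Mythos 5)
Your argument is essentially the paper's own proof: the same H\"older step with an auxiliary exponent $\delta\in(0,\ell-s)$ (the paper's $\varepsilon$), the same exchange of the sums over $Q$ and $P$, the same geometric-series counting to get $\sum_{Q\subset\rho P}\ell(Q)^{2+(\ell-s-\delta)p}\lesssim\ell(P)^{2+(\ell-s-\delta)p}$, and the final reduction to $\sum_P\beta_{(1)}(P)^p\ell(P)^{2-sp}\lesssim\norm{\nu}_{F^{s-1/p}_{p,p}(\partial\Omega)}^p$; your explicit handling of the $+1$ term and of the oversized cubes, and the lower bound $\norm{\nu}_{F^{s-1/p}_{p,p}(\partial\Omega)}\gtrsim 1$, are exactly what the paper absorbs into its constants. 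The one point to fix is the citation in the last step: Lemma \ref{lemNormalVectorBetas} is stated under $sp>2$, whereas the present lemma only assumes $sp>1$, so for $1<sp\leq 2$ you should invoke Proposition \ref{propoBetasBoundary} instead (as the paper does parenthetically); also note your area-based count of the cubes $Q\subset\rho P$ needs the exponent $>2$, which your choice $\delta<\ell-s$ guarantees, while the paper's one-dimensional count via Remark \ref{remUniformBounds} would even allow exponents $>1$.
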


\begin{proof}
Fix $\varepsilon < \ell-s$. Then
\begin{align*}
\squared{S}
\nonumber	& := \sum_{Q\in\W} \ell(Q)^{2+(\ell-s)p} \left(\left(\sum_{\rho P \supset Q: \ell(P)\leq 2R} \frac{\beta_{(1)}(P)}{\ell(P)^\ell}\right)^p + 1\right)\\
\nonumber	& \lesssim  \left(C_\Omega^{2+(\ell-s)p} +\sum_{Q} \ell(Q)^{2+(\ell-s)p}  \sum_{\rho P \supset Q} \frac{\beta_{(1)}(P)^p}{\ell(P)^{(\ell-\varepsilon )p}} \left(\frac{1}{\ell(Q)^{\varepsilon p'}}\right)^{\frac{p}{p'}}\right)\\
	& \lesssim  \left(C_{\Omega,\ell-s,p} + \sum_{P} \frac{\beta_{(1)}(P)^p}{\ell(P)^{(\ell-\varepsilon )p}} \sum_{Q \subset \rho P} \ell(Q)^{2+(\ell-s-\varepsilon )p} \right) .
\end{align*}
Now, using Remark \ref{remUniformBounds}, for $P\in\mathcal{W}$, since $2+(\ell-s-\varepsilon )p>2$  we have that
$$ \sum_{Q\subset \rho P} \ell(Q)^{2+(\ell-s-\varepsilon )p}   \approx \sum_{J\subset I: I \in \mathcal{J}(P)} \ell(J)^{2+(\ell-s-\varepsilon )p}  \leq \sum_{I \in \mathcal{J}(P)}\ell(I)^{2+(\ell-s-\varepsilon )p} \approx\ell(P)^{2+(\ell-s-\varepsilon )p}.$$
 Thus, by  Lemma \ref{lemNormalVectorBetas} we get
\begin{align*}
\squared{S}
	& \lesssim \left(C_{\Omega,\ell-s,p} + \sum_{P} \frac{\beta_{(1)}(P)^p}{\ell(P)^{sp-2}} \right)
	\leq C_{\Omega,\ell-s,p}   \norm{\nu}_{B^{s-\frac1p}_{p,p}(\partial\Omega)}^p.
\end{align*}
\end{proof}
\begin{remark}Since we are in a Lipschitz domain, it is enough $2+(\ell-s-\varepsilon )p>1$, see \cite[Lemma 3.12]{PratsTolsa}. Thus, the preceding lemma holds in fact whenever $\ell>s-\frac1p$. 
\end{remark}

\subsection{Boundedness of the truncated iterates}\label{secIterates}
Consider $1<p<\infty$, $sp>2$, and $0<s< 1$ and let $\Omega$ be a bounded $B^{s+1-\frac1p}_{p,p}$-domain. Victor Cruz and Xavier Tolsa showed that for every $f\in F^s_{p,2}(\Omega)$ we have that
\begin{equation*}
\norm{\Beurling(\chi_\Omega f)}_{F^s_{p,2}(\Omega)}\leq C \norm{\nu}_{B^{s-1/p}_{p,p}(\partial\Omega)}\norm{f}_{F^s_{p,2}(\Omega)},
\end{equation*}
where $C$ depends on $p$, $s$, $\diam(\Omega)$ and the Lipschitz character of the domain.
 (see \cite[Corollary 1.3]{CruzTolsa}).

Nevertheless, this estimate is not enough, since we need to estimate the iterates of the Beurling transform, that is, Theorem \ref{theoIterates} below. Moreover, we are dealing with the larger Triebel-Lizorkin scale (with values other than $2$ allowed for $q$ as well). Thus, we proceed to give a quantitative control of $\norm{(\Beurling^m)_\Omega}_{F^s_{p,q}(\Omega)}$. The following is a fractional version of what the second author got in \cite{PratsPlanarDomains}. 
\begin{theorem}\label{theoIterates}
Consider $m\in\N$,  $0<s< 1$ and $1<p,q<\infty$ with $sp>2$,  and let $\Omega$ be a bounded $B^{s+1-\frac1p}_{p,p}$-domain. Then, for every $f\in F^s_{p,q}(\Omega)$ we have that
\begin{equation*}
\norm{\Beurling^m(\chi_\Omega f)}_{F^s_{p,q}(\Omega)}\leq C m^2 \norm{\nu}_{B^{s-1/p}_{p,p}(\partial\Omega)} \norm{f}_{F^s_{p,q}(\Omega)},
\end{equation*}
where $C$ depends on $s$, $p$, $q$, $\diam(\Omega)$ and the Lipschitz character of the domain but not on $m$.
\end{theorem}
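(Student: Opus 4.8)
The strategy is to reduce the estimate for $\Beurling^m$ on the domain to a combination of three ingredients: (i) the known single-step bound of Cruz--Tolsa, (ii) the Whitney-cube description of $F^s_{p,q}(\Omega)$ from Theorems \ref{theoNormOmegapgtrq} and \ref{theoNormOmegapsmallerq}, and (iii) a careful tracking of how many powers of $m$ appear when the Beurling transform is iterated. The starting point is the elementary but crucial fact that $\Beurling$ is a Calder\'on--Zygmund operator with $\norm{\Beurling}_{L^2\to L^2}=1$ (and $\norm{\Beurling^m}_{L^p\to L^p}$ grows at most polynomially in $m$), so the difficulty is entirely localized near $\partial\Omega$: away from the boundary the kernel of $\Beurling^m$ is still a nice convolution kernel and the Whitney-cube norm contribution is controlled by interior estimates of Caccioppoli/reverse-H\"older type. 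Thus I would split $\norm{\Beurling^m(\chi_\Omega f)}_{F^s_{p,q}(\Omega)}$ using the characterizations above into an $L^p$ part (handled directly since $\Beurling^m$ is bounded on $L^p$ with polynomial growth) plus the oscillation sum over Whitney cubes, and for each cube $Q$ estimate the local oscillation of $\Beurling^m(\chi_\Omega f)$ on $Q$.

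\textbf{Key steps.} First, write $\Beurling^m(\chi_\Omega f)=\sum_{k=0}^{m-1}\Beurling^{m-k}\big(\chi_\Omega\,\Beurling^k(\chi_\Omega f)-\chi_\Omega\Beurling^{k}\!\big)$-type telescoping, or more efficiently set $g_k:=(\Beurling_\Omega)^k f$ and use $\Beurling^m(\chi_\Omega f)=\Beurling\big(\chi_{\Omega}\Beurling^{m-1}(\chi_\Omega f)\big)+\Beurling\big(\chi_{\Omega^c}\Beurling^{m-1}(\chi_\Omega f)\big)$, isolating the part of $\Beurling^{m-1}(\chi_\Omega f)$ living outside $\Omega$; the outer part is where the reflection structure and the $\beta$-coefficients of the boundary enter. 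Second, for the local oscillation on a Whitney cube $Q$, decompose the domain of integration into $\Omega$ near $Q$ (where one uses the flatness estimate, Lemma \ref{lemBoundForBeurling}, to replace $\chi_\Omega$ by the half-plane $\chi_{\Pi_Q}$ up to an error measured by $\sum \beta_{(1)}(P)/\ell(P)^\eta$) and the far part (where the kernel is smooth and standard Calder\'on--Zygmund oscillation bounds apply). Third, sum over Whitney cubes using Lemma \ref{lemSummingTheBetas} (and Lemma \ref{lemmaximal} for the geometric series) to see that each boundary-flatness contribution is controlled by $\norm{\nu}_{F^{s-1/p}_{p,p}(\partial\Omega)}$; the inductive/telescoping structure in $m$ then produces a factor that one checks to be at most $C m^2$ — one power of $m$ from the $L^p\to L^p$ norm of the ``bulk'' iterate and one more from summing the $m$ boundary-correction terms, each of which is itself of size $O(1)\cdot\norm{\nu}$. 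The chain-of-Whitney-cubes machinery \eqref{eqChainDistances}--\eqref{eqChainNumberOfCubesUniformlyBounded} is used to transfer oscillation bounds from cube to cube when estimating $\Beurling^{m-k}$ applied to a function supported near the boundary.

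\textbf{Main obstacle.} The hard part will be obtaining the \emph{uniform-in-$m$} constants, i.e. showing that the polynomial bound $m^2$ (rather than something exponential) is correct. A naive iteration of the single-step Cruz--Tolsa bound $\norm{\Beurling_\Omega}_{F^s_{p,q}\to F^s_{p,q}}\lesssim \norm{\nu}_{F^{s-1/p}_{p,p}(\partial\Omega)}$ only gives $\norm{\nu}^m$, which is useless because $\norm{\nu}$ need not be small; the whole point is that the \emph{$L^\infty$ / smoothing} part of the iterate can be separated from the boundary-regularity part, exactly as in the algebra-type decomposition used for $A^{(1)}_m$ in the outline of Theorem \ref{theoInvertBeltrami}. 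Concretely one must show that $\Beurling^m(\chi_\Omega f)$ differs from the globally-defined (and hence $L^p$-polynomially-bounded) function $\Beurling^m(\chi_\Omega f)$ restricted to $\Omega$ only through boundary commutator terms $[\chi_\Omega,\Beurling]$, each of which gains the $\beta$-coefficient structure and is therefore bounded by $\norm{\nu}$ \emph{without} an accumulating power; bookkeeping the exact number of such terms (there are $O(m)$ of them, each contributing one more $O(m)$ from the $L^p$ operator norm of the remaining iterate) is what yields $m^2$. This is the step that uses, in an essential way, the fractional version of the estimates from \cite{PratsPlanarDomains} together with Lemmas \ref{lemNormalVectorBetas}, \ref{lemBoundForBeurling} and \ref{lemSummingTheBetas}.
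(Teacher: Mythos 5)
Your plan does not go through as sketched; the gap is in the reduction itself. The telescoping you propose, $(\Beurling^m)_\Omega f=(\Beurling_\Omega)^m f+\sum_{k=1}^{m-1}\chi_\Omega\Beurling^{m-k}\bigl(\chi_{\Omega^c}\Beurling(\chi_\Omega(\Beurling_\Omega)^{k-1}f)\bigr)$, reintroduces the truncated iterates $(\Beurling_\Omega)^{k-1}$ inside each boundary-correction term, so the only available bound for that inner factor is the iterated single-step Cruz--Tolsa estimate, which is $\norm{\nu}^{k-1}$ --- exactly the exponential accumulation you set out to avoid. Moreover, each correction term is a reflection-type operator $\chi_\Omega\Beurling^{m-k}(\chi_{\Omega^c}\Beurling(\chi_\Omega\,\cdot))$, and your assertion that such a term is ``of size $O(1)\cdot\norm{\nu}$'' in $F^s_{p,q}(\Omega)$ is precisely what is not known at this stage: in the paper the boundedness of the reflections $\mathcal{R}_m$ is \emph{deduced from} Theorem \ref{theoIterates} (via $A^{(2)}_m=\mathcal{R}_{m-1}+\Beurling_\Omega\circ A^{(2)}_{m-1}$), and their finer smoothing property (Proposition \ref{lemSmallNormCloseToBoundary}) is the hardest part of the whole paper. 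So your bookkeeping ``$O(m)$ terms, each $O(m)\cdot\norm{\nu}$'' rests on unproved uniform bounds and, as written, would be circular or exponential.

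The paper's actual route avoids any splitting of the iterate: it applies a $T(1)$-type theorem for uniform domains (\cite[Remark 5.7]{PratsSaksman}) directly to the operator $\Beurling^m$, which bounds $\norm{(\Beurling^m)_\Omega}_{F^s_{p,q}(\Omega)\to F^s_{p,q}(\Omega)}$ by the Calder\'on--Zygmund kernel constants of $\Beurling^m$ (these grow like $m^2$, since the kernel is $\frac{m}{\pi}\frac{\bar z^{m-1}}{z^{m+1}}$), the global norms of $\Beurling^m$ on $F^s_{p,q}$, $L^p$, $L^q$, and the single quantity $\norm{\Beurling^m\chi_\Omega}_{\dot F^s_{p,q}(\Omega)}$. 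Only in that last quantity does the boundary geometry enter, and there the test function is the constant $1$: this is what makes your half-plane idea work, because $\Beurling^m\chi_{\Pi_Q}$ is constant on $\Pi_Q$, so the oscillation on a Whitney cube reduces to an integral over $\Omega\Delta\Pi_Q$, controlled by Lemma \ref{lemBoundForBeurling} and summed with Lemma \ref{lemSummingTheBetas} (this is Lemma \ref{lemBeurlingIteratesCharacteristic}). For a general $f$ one cannot simply ``replace $\chi_\Omega$ by $\chi_{\Pi_Q}$'' in $\Beurling^m(\chi_\Omega f)$, since $\Beurling^m(\chi_{\Pi_Q}f)$ has no such rigidity; the passage from general $f$ to the constant is exactly what the $T(1)$ theorem supplies, and your sketch is missing that step (or an equivalent localization $f=f_Q+(f-f_Q)\varphi_Q+(f-f_Q)(1-\varphi_Q)$ with the far part handled along Whitney chains, which amounts to reproving it). Incidentally, the factor $m^2$ in the paper comes from the kernel constants of $\Beurling^m$, not from ``number of boundary terms times the $L^p$ norm of the bulk iterate.''
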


{During the last 40 years, {research on Calder\'on-Zygmund theory} has produced a number of $T1$ and $Tb$ theorems, which consist in reducing the boundedness of an operator $T$ in a function space to fact that $T1$ (or $T(b)$ for a certain bounded function $b$) belongs to that space. The second and third authors of the present manuscript obtained a $T(1)$ Theorem in \cite{PratsSaksman} in the framework of Triebel-Lizorkin spaces on domains, which allows us to reduce the proof of Theorem \ref{theoIterates} above to checking the behavior of the operators on the constant functions.}

\begin{lemma}\label{lemBeurlingIteratesCharacteristic}
Consider $m\in\N$, let $0<s<1$, $1<p,q<\infty$ with $sp>2$, let $\Omega$ be a bounded $B^{s+1-\frac{1}{p}}_{p,p}$-domain. Then $\Beurling^m \chi_\Omega \in F^s_{p,q}(\Omega)$ and, moreover, 
$$\norm{\Beurling^m \chi_\Omega}_{{\dot{ F}}^s_{p,q}(\Omega) }\lesssim {m^2}\norm{\nu}_{B^{s-\frac{1}{p}}_{p,p}(\partial\Omega)},$$
the constant depending only on the indices {$s$, $p$ and $q$}, the Lipschitz character of the domain and its diameter.
\end{lemma}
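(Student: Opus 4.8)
The plan is to estimate $G_m:=\Beurling^m\chi_\Omega$ cube by cube on a Whitney covering $\W$ of $\Omega$, using that $G_m$ is real-analytic on $\Omega$ (since $\chi_\Omega$ is locally constant there, one checks by induction that $\Beurling^j\chi_\Omega|_\Omega$ is a $\bar z$-polynomial with holomorphic coefficients) and comparing $\Omega$ with the half-planes $\Pi_Q$ from Lemma \ref{lemBoundForBeurling}. First I would reduce to $q\le p$: when $q>p$ we have $F^s_{p,p}(\Omega)\subset F^s_{p,q}(\Omega)$ by Proposition \ref{theoTriebelEmbeddingsOmega}, so it is enough to bound the $F^s_{p,p}(\Omega)$-norm. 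Then, for $0<s<1$, the intrinsic characterization of Theorem \ref{theoNormOmegapgtrq} (with $k=0$ and some $\rho<1$) reduces the task to bounding $\norm{G_m}_{L^p(\Omega)}$ together with $\sum_{Q\in\W}\norm{\nabla G_m}_{L^\infty(Q^*)}^p\,\ell(Q)^{2+(1-s)p}$, where $Q^*$ is a fixed dilate of $Q$: indeed for $y\in B(x,\rho\delta_\Omega(x))$, which lies in $Q^*$ once $\rho<1$, analyticity gives $|G_m(x)-G_m(y)|\le|x-y|\,\norm{\nabla G_m}_{L^\infty(Q^*)}$, and $\int_{B(x,\rho\delta_\Omega(x))}|x-y|^{q-sq-2}\,dy\lesssim\delta_\Omega(x)^{(1-s)q}$ because $s<1$. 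Using $\rho<1$ here is what lets us avoid any chaining of Whitney cubes.

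The core is the pointwise estimate $\norm{\nabla G_m}_{L^\infty(Q^*)}\lesssim m^2\,b_Q$ with $b_Q:=\sum_{P\in\W:\,\rho P\supset Q}\beta_{(1)}(P)/\ell(P)+1/R$. The geometric input is the identity $\Beurling^m\chi_\Pi\equiv 0$ on any half-plane $\Pi$. To get it, one first shows $\Beurling^m\chi_B\equiv 0$ on any disc $B$: the rotational covariance $\Beurling(f\circ R_\theta)=e^{2i\theta}(\Beurling f)\circ R_\theta$ forces $\Beurling\chi_B|_B=0$, and then $\Beurling^j\chi_B|_B=0$ follows inductively from the explicit homogeneous decay of $\Beurling^{j-1}\chi_B$ off $B$; exhausting $\Pi$ by discs and passing to the limit (the kernel of $\Beurling^m$ being integrable away from the diagonal) gives the half-plane case. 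Hence $\nabla\Beurling^m\chi_{\Pi_Q}=0$ on $\Pi_Q\supset Q^*$, so on $Q^*$ one may write $\nabla G_m=\nabla\Beurling^m(\chi_\Omega-\chi_{\Pi_Q})$; since the Calder\'on--Zygmund kernel $k_m$ of $\Beurling^m$ is homogeneous of degree $-2$ with $|\nabla k_m(\zeta)|\lesssim m^2|\zeta|^{-3}$, this yields $|\nabla G_m(x)|\lesssim m^2\int_{\Omega\Delta\Pi_Q}|x-w|^{-3}\,dm(w)$ for $x\in Q^*$, and Lemma \ref{lemBoundForBeurling} with $\eta=1$ and $\ell_0\approx R$ gives $|\nabla G_m(x)|\lesssim m^2 b_Q$. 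For the finitely many ``large'' interior cubes the same trick with a disc $B(x,\delta_\Omega(x)/2)\subset\Omega$ in place of $\Pi_Q$ gives the cruder bound $|\nabla G_m(x)|\lesssim m^2/\delta_\Omega(x)\lesssim m^2 b_Q$.

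It then remains to sum. By Lemma \ref{lemSummingTheBetas} applied with $\ell=1$ (admissible since $1>s$ and $sp>1$),
\[
\sum_{Q\in\W} b_Q^{\,p}\,\ell(Q)^{2+(1-s)p}\ \lesssim\ \sum_{Q\in\W}\ell(Q)^{2+(1-s)p}\Bigl(\Bigl(\textstyle\sum_{\rho P\supset Q}\beta_{(1)}(P)/\ell(P)\Bigr)^{p}+1\Bigr)\ \lesssim\ \norm{\nu}_{F^{s-1/p}_{p,p}(\partial\Omega)}^{p},
\]
so the homogeneous part of the norm is $\lesssim m^2\norm{\nu}_{F^{s-1/p}_{p,p}(\partial\Omega)}$. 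For the $L^p$ term, the same ``subtract a disc'' argument gives $|G_m(z)|\lesssim m\log(\diam(\Omega)/\delta_\Omega(z))$ for $z\in\Omega$ (now from $|k_m(\zeta)|\lesssim m|\zeta|^{-2}$), and $\log(\diam(\Omega)/\delta_\Omega(\cdot))\in L^p(\Omega)$ with norm controlled by $\diam(\Omega)$ and the Lipschitz character; since $\norm{\nu}_{F^{s-1/p}_{p,p}(\partial\Omega)}\gtrsim\mathcal H^1(\partial\Omega)^{1/p}\gtrsim 1$ this is absorbed. Combining the two pieces gives $\norm{\Beurling^m\chi_\Omega}_{F^s_{p,q}(\Omega)}\lesssim m^2\norm{\nu}_{F^{s-1/p}_{p,p}(\partial\Omega)}$.

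The main obstacle is the pointwise gradient estimate: everything rests on the exact vanishing $\Beurling^m\chi_\Pi\equiv 0$ on a half-plane (delicate since $\chi_\Pi\notin L^p$, so one argues through disc exhaustions and the mean-zero/decay properties of $k_m$) and on feeding the resulting slit region $\Omega\Delta\Pi_Q$ into Lemma \ref{lemBoundForBeurling} with exactly the exponent $\eta=1$ matching the $-3$-homogeneity of $\nabla k_m$. The reduction to $q\le p$, the Whitney-cube summation via Lemma \ref{lemSummingTheBetas}, and the $L^p$ bound are routine; one only needs mild care that the far field of $\Omega\Delta\Pi_Q$ and the finitely many large interior cubes contribute at most a constant, which is harmless because $\norm{\nu}_{F^{s-1/p}_{p,p}(\partial\Omega)}\gtrsim 1$.
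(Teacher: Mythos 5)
Your proposal is correct and follows essentially the same route as the paper: reduce to $q\le p$, use the intrinsic norm of Theorem \ref{theoNormOmegapgtrq} on Whitney cubes, compare $\chi_\Omega$ with the half-plane $\Pi_Q$ of Lemma \ref{lemBoundForBeurling} so that only $\Omega\Delta\Pi_Q$ contributes, exploit the $m^2|\zeta|^{-3}$ bound for the differentiated kernel of $\Beurling^m$, and sum with Lemma \ref{lemSummingTheBetas} ($\ell=1$); the paper phrases the key step with first-order differences of truncated integrals rather than with $\nabla\Beurling^m\chi_{\Pi_Q}$, but this is the same estimate. The one point to state with care is your half-plane identity: $\Beurling^m\chi_\Pi$ is only defined modulo constants (it is a BMO-type object, $\chi_\Pi\notin L^p$), so ``$\Beurling^m\chi_\Pi\equiv0$'' has no canonical meaning and the disc-exhaustion limit does not converge at the level of function values (the $|\zeta|^{-2}$ kernel is not absolutely integrable over the unbounded sliver $\Pi\setminus B_R$); what is true, and all you actually use, is that differences, equivalently the gradient, of $\Beurling^m\chi_{\Pi_Q}$ vanish inside $\Pi_Q$ — your exhaustion argument does prove this once run at the gradient level (where the $-3$ decay beats the sliver), and it is precisely what the paper checks via the Green's-formula computation on truncated regions together with \cite[Lemma 4.2]{CruzTolsa}.
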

\begin{proof}
Note that if $p<q$, then $\norm{\Beurling^m \chi_\Omega}_{ F^s_{p,q}(\Omega) }\lesssim \norm{\Beurling^m \chi_\Omega}_{ B^s_{p,p}(\Omega) }$ by Proposition \ref{theoTriebelEmbeddingsOmega}, so we will assume with no loss of generality that $p\geq q$, which in particular implies that $s>0\geq\frac{2}{p}-\frac{2}{q}$. We follow the approach given in \cite[proof of Lemma 6.3]{CruzTolsa} which gets quite shorter with the  norm given in Theorem \ref{theoNormOmegapgtrq}. Indeed, we only need to control the homogeneous seminorm
\begin{equation}\label{eqNormBeurlinCharac1}
\norm{\Beurling^m \chi_\Omega}_{\dot F^s_{p,q}(\Omega) }^p
	 :=  \sum_{Q\in\mathcal{W}} \int_Q \left(\int_{2Q}\frac{|\Beurling^m \chi_\Omega(x)-\Beurling^m \chi_\Omega(y)|^q}{|x-y|^{sq+2}} \,dy\right)^{\frac{p}{q}}dx,
 \end{equation}
and the non-local part in the aforementioned proof, which is the most difficult one to treat, is not there anymore. 
	 
Choose a half-plane $\Pi_Q$ as in Lemma \ref{lemBoundForBeurling}. By \rf{eqDorronsoroStayAway}, chosing appropriate Whitney constants we have that $x$ and $y$ are in $\Pi_Q$. Next we use that, \emph{formally},  $\Beurling \chi_{\Pi_Q}$ is constant in $\Pi_Q$ and $\overline{\Pi_Q}^c$ (see \cite[Lemma 4.2]{CruzTolsa}), i.e., that $\Beurling\chi_{\Pi_Q}=c \chi_{\Pi_Q}$ modulo constants and, by induction, $\Beurling^m \chi_{\Pi_Q}$ is constant in $\Pi_Q$ as well, so 
$|\Beurling^m \chi_\Omega(x)-\Beurling^m\chi_\Omega (y)| = |\Beurling^m \chi_\Omega(x)-\Beurling^m \chi_\Omega (y)-\Beurling^m \chi_{\Pi_Q}(x)+\Beurling^m\chi_{\Pi_Q} (y)|$
(understood in the BMO sense {as in \cite[Section 4.6]{AstalaIwaniecMartin}). {We next  make} this statement rigorous.}

It is well-known (see \cite[Section 4.1.4]{AstalaIwaniecMartin}) that 
\begin{equation}\label{eqBeurlingm}
\Beurling^m \varphi (z) = \lim_{\varepsilon \to 0} \frac{m(-1)^m}{\pi} \int_{|w-z|>\varepsilon} \frac{(\overline{z-w})^{m-1}}{(z-w)^{m+1}}\varphi(w) \, dm(w) \quad\quad \mbox{for }\varphi \in L^2.
\end{equation}
{In order to be able to write a concrete formula for the action of the Beurling transform on $L^\infty$- or $BMO$-functions, such as characteristic functions we are dealing with, one can apply  a specific kernel as in \cite[(4.91)]{AstalaIwaniecMartin}. On the other hand   under iteration such a formula  easily becomes cumbersome, and in the literature there seems not to  appear suitable formulae for this purpose.}  
However, the reader can check that
$$\int_{r\D \cap \Pi_Q\setminus B(x,\varepsilon)} \frac{(\overline{z-x})^{m-1}}{(z-x)^{m+1}} dm(z)- \int_{r\D \cap \Pi_Q\setminus B(y,\varepsilon)}\frac{(\overline{z-y})^{m-1}}{(z-y)^{m+1}}dm(z) \xrightarrow{r\to\infty} 0$$
using Green's formula \cite[Theorem 2.9.1]{AstalaIwaniecMartin} (see the proof of Lemma \ref{lemKernelVanishes} below for inspiration), and
$$\int_{(r\D)^c{\cap \Pi_Q} } \left(\frac{(\overline{z-x})^{m-1}}{(z-x)^{m+1}} - \frac{(\overline{z-y})^{m-1}}{(z-y)^{m+1}}\right)dm(z) \xrightarrow{r\to\infty} 0.$$
{Note that the preceding fact can be seen as a consequence of the evenness of the kernel.} {These formulae solve the above iteration problem,} and using \rf{eqBeurlingm} we can write
\begin{align*}
|\Beurling^m \chi_\Omega(x)-\Beurling^m\chi_\Omega (y)|
\nonumber	& \leq m \int_{\Omega \Delta \Pi_Q} \left| \frac{(\overline{z-x})^{m-1}}{(z-x)^{m+1}}- \frac{(\overline{z-y})^{m-1}}{(z-y)^{m+1}}  \right|dm(z) \\
	&  \lesssim m^2 \int_{\Omega \Delta \Pi_Q}\frac{|x-y|}{|z-x|^3} dm(z).
\end{align*}

By {formula \rf{eqBoundForBeurling} in Lemma \ref{lemBoundForBeurling}}, we can write
\begin{align*}
|\Beurling^m \chi_\Omega(x)-\Beurling^m\chi_\Omega (y)|
	&  \lesssim m^2 |x-y| \left(\sum_{P\in \mathcal{W}: Q\subset \rho P} \frac{\beta_{(1)}(P)}{\ell(P)} + \frac{1}{\diam(\Omega)}\right).
\end{align*}
Combining with \rf{eqNormBeurlinCharac1}, we get that
\begin{align*}
\norm{\Beurling^m \chi_\Omega}_{\dot F^s_{p,q}(\Omega) }^p
\nonumber 	& \lesssim  \sum_{Q\in\mathcal{W}} |Q| \left( \ell(Q)^{(1-s)q} m^{2q} \left(\sum_{P: Q\subset \rho P} \frac{\beta_{(1)}(P)}{\ell(P)} + \frac{1}{\diam(\Omega)} \right)^q  \right)^{\frac{p}{q}}\\
 	 & \approx  m^{2p}\sum_{Q\in\mathcal{W}} \ell(Q)^{2+p-sp}   \left(\sum_{P: Q\subset \rho P} \frac{\beta_{(1)}(P)}{\ell(P)} + \frac{1}{\diam(\Omega)} \right)^p
\end{align*}
and Lemma \ref{lemSummingTheBetas} with $\ell=1$ ends the proof.
\end{proof}

{
To  complete the proof of Theorem \ref{theoIterates} we need polynomial estimates for the growth of the $ F^s_{p,q}$ norm of the iterates of the Beurling transform. 
\begin{lemma}\label{theoLinearGrowth}
Let $m\in\N$, $1<p,q<\infty$ and $s>0$. Then $B^m$ is bounded on $F^s_{p,q}$ with norm
$$\norm{B^m}_{{F}^s_{p,q}\to {F}^s_{p,q}} \lesssim m,$$
where the quantifier depends only on $p,q,$ and the dimension.
\end{lemma}
\begin{proof}
E.g.  \cite[Thm 1.1]{CJJ05} gives the stated bound on  the space $\dot{F}^{s}_{p,q}$ {since}  the convolution kernel of $T^m$ obeys the $L^\infty$-bound $\lesssim m$ on the unit circle. In view of the analogous bound on $L^p$, the desired norm bound then follows for  the operator $T^m$ acting on ${F}^{s}_{p,q}$.
\end{proof}}

\begin{proof}[Proof of Theorem \ref{theoIterates}]
{We use the $T1$ theorem in \cite[Theorem 1.1]{PratsSaksman}, which says that $B_\Omega^m 1\in F^s_{p,q}(\Omega)$ implies that $B_\Omega^m$ is bounded as an operator from $F^s_{p,q}(\Omega)$ to itself. In particular we use the quantitative bound  \cite[(5.8)]{PratsSaksman},  according to which }we have that  
\begin{align*}
\norm{\Beurling_\Omega^{{m}}}_{F^s_{p,q}(\Omega)\to F^s_{p,q}(\Omega)}
	& \lesssim m^2 + \norm{\Beurling^{{m}}}_{F^s_{p,q}\to F^s_{p,q}}+\norm{\Beurling^{{m}}}_{L^p\to L^p}+\norm{\Beurling^{{m}}}_{L^q\to L^q} + \norm{\Beurling_\Omega^{{m}} 1}_{\dot F^s_{p,q}(\Omega)}.
\end{align*}
{Lemma \ref{theoLinearGrowth}} and Lemma \ref{lemBeurlingIteratesCharacteristic} imply that
\begin{align*}
\norm{\Beurling_\Omega^{{m}}}_{F^s_{p,q}(\Omega)\to F^s_{p,q}(\Omega)}
	& \lesssim_{s,p,q,\diam(\Omega),\delta} m^2  \left( \norm{\nu}_{B^{s-1/p}_{p,p}(\partial\Omega)} +1 \right) \approx_{s,p,q,\diam(\Omega),\delta} m^2  \norm{\nu}_{B^{s-1/p}_{p,p}(\partial\Omega)}. 
\end{align*}
\end{proof}

\subsection{Compactness of the commutator}\label{secCommutator}
\begin{lemma}\label{lemCompactnessCommutator}
The commutator $[\mu, \Beurling_\Omega]$ is compact in $F^s_{p,q}(\Omega)$.
\end{lemma}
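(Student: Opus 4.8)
The plan is to combine an approximation argument with the well-known smoothing effect of the commutator for a smooth symbol. Observe first that $[\mu,\Beurling_\Omega]$ is a bounded operator on $F^s_{p,q}(\Omega)$: since $sp>2$, Lemma \ref{lemAlgebra} shows that $F^s_{p,q}(\Omega)$ is a multiplicative algebra, so multiplication by $\mu$ is bounded on $F^s_{p,q}(\Omega)$, and $\Beurling_\Omega$ is bounded on $F^s_{p,q}(\Omega)$ by Theorem \ref{theoIterates} (the case $m=1$).

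The first step is to reduce to a smooth symbol. I would choose $\mu_j\in C^\infty_c(\C)$ with $\mu_j|_\Omega\to\mu$ in $F^s_{p,q}(\Omega)$, which is possible because $1<p,q<\infty$ and $\Omega$ is a bounded Lipschitz domain, so that $C^\infty(\overline{\Omega})$ is dense in $F^s_{p,q}(\Omega)$. Using the algebra estimate \rf{eqAlgebra} together with the boundedness of $\Beurling_\Omega$ one gets
$$\norm{[\mu-\mu_j,\Beurling_\Omega]}_{F^s_{p,q}(\Omega)\to F^s_{p,q}(\Omega)}\lesssim\norm{\mu-\mu_j}_{F^s_{p,q}(\Omega)}\,\norm{\Beurling_\Omega}_{F^s_{p,q}(\Omega)\to F^s_{p,q}(\Omega)}\xrightarrow{j\to\infty}0,$$
so $[\mu,\Beurling_\Omega]$ is the operator-norm limit of the operators $[\mu_j,\Beurling_\Omega]$. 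Since the compact operators form a closed subspace of the bounded operators, it suffices to prove that each $[\mu_j,\Beurling_\Omega]$ is compact on $F^s_{p,q}(\Omega)$.

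The second step exploits that, for a smooth (Lipschitz, compactly supported) symbol, the commutator gains a full derivative. Since $\mu_j$ is defined on all of $\C$, for $g\in F^s_{p,q}(\Omega)$ one has $\chi_\Omega(\mu_j g)=\mu_j\,\chi_\Omega g$, whence the factorization
$$[\mu_j,\Beurling_\Omega]g=\chi_\Omega\,[\mu_j,\Beurling](\chi_\Omega g),$$
where $[\mu_j,\Beurling]$ is the full-plane commutator with kernel $-\frac{1}{\pi}(\mu_j(z)-\mu_j(w))(z-w)^{-2}$. Differentiating this kernel in $z$ produces, on one hand, the term $(\partial\mu_j)(z)(z-w)^{-2}$, whose operator is $f\mapsto(\partial\mu_j)\,\Beurling f$, bounded on $L^p(\C)$; and on the other hand a first Calder\'on commutator with kernel comparable to $(\mu_j(z)-\mu_j(w))(z-w)^{-3}$, which is bounded on $L^p(\C)$ by Calder\'on--Zygmund theory precisely because $\mu_j$ is Lipschitz. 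Together with the obvious $L^p$-bound for $[\mu_j,\Beurling]$ itself, this gives that $[\mu_j,\Beurling]\colon L^p(\C)\to W^{1,p}(\C)$ is bounded (see \cite{AstalaIwaniecMartin}, or the argument in \cite{CruzMateuOrobitg}). Since $F^s_{p,q}(\Omega)\subset L^p(\Omega)$, we have $\chi_\Omega g\in L^p(\C)$, hence $[\mu_j,\Beurling_\Omega]g\in W^{1,p}(\Omega)=F^1_{p,2}(\Omega)$ with norm controlled by $\norm{g}_{F^s_{p,q}(\Omega)}$. As $0<s<1$ and $\Omega$ is a bounded Lipschitz (hence extension) domain, the embedding $F^1_{p,2}(\Omega)\hookrightarrow F^s_{p,q}(\Omega)$ holds (Proposition \ref{theoTriebelEmbeddingsOmega}) and is in fact compact, by the Rellich--Kondrachov phenomenon for Triebel--Lizorkin spaces on bounded Lipschitz domains. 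Thus $[\mu_j,\Beurling_\Omega]$ is compact for each $j$, and by the first step so is $[\mu,\Beurling_\Omega]$.

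The main obstacle is the middle ingredient: one cannot simply quote that a commutator of multiplication by a smooth function with a zero-order operator is smoothing of order $-1$ on $F^s_{p,q}$, because the extension by zero $\chi_\Omega g$ of an element of $F^s_{p,q}(\Omega)$ is, in general, only in $L^p(\C)$ and not in any $F^s_{p,q}$-class once $s>1/p$ (and here $sp>2$). What is genuinely needed is the $L^p\to W^{1,p}$ mapping property of $[\mu_j,\Beurling]$, which is where the first Calder\'on-commutator estimate enters; the remaining arguments are a routine adaptation of the scheme in \cite{CruzMateuOrobitg}.
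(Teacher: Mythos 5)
Your argument is correct, and its skeleton coincides with the paper's: reduce to a smooth symbol by operator-norm approximation (density of smooth functions, the algebra estimate \rf{eqAlgebra}, and the boundedness of $\Beurling_\Omega$ from Theorem \ref{theoIterates}), and then, for a smooth symbol, combine a smoothing property of the commutator with a compact embedding. Where you genuinely diverge is in how the smooth-symbol case is settled. The paper simply quotes from \cite{CruzMateuOrobitg} that $[\mu_j,\Beurling_\Omega]\colon L^\infty\to W^{\beta,p}$ is compact for $s<\beta<1$ and sandwiches it between the embeddings $F^s_{p,q}(\Omega)\hookrightarrow L^\infty(\Omega)$ (this is where $sp>2$ enters) and $W^{\beta,p}(\Omega)\hookrightarrow F^s_{p,q}(\Omega)$. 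You instead derive the smoothing yourself, $[\mu_j,\Beurling]\colon L^p(\C)\to W^{1,p}(\C)$, via the factorization $[\mu_j,\Beurling_\Omega]g=\chi_\Omega[\mu_j,\Beurling](\chi_\Omega g)$ (which is exactly what allows you to work on all of $\C$ with only $\chi_\Omega g\in L^p$), and then invoke the compact Rellich--Kondrachov embedding $W^{1,p}(\Omega)\hookrightarrow F^s_{p,q}(\Omega)$, legitimate since $0<s<1$ and $\Omega$ is a bounded Lipschitz extension domain; the paper uses the same compactness principle for the reflection in Theorem \ref{theoCompactnessReflection}. What your route buys is that the smooth-symbol step only needs $F^s_{p,q}(\Omega)\subset L^p$, not $sp>2$ (that hypothesis is still used in the approximation step through the algebra property); what it costs is that the $L^p\to W^{1,p}$ bound is the heaviest ingredient: the operator with kernel $(\mu_j(z)-\mu_j(w))(z-w)^{-3}$ is a first Calder\'on commutator whose $L^2$-boundedness for Lipschitz symbols is a Calder\'on/Coifman--McIntosh--Meyer-type theorem, not plain Calder\'on--Zygmund theory, so it should be cited as such rather than waved at. One small point to record for completeness: membership in $W^{1,p}(\C)$ also requires the $\bar\partial$-derivative, which is easy but not automatic from your $\partial$-computation; distributionally $\bar\partial[\mu_j,\Beurling]f=(\bar\partial\mu_j)\,\Beurling f-(\partial\mu_j)\,f$, using $\bar\partial\Beurling f=\partial f$, and this is clearly in $L^p$. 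With these two citations/remarks supplied, your proof is a valid alternative to the one in the paper.
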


\begin{proof}
Choose $s<\beta<1$. If $\mu \in C^\infty(\C)$, then $[\mu,\Beurling_\Omega]: L^\infty \to F^\beta_{p,2}$ is compact (see \cite[(18) and the subsequent paragraph]{CruzMateuOrobitg}). Precomposing with the inclusion $F^s_{p,q} \to L^\infty $ and postcomposing with  $F^\beta_{p,2}\to F^s_{p,q} $ for $\beta>s$ we get that the lemma holds for $C^\infty$ coefficients.

To show compactness for general $\mu\in F^s_{p,q}(\Omega)$, we only need to see that the commutator can be approximated in operator norm by a sequence of commutators with smooth coefficients. For this one only needs to approximate $\mu$ by $\{\mu_n\}\subset C^\infty(\bar\Omega)$ (combine the density of $C^\infty_c$ functions in $F^s_{p,q}$ in \cite[Theorem 2.3.3]{TriebelTheory} and Theorem \ref{theoExtension}, for instance). By \rf{eqAlgebra} and Theorem \ref{theoIterates} (\cite{CruzTolsa} is enough in this case), we conclude that 
$$[\mu_n,\Beurling_\Omega] \to [\mu,\Beurling_\Omega]$$
 in the operator norm.
\end{proof}

\subsection{Beurling Reflection}\label{secReflection}

\begin{theorem}\label{theoCompactnessReflection}
Let $0<s<1$, $1<p,q<\infty$ with $sp>2$, and let $\Omega$ be a bounded $B^{s+1-\frac1p}_{p,p}$-domain. For every $m$, the operator $\mathcal{R}_m$ is compact in $F^s_{p,q}(\Omega)$.
\end{theorem}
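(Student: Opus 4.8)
The plan is to obtain compactness as a soft corollary of a much stronger \emph{smoothing} property of the reflection, which I would isolate and prove as Proposition \ref{lemSmallNormCloseToBoundary}: for each $m$ and every $h>0$,
\begin{equation*}
\norm{\mathcal{R}_m f}_{\dot F^s_{p,q}(\Omega)}\lesssim_{m,h}\norm{f}_{\mathcal{C}^h(\Omega)} .
\end{equation*}
Granting this, and adding the trivial bound $\norm{\mathcal{R}_m f}_{L^p(\Omega)}\lesssim\norm{f}_{L^p(\Omega)}\lesssim\norm{f}_{\mathcal{C}^h(\Omega)}$ (from the $L^p$-boundedness of $\Beurling$ and boundedness of $\Omega$), we obtain that $\mathcal{R}_m\colon\mathcal{C}^h(\Omega)\to F^s_{p,q}(\Omega)$ is bounded. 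Since $sp>2=d$, the supercritical embedding gives a continuous inclusion $F^s_{p,q}(\Omega)\hookrightarrow\mathcal{C}^{\alpha}(\Omega)$ for some $\alpha>0$ (see \cite[Section 2.7]{TriebelTheory} together with the extension operator of Theorem \ref{theoExtension}), and for $0<h<\alpha$ the inclusion $\mathcal{C}^{\alpha}(\Omega)\hookrightarrow\mathcal{C}^h(\Omega)$ is compact by the Arzel\`{a}--Ascoli theorem on the bounded set $\overline{\Omega}$. Fixing such an $h$, the operator $\mathcal{R}_m$ on $F^s_{p,q}(\Omega)$ factors as the composition of the compact embedding $F^s_{p,q}(\Omega)\hookrightarrow\mathcal{C}^h(\Omega)$ with the bounded operator $\mathcal{R}_m\colon\mathcal{C}^h(\Omega)\to F^s_{p,q}(\Omega)$; a composition involving a compact operator is compact (see \cite[Section 4.3]{Schechter}), which is the assertion of the theorem.

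All the work is therefore concentrated in the smoothing estimate. For that I would first unwind the $F^s_{p,q}(\Omega)$-seminorm using the intrinsic characterizations of Theorems \ref{theoNormOmegapgtrq} and \ref{theoNormOmegapsmallerq} (treating $p\geq q$ and $p<q$ separately), reducing matters to summing, over the Whitney cubes $Q$ of $\Omega$, the oscillation of $\mathcal{R}_m f$ on $Q$ against the appropriate weight. On a fixed $Q$ I would insert the explicit kernel of the Beurling reflection from \cite{PratsQuasiconformal} (recalled in Section \ref{secReflection}), and then use the Cruz--Tolsa device of replacing $\chi_\Omega$ by $\chi_{\Pi_Q}$ near $Q$, $\Pi_Q$ being the half-plane of Lemma \ref{lemBoundForBeurling}: against a half-plane the reflection kernel is harmless, and the error is controlled by the Dorronsoro coefficients $\beta_{(1)}(P)$ of the ancestor cubes $P$ with $\rho P\supset Q$. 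The gain in regularity --- that an arbitrarily weak norm $\norm{f}_{\mathcal{C}^h}$ already dominates an $F^s_{p,q}$-quantity, with room to spare in the power of $\ell(P)$ multiplying the $\beta$'s --- is then harvested by summing the betas through Lemma \ref{lemSummingTheBetas} and the Whitney-chain estimates of \cite{PratsTolsa,PratsSaksman}.

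The main obstacle is exactly this last step: producing a control of the oscillation of $\mathcal{R}_m f$ on a Whitney cube that is uniform in $Q$ and carries the correct power of $\ell(Q)$ requires careful bookkeeping along chains of Whitney cubes, and the borderline oscillation terms force one to subtract a suitable polynomial approximation of $f$ on each cube before estimating --- this is where the Meyer polynomials of Section \ref{secMeyer} come in, and the technical lemma deferred there is the real bottleneck. Everything upstream of Proposition \ref{lemSmallNormCloseToBoundary}, including the reduction of compactness to the smoothing estimate carried out above, is soft; the genuine difficulty, and the novelty, lies in the proof of that proposition.
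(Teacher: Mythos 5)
Your proposal is correct and follows essentially the same route as the paper: Theorem \ref{theoCompactnessReflection} is there deduced from the smoothing estimate of Proposition \ref{lemSmallNormCloseToBoundary} together with a compact embedding (the paper simply cites the Rellich--Kondrachov theorem from \cite[Remark 4.3.2/1]{TriebelTheory}, while you realize the same compact embedding $F^s_{p,q}(\Omega)\hookrightarrow\mathcal{C}^h(\Omega)$ explicitly through the supercritical H\"older embedding and Arzel\`a--Ascoli, which is the identical mechanism). Your outline of the smoothing estimate also matches the paper's actual proof --- intrinsic Whitney-cube norms, the kernel expression from \cite{PratsQuasiconformal}, the half-plane comparison of Lemma \ref{lemBoundForBeurling} with the Dorronsoro betas summed via Lemma \ref{lemSummingTheBetas}, and the Meyer-polynomial lemma as the technical core --- so there is nothing essentially different to report.
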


Theorem \ref{theoCompactnessReflection} is a straight consequence of the Rellich-Kondrachov compactness Theorem (see \cite[Remark 4.3.2/1]{TriebelTheory}) together with the following proposition.

\begin{proposition}\label{lemSmallNormCloseToBoundary}
Let $0<s<1$, $1<p,q<\infty$ with $sp>2$, and let $\Omega$ be a bounded $B^{s+1-\frac1p}_{p,p}$-domain, and $m\in\N$. Then
\begin{equation*}
\norm{\mathcal{R}_m f }_{\dot F^s_{p,q}(\Omega)} \lesssim_h \norm{f}_{\mathcal{C}^h(\Omega)}
\end{equation*}
for every $h>0$.
\end{proposition}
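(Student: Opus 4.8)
\noindent\emph{Proof plan.} The starting point is a structural observation: for $z\in\Omega$ the value $\mathcal{R}_m f(z)=\chi_\Omega(z)\,\Beurling\bigl(\chi_{\Omega^c}\Beurling^m(\chi_\Omega f)\bigr)(z)$ is an integral over $\Omega^c$ whose kernel $(z-w)^{-2}$ is holomorphic in $z$ on $\Omega$, so $\mathcal{R}_m f$ is holomorphic on $\Omega$. The proof of Lemma \ref{lemTarget} used only the mean value property of holomorphic functions and the comparability $\delta_\Omega(z)\approx\delta_\Omega(\zeta)$ of the distance to the boundary at nearby points, hence it carries over to any bounded uniform domain $\Omega$; together with Theorems \ref{theoNormOmegapgtrq}--\ref{theoNormOmegapsmallerq} it gives, at least when $q\le p$ (the case $q>p$ being reduced to it via the embedding $F^s_{p,p}(\Omega)\subset F^s_{p,q}(\Omega)$ of Proposition \ref{theoTriebelEmbeddingsOmega}),
$$\norm{\mathcal{R}_m f}_{\dot F^s_{p,q}(\Omega)}\lesssim\norm{\delta_\Omega^{\,1-s}(\mathcal{R}_m f)'}_{L^p(\Omega)}\approx\Bigl(\sum_{Q\in\W}\ell(Q)^{(1-s)p}\int_Q|(\mathcal{R}_m f)'(z)|^p\,dm(z)\Bigr)^{1/p}.$$
So it suffices to produce, for each Whitney cube $Q$ of $\Omega$ and each $z\in Q$, a bound $|(\mathcal{R}_m f)'(z)|\lesssim_h M_Q$ such that $\sum_{Q\in\W}\ell(Q)^{2+(1-s)p}M_Q^p\lesssim_h\norm{f}_{\mathcal{C}^h(\Omega)}^p\,\norm{\nu}_{F^{s-1/p}_{p,p}(\partial\Omega)}^p$, the right-hand side being finite precisely because $\Omega$ is an $F^{s+1-1/p}_{p,p}$-domain.

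To estimate $(\mathcal{R}_m f)'(z)$ I would use the explicit reflection kernel recalled in Section \ref{secReflection}. Plugging the kernel $\tfrac{c_m}{\pi}(\overline{w-v})^{m-1}(w-v)^{-m-1}$ of the $m$-th iterate of $\Beurling$ (valid for $w\notin\overline\Omega$) into $(\mathcal{R}_m f)'(z)=\tfrac{2}{\pi}\int_{\Omega^c}\Beurling^m(\chi_\Omega f)(w)(z-w)^{-3}\,dm(w)$ exhibits $(\mathcal{R}_m f)'(z)=\int_\Omega f(v)\widetilde K_m(z,v)\,dm(v)$, where $\widetilde K_m(z,v)$ is an integral over $\Omega^c$ of $(z-w)^{-3}(\overline{w-v})^{m-1}(w-v)^{-m-1}$. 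I would split $v$ by the Whitney cube $S\in\W$ it lies in, and on each $S$ replace $f$ by its Meyer polynomial $\mathbf{P}_S f$ of a degree chosen in terms of $h$; the technical lemma of Section \ref{secMeyer} gives $\norm{f-\mathbf{P}_S f}_{L^\infty(S)}\lesssim\ell(S)^{h}\norm{f}_{\mathcal{C}^h(\Omega)}$ together with size bounds for $\mathbf{P}_S f$, so that the contribution of the polynomial part is absorbed by the cancellation of the iterated Beurling kernels, while the error part comes with the extra power $\ell(S)^{h}$ that forces convergence.

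The kernel $\widetilde K_m(z,v)$ is then estimated by replacing $\Omega$, near both $Q$ and $S$, by the half-planes $\Pi_Q,\Pi_S$ of Lemma \ref{lemBoundForBeurling}: on a half-plane the iterates $\Beurling^m\chi_\Pi$ are locally constant (the mechanism already exploited in the proof of Lemma \ref{lemBeurlingIteratesCharacteristic}, together with the Green's-formula computation that makes the iterated Beurling kernels meaningful modulo constants), so the flat model contributes only a smooth, rapidly decaying kernel and the whole discrepancy is controlled by $\int_{\Omega\Delta\Pi_Q}|z-w|^{-2-\eta}\,dm$ and its analogue at $S$, i.e.\ by sums of $\beta_{(1)}$-coefficients through \rf{eqBoundForBeurling}. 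This leads to $M_Q$ being a finite combination of expressions of the type $\norm{f}_{\mathcal{C}^h}\sum_{S\in\W}\tfrac{\ell(S)^{2+h}}{\Dist(Q,S)^{3+h}}\bigl(\sum_{\rho P\supset Q}\beta_{(1)}(P)/\ell(P)+\sum_{\rho P\supset S}\beta_{(1)}(P)/\ell(P)+1\bigr)$. The geometric sums over $S$ are performed with Lemma \ref{lemmaximal}, while when $v$ ranges over a long chain between $Q$ and $S$ one uses the chain estimates \rf{eqChainDistances}--\rf{eqChainNumberOfCubesUniformlyBounded} of \cite{PratsTolsa} to transfer the $\beta$-weights attached to $S$ to cubes near $Q$; what is left, $\sum_Q\ell(Q)^{2+(1-s)p}\bigl(\sum_{\rho P\supset Q}\beta_{(1)}(P)/\ell(P)+1\bigr)^p$, is $\lesssim\norm{\nu}_{F^{s-1/p}_{p,p}(\partial\Omega)}^p$ by Lemma \ref{lemSummingTheBetas} with $\ell=1$. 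The freedom in the Meyer-polynomial degree is exactly what lets one afford any $h>0$, the constant depending on $h$ through that degree.

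The hard part is the previous two paragraphs, namely the kernel estimate itself: one must bound $\widetilde K_m(z,v)$ (and, in the variant needed for $q>p$, compare it at nearby source points along a chain) with powers of $\ell(Q)$, $\ell(S)$ and $\Dist(Q,S)$ that match exactly the weights required by Lemmas \ref{lemmaximal} and \ref{lemSummingTheBetas}, while simultaneously tracking the $BMO$-type ambiguities intrinsic to the kernels of $\Beurling^m$ and checking that the half-plane reflection is not simply ``formally zero'' but a genuine bounded, rapidly decaying operator whose own contribution must be estimated. Once this kernel bound is established, the summation is a routine application of the tools listed above.
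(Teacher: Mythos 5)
Your opening reduction is sound and in fact parallels the paper: $\mathcal{R}_m f$ is holomorphic on $\Omega$, so the $\dot F^s_{p,q}(\Omega)$ seminorm is controlled cube by cube, via the mean value property, by weighted $L^p$-type quantities of $\partial\mathcal{R}_m f$, and the final summation is indeed closed with Lemmas \ref{lemmaximal} and \ref{lemSummingTheBetas}. The gap is precisely in what you defer as ``the hard part'': the pointwise bound on $\widetilde K_m(z,v)$ with decay $\Dist(Q,S)^{-3-h}$ weighted by local $\beta$-sums at $Q$ \emph{and} at $S$ cannot be obtained by the half-plane comparisons you invoke. Lemma \ref{lemKernelVanishes} kills the half-plane kernel only when \emph{both} points lie in the same half-plane, which is why the paper uses $\Pi_Q$ only for the local piece ($\xi\in 4Q$); when $v$ is far from $z$ there is no single half-plane fitting $\partial\Omega$ near both cubes, and the symmetric difference $\Omega\Delta\Pi$ over that long range is not controlled by the local betas of Lemma \ref{lemBoundForBeurling}. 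Pointwise size and smoothness estimates for $\widetilde K_m$ of the kind your plan needs are exactly the \cite{MateuOrobitgVerdera}-type bounds that, as Section \ref{secReflection} stresses, require $\mathcal{C}^{1+\varepsilon}$ boundaries and are not strong enough at the endpoint regularity $F^{s+1-\frac1p}_{p,p}$; avoiding them is the whole point of the proposition. Likewise, your claim that the per-cube polynomial parts of $f$ are ``absorbed by the cancellation of the iterated Beurling kernels'' is unsubstantiated: $\int_S \mathbf{P}_S f(v)\,\widetilde K_m(z,v)\,dm(v)$ enjoys no cancellation for an individual far-away cube $S$, and summing these contributions reconstructs essentially the original operator applied to a piecewise-polynomial approximant of $f$, which is as hard as the problem you started with.

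The missing ingredients are the ones the paper actually uses for the nonlocal part. First, the exact kernel identity of Proposition \ref{propoKernelExpression}, which writes $K_m(z,\xi)$ as $c_m\,\partial\Beurling\chi_\Omega(z)\,(\overline{\xi-z})^{m-1}(\xi-z)^{-m-1}$ plus Taylor remainders $H^j_m(\xi)-P^{m-j}_z H^j_m(\xi)$ of the auxiliary functions $H^j_m$ divided by $(\xi-z)^{m+3-j}$. Second, the H\"older norm of $f$ enters only through the first of these terms, via the $\mathcal{C}^h$-boundedness of $\Beurling^m_\Omega$ (the Main Lemma of \cite{MateuOrobitgVerdera}), and not through a polynomial approximation of $f$: the paper subtracts only the single constant $f_Q$ per cube, absorbing it with the $F^s_{p,q}(\Omega)$-boundedness of $\mathcal{R}_m$, and the remaining terms use only $\norm{f}_{L^\infty}$. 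Third, the Meyer polynomials are applied not to $f$ but to the geometric functions $H^j_m$: Lemma \ref{lemTaylorToBesov} (via duality, chains and the maximal function) converts the Taylor-remainder terms into $\norm{\nabla^{m-j}H^j_m}_{F^s_{p,p}(\Omega)}$, which is finite by Lemma \ref{lemHjm}, i.e.\ ultimately by the bound on $\Beurling^j\chi_\Omega$ of Lemma \ref{lemBeurlingIteratesCharacteristic}. Without these three ingredients your scheme has no route to the kernel estimate it requires (and the extension of Lemma \ref{lemTarget} beyond the disk is also only asserted), so as written the proof does not close.
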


Next we take a closer look to the kernel of the Beurling reflection defined in \rf{eqDefinitionReflection}. The reflection can be written as
$$\mathcal{R}_m f(z)=\int_{\Omega^c} \frac{m(-1)^{m+1}\pi^{-2}}{(z-w)^{2}} \int_\Omega f(\xi) \frac{(\overline{w-\xi})^{m-1}}{(w-\xi)^{m+1}}\, dm(\xi)\, dm(w).$$

In a quite general setting, one can use Fubini in the former expression of $\mathcal{R}_m$ and the related kernel $\widetilde{K}_m(z,\xi)=\int_{\Omega^c} \frac{(\overline{w-\xi})^{m-1}}{(z-w)^{2}\,(w-\xi)^{m+1}}\, dw$ appears as a natural element. Mateu, Orobitg and Verdera study this kernel in \cite[Lemma 6]{MateuOrobitgVerdera} assuming the boundary of the domain $\Omega$ to be in ${\mathcal{C}}^{1+\varepsilon}$ for $\varepsilon<1$. They prove the size inequality 
$$|\widetilde{K}_m(z,\xi)|\lesssim \frac{1}{|z-\xi|^{2-\varepsilon}}$$
and a smoothness inequality in the same spirit. Cruz, Mateu and Orobitg proved an analogous result to Theorem \ref{theoCompactnessReflection} under stronger assumptions on the regularity of the boundary in \cite{CruzMateuOrobitg}, namely, that the boundary had ${\mathcal{C}}^{1+s+\varepsilon}$ parameterizations. They could show that the kernel is smoothing in this context. Their proof was based on the size and the smoothness estimates of the kernel shown in \cite{MateuOrobitgVerdera}, which could be useful for the case $F^\sigma_{p,2}(\Omega)$ with $\sigma<s-2/p$ but they are not sufficiently strong to deal with the endpoint case $F^s_{p,2}(\Omega)$ when the domain has just $B^{1+s-\frac1p}_{p,p}$ parameterizations. Nevertheless, their argument was adapted in \cite{PratsQuasiconformal} to get Proposition \ref{propoKernelExpression} below, which will be used to prove Proposition \ref{lemSmallNormCloseToBoundary}. 

Let us collect the necessary background. Given $m \in \N$,
 let us define the kernel
\begin{equation}\label{eqKernelVectorM}
K_{m}(z,\xi):=\int_{\Omega^c} \frac{m(-1)^{m+1}\pi^{-2} (\overline{w-\xi})^{m-1}}{(z-w)^{3}\,(w-\xi)^{m+1}}\, dm(w)=\int_{\partial \Omega} \frac{c_m(\overline{w-\xi})^{m}}{(z-w)^{3}\,(w-\xi)^{m+1}}\, dw
 \end{equation}
for all $z, \xi \in \Omega$, where the path integral is oriented counterclockwise. Note that for suitable $z$ and $f$ we will be able to use Fubini's Theorem to get 
$$\partial\mathcal{R}_m f(z)=\int_\Omega f(\xi) K_m(z,\xi)\, dm(\xi).$$

\begin{lemma}\label{lemKernelVanishes}
Let $\Pi$ be an open half plane, and $x,y\in \Pi$. For $m_1,m_2,m_3\in \N_0=\N \cup \{0\}$ with $m_1+m_2-m_3>2$ we have that 
$$\int_{\Pi^c}\frac{(\overline{w-y})^{m_3}}{({x-w})^{m_1}({w-y})^{m_2}}\,dm(w)
=  \int_{\partial \Pi}\frac{(\overline{w-y})^{m_3+1}}{({x-w})^{m_1}({w-y})^{m_2}} dw 
=0$$
\end{lemma}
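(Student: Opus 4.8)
The statement is a purely computational identity: for a half-plane $\Pi$ and $x,y\in\Pi$, the area integral of $\frac{(\overline{w-y})^{m_3}}{(x-w)^{m_1}(w-y)^{m_2}}$ over $\Pi^c$ equals a boundary integral which in turn vanishes. There are two separate things to establish: (i) the reduction of the area integral to the boundary integral via a complex Green/Stokes formula, and (ii) the vanishing of the boundary integral. The condition $m_1+m_2-m_3>2$ is exactly the decay condition $|w|^{-(m_1+m_2-m_3)}$ with exponent $>2$ needed for absolute convergence of the area integral over the unbounded region $\Pi^c$ and for the boundary terms at infinity to disappear.

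For step (i), I would note that the integrand is, away from the singularities $w=x$ and $w=y$ (both of which lie in $\Pi$, hence outside the domain of integration $\Pi^c$), of the form $\bar\partial_w \Phi(w)$ for an explicit primitive. Indeed, since $\bar\partial_w \overline{(w-y)} = 1$, we have
$$\bar\partial_w\left[\frac{(\overline{w-y})^{m_3+1}}{(m_3+1)(x-w)^{m_1}(w-y)^{m_2}}\right] = \frac{(\overline{w-y})^{m_3}}{(x-w)^{m_1}(w-y)^{m_2}}$$
because the factor $(x-w)^{-m_1}(w-y)^{-m_2}$ is holomorphic in $w$ on $\Pi^c$. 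Then Green's formula / the $\bar\partial$-version of Stokes (as in \cite[Theorem 2.9.1]{AstalaIwaniecMartin}, applied on $\Pi^c$ intersected with a large disk $R\D$ and letting $R\to\infty$) converts $\int_{\Pi^c}\bar\partial_w\Phi\,dm(w)$ into the contour integral $\frac{1}{2i}\int_{\partial\Pi^c}\Phi\,dw$ over the boundary, with the correct orientation; the arc at infinity contributes $O(R\cdot R^{-(m_1+m_2-m_3)})\to 0$ precisely when $m_1+m_2-m_3>2$. (One should keep track of orientation: $\partial(\Pi^c)$ traversed so that $\Pi^c$ is on the left, which is the opposite orientation to $\partial\Pi$ as the boundary of $\Pi$, absorbed by a sign.) This yields the first equality, namely the area integral equals $\int_{\partial\Pi}\frac{(\overline{w-y})^{m_3+1}}{(x-w)^{m_1}(w-y)^{m_2}}\,dw$ up to the normalizing constant, which I'd match against the stated form.

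For step (ii), the vanishing of the line integral, I would parametrize $\partial\Pi$ as a straight line, $w=w_0+t e^{i\theta}$ with $t\in\R$ and $e^{i\theta}$ a fixed unit direction, so $dw = e^{i\theta}\,dt$. The key observation is that on this line, for any point $p\in\Pi$ (in particular $p=x$ and $p=y$), the quantity $\overline{w-p}$ is related to $w-p$ by reflection: writing $a:=e^{-i\theta}(w_0-p)$, which has $\imag a$ of a fixed sign (since $p$ is off the line on the $\Pi$-side), we get $w-p = e^{i\theta}(t+a)$ and $\overline{w-p}=e^{-i\theta}(t+\bar a)$. Substituting, the integrand becomes $e^{i\theta}\cdot(\text{rational function of }t)$ where the rational function has all its poles at $t=-a$ (from $w-y$, in the lower or upper half-plane depending on the sign of $\imag a$) and $t=-b$ (from $x-w$, on the \emph{same} side because $x\in\Pi$ too), with numerator degree $m_3+1$ and denominator degree $m_1+m_2$, so net decay $t^{-(m_1+m_2-m_3-1)}$ with $m_1+m_2-m_3-1>1$, hence integrable. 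Closing the contour in the half-plane of $t$ \emph{not} containing the poles (both poles being on the same side), the residue sum is zero, so the integral vanishes. The main obstacle — really the only subtle point — is bookkeeping the orientation and the sign of $\imag a$ to be sure both poles $-a$ (from $(w-y)^{m_2}$) and $-b$ (from $(x-w)^{m_1}$) genuinely lie in the \emph{same} open half-plane of the $t$-variable, which is exactly where "$x,y\in\Pi$" (the same side) is used; once that is pinned down, the contour-closing argument gives $0$ immediately.
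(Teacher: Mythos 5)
Your argument is correct and follows essentially the same route as the paper: Green's (Stokes) formula with the explicit $\bar\partial$-primitive turns the area integral over $\Pi^c$ into the boundary integral (the decay $m_1+m_2-m_3>2$ killing the arc at infinity, and the identity holding up to a harmless constant, just as in the paper), and then Cauchy's theorem gives the vanishing since both poles lie on the $\Pi$-side of the line. The only cosmetic difference is that the paper normalizes $\Pi$ to the upper half plane so that $\overline{w-y}=w-\bar y$ on $\partial\Pi$ and closes the contour directly, whereas you parametrize the line and close in the $t$-variable; these are the same computation in rotated coordinates.
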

\begin{proof}
Without loss of generality, we may assume that $\Pi$ is the upper half plane. For a suitable constant $c$, Green's and Cauchy's theorems imply that
$$\begin{aligned}
\int_{ \Pi^c}\frac{c(\overline{w-y})^{m_3}}{({x-w})^{m_1}({w-y})^{m_2}}dm(w) 
& = \int_{\partial \Pi}\frac{(\overline{w-y})^{m_3+1}}{({x-w})^{m_1}({w-y})^{m_2}} dw\\
& =\int_{ \partial \Pi}\frac{(w-\overline{y})^{m_3+1}}{({x-w})^{m_1}({w-y})^{m_2}} dw = 0.\end{aligned}
$$
\end{proof}

We will use an auxiliary function.
\begin{definition}
Let us define
\begin{equation*}
h_{m}(z):=\int_{\partial\Omega}\frac{(\overline{\tau-z})^{m}}{\tau-z} \,d\tau  \mbox{\,\,\,\, for every }z\in\Omega.
\end{equation*}
\end{definition}
By \cite[Proposition 3.6]{PratsQuasiconformal} the weak derivatives of order $m$ of $h_{m}$ are 
\begin{equation}\label{eqDerivativesBeurlings}
\partial^{j}\bar\partial^{m-j}h_{m}=c_{m,j}\Beurling^{j}\chi_\Omega \mbox{, \,\,\,\, for }0\leq j\leq m.
\end{equation} 
To shorten notation, we will write $H^j_m=\partial^{j}h_{m}$. 

Combining \rf{eqDerivativesBeurlings} with Lemma \ref{lemBeurlingIteratesCharacteristic}, one obtains the following:
\begin{lemma}\label{lemHjm}
Let $0\leq j\leq m$, $0<s<1$, $sp>2$ and let $\Omega$ be a bounded $B^{s+1-\frac1p}_{p,p}$-domain. Then 
$$\nabla^{m-j}H^j_m \in B^s_{p,p}(\Omega).$$
\end{lemma}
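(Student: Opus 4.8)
The plan is to deduce this directly from the identity \rf{eqDerivativesBeurlings} together with Lemma \ref{lemBeurlingIteratesCharacteristic}; there is essentially no obstacle here, and the only care needed is bookkeeping between Wirtinger and real derivatives.

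First I would rewrite the real derivatives in Wirtinger form. Using $\partial_x=\partial+\bar\partial$ and $\partial_y=i(\partial-\bar\partial)$, every component of $\nabla^{m-j}H^j_m=\nabla^{m-j}(\partial^j h_m)$ is a finite linear combination, with coefficients depending only on $m$ and $j$, of terms $\partial^a\bar\partial^b(\partial^j h_m)=\partial^{a+j}\bar\partial^{\,b}h_m$ with $a,b\ge 0$ and $a+b=m-j$. Writing $j':=a+j$, so that $b=m-j'$, each such term is $\partial^{j'}\bar\partial^{\,m-j'}h_m$ with $j\le j'\le m$; in particular $0\le j'\le m$, so \rf{eqDerivativesBeurlings} applies and gives $\partial^{j'}\bar\partial^{\,m-j'}h_m=c_{m,j'}\,\Beurling^{j'}\chi_\Omega$.

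Thus every component of $\nabla^{m-j}H^j_m$ is a finite linear combination of the functions $\Beurling^{j'}\chi_\Omega$ with $j\le j'\le m$. Since $sp>2>1$ and $\Omega$ is a bounded $F^{s+1-\frac1p}_{p,p}$-domain, its outward unit normal vector lies in $F^{s-\frac1p}_{p,p}(\partial\Omega)$ (Proposition \ref{propoSameAsOldies}), and Lemma \ref{lemBeurlingIteratesCharacteristic} with $q=p$ yields $\Beurling^{j'}\chi_\Omega\in F^s_{p,p}(\Omega)$ with $\norm{\Beurling^{j'}\chi_\Omega}_{F^s_{p,p}(\Omega)}\lesssim\norm{\nu}_{F^{s-1/p}_{p,p}(\partial\Omega)}$ for each such $j'$. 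Summing the finitely many contributions gives $\nabla^{m-j}H^j_m\in F^s_{p,p}(\Omega)$, which is the claim. The only point worth spelling out is that differentiating $\partial^j h_m$ never decreases the number of holomorphic derivatives below $j$ and never pushes the total order above $m$ (since $a+j\le (m-j)+j=m$), so one always lands in the range of indices covered by \rf{eqDerivativesBeurlings} and no negative powers of $\partial$ arise.
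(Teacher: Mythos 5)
Your argument is correct and is exactly the paper's proof: the paper obtains the lemma by combining the identity \rf{eqDerivativesBeurlings} with Lemma \ref{lemBeurlingIteratesCharacteristic}, and your Wirtinger-versus-real-derivative bookkeeping (each component of $\nabla^{m-j}\partial^{j}h_m$ being a combination of $\partial^{j'}\bar\partial^{\,m-j'}h_m$ with $j\le j'\le m$) just spells out the step the paper leaves implicit.
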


Given a $j$ times differentiable function $f$, we will write 
\begin{equation}\label{eqTaylor}
P^{j}_z(f)(\xi)=\sum_{|\vec{i}|\leq j}\frac{D^{\vec{i}}f(z)}{\vec{i}!} (\xi-z)^{\vec{i}}
\end{equation}
for its $j$-th degree Taylor polynomial centered in the point $z$.  Note that here we are using the standard multi-index notation for the powers.

\begin{proposition}[see {\cite[Proposition 3.6]{PratsQuasiconformal}}]\label{propoKernelExpression}
Let $\Omega$ be a bounded Lipschitz domain, and let $m\geq 1$. 
Then, for every pair $z, \xi \in \Omega$ with $z\neq \xi$, we have that
\begin{equation}\label{eqExpressionTaylorAndMore}
K_{m}(z,\xi)=c_{m} \partial \Beurling\chi_\Omega (z) \frac{(\overline{\xi-z})^{m-1}}{(\xi-z)^{m+1}} + \sum_{j\leq m} c_{m,j} \frac{H^j_m(\xi)-P^{m-j}_z H^j_m(\xi)}{(\xi-z)^{m+3-j}},
\end{equation}
\end{proposition}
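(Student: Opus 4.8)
The plan is to reduce everything to a partial-fraction computation in the variable $w$ in which all but finitely many terms collapse after differentiation. I start from the boundary form of the kernel in \eqref{eqKernelVectorM}. Since $\overline{w-\xi}$ is antiholomorphic in $\xi$, one has $\frac{(\overline{w-\xi})^{m}}{(w-\xi)^{m+1}}=\frac{1}{m!}\,\partial_\xi^{m}\!\left(\frac{(\overline{w-\xi})^{m}}{w-\xi}\right)$, so that, $\partial\Omega$ being compact and rectifiable and the integrand smooth for $z\neq\xi$, we may differentiate under the integral and write
$$K_{m}(z,\xi)=\frac{c}{m!}\,\partial_\xi^{m}G(z,\xi),\qquad G(z,\xi):=\int_{\partial\Omega}\frac{(\overline{w-\xi})^{m}}{(z-w)^{3}(w-\xi)}\,dw.$$
Then I would decompose $\frac{1}{(z-w)^{3}(w-\xi)}$ into partial fractions in $w$: being a proper rational function of $w$, it equals $\sum_{k=1}^{3}\frac{A_{k}}{(w-z)^{k}}+\frac{B}{w-\xi}$ with $B=(z-\xi)^{-3}$ and the $A_{k}$ explicit rational functions of $z-\xi$. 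This splits $G$ into four boundary integrals.

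Applying $\partial_\xi^{m}$ is where the collapse happens. In the term coming from the pole at $w=\xi$ we get $\partial_\xi^{m}\bigl((z-\xi)^{-3}h_{m}(\xi)\bigr)$; the Leibniz rule together with $\partial_\xi^{\,m-a}h_{m}=H^{m-a}_{m}$ and reindexing $j=m-a$ produces $\sum_{j\le m}c_{m,j}\,(z-\xi)^{-(m+3-j)}H^{j}_{m}(\xi)$, exactly the ``$H^{j}_{m}(\xi)$'' half of the sum in \eqref{eqExpressionTaylorAndMore}. In each term coming from a pole at $w=z$, any $\partial_\xi$ landing on the integral would hit $(\overline{w-\xi})^{m}$ and annihilate it, so Leibniz forces all $m$ derivatives onto $A_{k}$; one is left with a finite sum of terms $(z-\xi)^{-(m+k)}\int_{\partial\Omega}\frac{(\overline{w-\xi})^{m}}{(w-z)^{k}}\,dw$, $k=1,2,3$. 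Expanding $(\overline{w-\xi})^{m}=\sum_{i=0}^{m}\binom{m}{i}(\overline{w-z})^{i}(\overline{z-\xi})^{m-i}$ inside and using the elementary identity $\int_{\partial\Omega}\frac{(\overline{w-z})^{i}}{(w-z)^{k}}\,dw=\frac{1}{(k-1)!}\partial_z^{k-1}h_{i}(z)$, each of these becomes a finite combination of derivatives $\partial_z^{\bullet}h_{i}(z)$ with $i\le m$, multiplied by monomials in $(z-\xi)$ and $(\overline{z-\xi})$.

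The last step is to recognise this combination. The key input is the identity $\partial^{a}\bar\partial^{b}h_{m}(z)=(-1)^{b}\frac{m!}{(m-b)!}\,\partial^{a}h_{m-b}(z)$ for $b\le m$, obtained by differentiating the definition of $h_{m}$ and using $\bar\partial_z(\overline{\tau-z})^{m}=-m(\overline{\tau-z})^{m-1}$ together with $\partial_z(\tau-z)^{-1}=(\tau-z)^{-2}$. This says precisely that the Taylor coefficients at $z$ of $H^{j}_{m}$ are, up to explicit constants, the derivatives $\partial^{\bullet}h_{i}(z)$, $i\le m$, just produced. Matching powers of $(\xi-z)$ and $(\overline{\xi-z})$, one verifies that the double sum over $(i,k)$ reassembles exactly $-\sum_{j\le m}c_{m,j}\,(z-\xi)^{-(m+3-j)}P^{m-j}_{z}H^{j}_{m}(\xi)$, \emph{except} for the two ``out-of-range'' contributions $i=0$ and $i=1$ arising from the order-$3$ pole at $w=z$. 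The $i=0$ one vanishes because $h_{0}(z)=\int_{\partial\Omega}(\tau-z)^{-1}\,d\tau$ is constant on $\Omega$ by Cauchy's theorem; the $i=1$ one equals a constant times $\frac{(\overline{\xi-z})^{m-1}}{(\xi-z)^{m+1}}\,\partial_z^{2}h_{1}(z)$, and by \eqref{eqDerivativesBeurlings} (with $m=j=1$) one has $\partial h_{1}=c_{1,1}\Beurling\chi_\Omega$, hence $\partial_z^{2}h_{1}=c_{1,1}\,\partial\Beurling\chi_\Omega$. Collecting the pieces yields \eqref{eqExpressionTaylorAndMore}. I expect the genuine obstacle to be this final bookkeeping: tracking the binomial factors and factorials so that the $(i,k)$-sum produces exactly the Taylor polynomials $P^{m-j}_{z}H^{j}_{m}$ with the correct constants $c_{m,j}$, and checking that one and only one leftover term survives; everything else is routine. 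This is carried out in \cite[Proposition 3.6]{PratsQuasiconformal}.
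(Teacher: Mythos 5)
The paper never actually proves Proposition \ref{propoKernelExpression}: it is imported as stated from \cite[Proposition 3.6]{PratsQuasiconformal}, so there is no internal argument to compare yours against, and your sketch has to be judged on its own merits. Its skeleton is correct. Writing $K_m(z,\xi)=\frac{1}{m!}\partial_\xi^m\int_{\partial\Omega}\frac{(\overline{w-\xi})^m}{(z-w)^3(w-\xi)}\,dw$, decomposing the rational function of $w$ into partial fractions, letting the pole at $w=\xi$ generate the $H^j_m(\xi)$-half of \rf{eqExpressionTaylorAndMore} via Leibniz, and noting that for the poles at $w=z$ all $\xi$-derivatives must fall on the coefficients, are all legitimate steps; so is the endgame, where the only contributions not absorbed by the Taylor polynomials are $\partial^{k-1}h_0$ for $k=2,3$ (zero because $h_0\equiv 2\pi i$ on $\Omega$) and $\partial^2h_1=c\,\partial\Beurling\chi_\Omega$ by \rf{eqDerivativesBeurlings}, which is precisely the first term of \rf{eqExpressionTaylorAndMore}. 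I verified that this scheme reproduces the stated formula in full for $m=1$.

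Two caveats about the part you defer. First, a bookkeeping slip: the coefficient of $(w-z)^{-k}$ in the partial fraction is a multiple of $(\xi-z)^{-(4-k)}$, so after the $m$ derivatives the order-$k$ pole is paired with $(\xi-z)^{-(m+4-k)}$, not $(z-\xi)^{-(m+k)}$; your later identification of the Beurling term (order-$3$ pole with $(\xi-z)^{-(m+1)}$) tacitly uses the correct pairing, and note that the uncovered $i=0$ term also arises from the order-$2$ pole, not only the order-$3$ one. Second, ``matching powers'' understates what the reassembly requires: for $m\ge3$ the Taylor polynomials $P^{m-j}_zH^j_m$ contain terms carrying $\partial^a h_i(z)$ with $a\ge3$, which have no counterpart among poles of order at most $3$ and therefore must cancel among the different $j$'s. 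With the constants $c_{m,j}$ forced by the $w=\xi$ term, this cancellation amounts to $\sum_{j=0}^{a}(-1)^j\binom{m}{j}\frac{(m+2-j)!}{(a-j)!}=0$ for $3\le a\le m$, which does hold (it is an $m$-th finite difference of a polynomial of degree $m+2-a<m$), and the matching of the remaining terms reduces to three further elementary identities, one per pole order, which also check out. So the deferred computation is finite and genuinely goes through, but it involves these cancellations rather than a term-by-term relabelling; since you, like the paper itself, ultimately point to \cite{PratsQuasiconformal} for these details, the proposal stands as a correct sketch.
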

Note that the Taylor polynomials are well defined because Lemma \ref{lemHjm} implies the required differentiability. 

\begin{proof}[Proof of Proposition \ref{lemSmallNormCloseToBoundary}]
We assume that $p\geq q$, since otherwise, one has that $\norm{\mathcal{R}_m f}_{F^s_{p,q}(\Omega)}\lesssim \norm{\mathcal{R}_m f}_{F^s_{p,p}(\Omega)}$ (see Proposition \ref{theoTriebelEmbeddingsOmega}). 

Let $0<\rho<1$ to be fixed later on. For $f\in F^s_{p,q}(\Omega)$, let us write
\begin{equation}\label{eqDerivativeLocal}
D^s_q f(x):= \left(\int_{B\left(x,\rho \delta_\Omega(x)\right)} \frac{|f(x)-f(y)|^q}{|x-y|^{sq+2}} \, dy \right)^{\frac{1}{q}}.
\end{equation}
We want to show that
\begin{equation*}
\squared{0}:=\norm{D^s_q \mathcal{R}_m f }_{L^p(\Omega)}=\left(\sum_{Q\in \mathcal{W}}  \norm{D^s_q \mathcal{R}_m f }_{L^p(Q)}^p\right)^\frac1p \leq C_h \norm{f}_{\mathcal{C}^h(\Omega)}.
\end{equation*}

For every Whitney cube $Q$ we choose a bump function $\chi_{3Q} \leq  \varphi_Q \leq \chi_{4Q}$ with $|\nabla\varphi_Q| \leq \frac{1}{\ell(Q)}$. Then,
\begin{align}\label{eqBreakingReflection}
\squared{0}^{\,p} 
\nonumber & \lesssim \sum_{Q} |f_Q|^p \norm{D^s_q \mathcal{R}_m 1 }_{L^p(Q)}^p + \sum_{Q} \norm{D^s_q \mathcal{R}_m (f-f_Q)\varphi_Q }_{L^p(Q)}^p \\
	&\quad  + \sum_{Q}  \norm{D^s_q \mathcal{R}_m (f-f_Q)(1-\varphi_Q)}_{L^p(Q)}^p 
	 = \squared{1}+\squared{2}+\squared{3}.
\end{align}
We will show that each term is bounded by $C \norm{f}_{\mathcal{C}^h(\Omega)}^p$. 

Let us begin by the first term in the right-hand side of \rf{eqBreakingReflection}, which is the easiest one. Indeed, for any cube $Q$, the mean $|f_Q|\leq \norm{f}_{L^\infty(\Omega)}$.  On the other hand the boundedness of $\mathcal{R}_m$ in the Triebel-Lizorkin space under consideration implies that $\mathcal{R}_m 1 \in F^s_{p,q}(\Omega)$. By Theorem \ref{theoNormOmegapgtrq}, this implies
\begin{equation*}
\squared{1} \lesssim \norm{f}_{L^\infty(\Omega)}^p \norm{ \mathcal{R}_m 1 }_{F^s_{p,q}(\Omega)}^p .
\end{equation*}

Next, let us face the local part in \rf{eqBreakingReflection}. We fix the following notation: when dealing with the difference of a function $F$ between two points, we will write
$$F[(x)-(y)]:=F(x)-F(y).$$
Let $x,y\in Q\in\mathcal{W}$. Then, since $\Beurling [\chi_{\Omega^c} \Beurling^m(\chi_\Omega f)]$ is analytic on $\Omega$, it has continuous derivatives and, thus, by the mean value theorem 
$$|\mathcal{R}_m [(f-f_Q)\varphi_Q][(x)-(y)]| \leq \norm{\partial\mathcal{R}_m [(f-f_Q)\varphi_Q]}_{L^\infty(2Q)} |x-y| $$
and, fixing a convenient $\rho$ in \rf{eqDerivativeLocal}, we get that
\begin{align}\label{eq2FirstStep}
\squared{2}
\nonumber	& \leq \sum_{Q} \int_Q \left(\int_{2Q} \frac{|\mathcal{R}_m [(f-f_Q)\varphi_Q][(x)-(y)]|^q}{|x-y|^{sq+2}} \, dy \right)^{\frac{p}{q}} dx\\
\nonumber	& \leq \sum_{Q} \norm{\partial\mathcal{R}_m [(f-f_Q)\varphi_Q]}_{L^\infty(2Q)}^p \int_Q \left(\int_{2Q} \frac{ |x-y|^q}{|x-y|^{sq+2}} \, dy \right)^{\frac{p}{q}} dx\\
	& \lesssim \sum_{Q} \ell(Q)^{2+(1-s)p} \norm{\partial\mathcal{R}_m [(f-f_Q)\varphi_Q]}_{L^\infty(2Q)}^p.
\end{align}
Take $z\in 2Q$. Then
$$\partial\mathcal{R}_m [(f-f_Q)\varphi_Q](z) 
= \int_{\Omega^c} \frac{c_m}{(z-w)^{3}} \int_\Omega\frac{(\overline{w-\xi})^{m-1} (f(\xi)-f_Q)\varphi_Q(\xi)}{(w-\xi)^{m+1}}\, dm(\xi)\, dm(w).$$
It is immediate to check that this double integral is absolutely convergent and, thus, Fubini's Theorem applies and it follows that 
$$\partial\mathcal{R}_m [(f-f_Q)\varphi_Q](z) =  \int_{4Q} (f(\xi)-f_Q)\varphi_Q(\xi) \int_{\Omega^c} \frac{c_m(\overline{w-\xi})^{m-1}}{(z-w)^{3}\,(w-\xi)^{m+1}}\, dm(w)\, dm(\xi).$$
Next, we consider the half-plane $\Pi_Q$ from Lemma \ref{lemBoundForBeurling}.
Recall that $\dist(Q,\Pi_Q^c)\approx \ell(Q)$.   Then, Lemma \ref{lemKernelVanishes} implies that
$$\begin{aligned}
\partial\mathcal{R}_m [(f-f_Q)\varphi_Q](z) 
	& =  \int_{4Q} (f(\xi)-f_Q)\varphi_Q(\xi) \\
		& \quad \cdot \left(\int_{\Omega^c} - \int_{\Pi_Q^c} \right)\frac{c_m(\overline{w-\xi})^{m-1}}{(z-w)^{3}\,(w-\xi)^{m+1}}\, dm(w) \, dm(\xi).
\end{aligned}
$$

Since $z\in 2Q$, taking absolute values we obtain
\begin{align*}
|\partial\mathcal{R}_m [(f-f_Q)\varphi_Q](z)| 
	& \leq  \int_{4Q} |f(\xi)-f_Q| \int_{\Omega\Delta \Pi_Q} \frac{c_m}{|z-w|^{3}\,|w-\xi|^2}\, dm(w)\, dm(\xi) \\
	& \lesssim \ell(Q)^2 \norm{f}_{L^\infty (\Omega)} \int_{\Omega\Delta \Pi_Q} \frac{1}{|w-z|^{5}}\, dm(w).
\end{align*}

By {formula \rf{eqBoundForBeurling} in Lemma \ref{lemBoundForBeurling}}, we get
\begin{align*}
|\partial\mathcal{R}_m [(f-f_Q)\varphi_Q](z)| 
	& \lesssim \ell(Q)^2 \norm{f}_{L^\infty (\Omega)} \left(\sum_{\rho P \supset Q: \ell(P) < R} \frac{\beta_{(1)}(P)}{\ell(P)^{3}} + R^{-3}.\right)
\end{align*}
Back to \rf{eq2FirstStep}, we have that
\begin{align*}
\squared{2}
	& \lesssim \norm{f}_{L^\infty (\Omega)}^p  \sum_{Q} \ell(Q)^{2+(3-s)p} \left(\left(\sum_{\rho P \supset Q: \ell(P) < R} \frac{\beta_{(1)}(P)}{\ell(P)^{3}}\right)^p + C\right).
\end{align*}
By Lemma \ref{lemSummingTheBetas} we get
\begin{align*}
\squared{2}
	& \lesssim \norm{f}_{L^\infty (\Omega)}^p \norm{\nu}_{B^{s-\frac1p}_{p,p}(\partial\Omega)}.
\end{align*}

It remains to control the nonlocal part in \rf{eqBreakingReflection}, that is, 
$$\squared{3}\leq \sum_{Q} \int_Q \left(\int_{2Q} \frac{|\mathcal{R}_m [(f-f_Q)(1-\varphi_Q)][(x)-(y)]|^q}{|x-y|^{sq+2}} \, dy \right)^{\frac{p}{q}} dx.$$
As in \rf{eq2FirstStep}, by the mean value property of analytic functions, we have that
\begin{align}\label{eq3FirstStep}
\squared{3}
\nonumber	& \lesssim \sum_{Q} \ell(Q)^{2+(1-s)p} \norm{\partial\mathcal{R}_m [(f-f_Q)(1-\varphi_Q)]}_{L^\infty(2Q)}^p\\
	& \lesssim \sum_{Q} \ell(Q)^{2(1-p)+(1-s)p} \norm{\partial\mathcal{R}_m [(f-f_Q)(1-\varphi_Q)]}_{L^1(\frac52Q)}^p.
\end{align}

Take $z\in \frac52Q$. Then
$$\begin{aligned}
\partial\mathcal{R}_m &[(f-f_Q)(1-\varphi_Q)](z) 
\\&= \int_{\Omega^c} \frac{c_m}{(z-w)^{3}} \int_\Omega\frac{(\overline{w-\xi})^{m-1} (f(\xi)-f_Q)(1-\varphi_Q(\xi))}{(w-\xi)^{m+1}}\, dm(\xi)\, dm(w).\end{aligned}
$$
This double integral is absolutely convergent:
$$\begin{aligned}
\int_{\Omega^c} \int_\Omega \frac{|f(\xi)-f_Q||1-\varphi_Q(\xi)|}{|z-w|^{3}|w-\xi|^{2}}\, &dm(\xi)\, dm(w)\\&\lesssim \norm{f}_{L^\infty} \int_{\Omega^c}  \frac{|\log(\delta_\Omega(w))|+|\log(\diam(\Omega))|}{|z-w|^{3}}\,  dm(w).\end{aligned}
$$
Thus, we can apply Fubini's Theorem,  \rf{eqKernelVectorM} and \rf{eqExpressionTaylorAndMore} to get
\begin{align*}
\partial\mathcal{R}_m &[(f-f_Q)(1-\varphi_Q)](z) \\
\nonumber	 &=  \int_{\Omega\setminus 3Q} (f(\xi)-f_Q)(1-\varphi_Q(\xi)) \int_{\Omega^c} \frac{c_m(\overline{w-\xi})^{m-1}}{(z-w)^{3}\,(w-\xi)^{m+1}}\, dm(w)\, dm(\xi)\\
\nonumber	& = c_m  \partial \Beurling\chi_\Omega (z)  \int_{\Omega\setminus 3Q} (f(\xi)-f_Q)(1-\varphi_Q(\xi))  \frac{(\overline{\xi-z})^{m-1}}{(\xi-z)^{m+1}} \, dm(\xi) \\
	& \quad +  \int_{\Omega\setminus 3Q} (f(\xi)-f_Q)(1-\varphi_Q(\xi)) \sum_{j\leq m} \frac{c_{m,j} (H^j_m(\xi)-P^{m-j}_z H^j_m(\xi))}{(\xi-z)^{m+3-j}} dm(\xi).
\end{align*}
Whenever $z\in \Omega\setminus \supp F$, we have that 
$$\Beurling^m_\Omega F(z)=c_m \int_{\Omega\cap \supp F} F(\xi)  \frac{(\overline{\xi-z})^{m-1}}{(\xi-z)^{m+1}} \, dm(\xi).$$ Thus, we can apply this identity in the first term of the right-hand side above, and back to \rf{eq3FirstStep}, we obtain
\begin{align}\label{eqBreak3}
\squared{3}
\nonumber	& \lesssim \sum_{Q} \ell(Q)^{2(1-p) + (1-s)p} \norm{\partial \Beurling\chi_\Omega (z)  \Beurling^m_\Omega [(f-f_Q)(1-\varphi_Q)] (z)}_{L^1_{{z}}(\frac52 Q)}^p \\
\nonumber	 &\,\,\,\,  +  \sum_{j\leq m} \sum_{Q} \ell(Q)^{2(1-p) + (1-s)p}  \norm{\int_{\Omega\setminus 3Q} |f(\xi)-f_Q| \frac{|H^j_m(\xi)-P^{m-j}_z H^j_m(\xi)|}{|\xi-z|^{m+3-j}} \,dm(\xi)}_{L^1_{z}(\frac52 Q)}^p \\
	& = \squared{3.1} + \sum_{j=0}^m \squared{3.2.j}
\end{align}

For the first term in the right-hand side, we have that 
$$\squared{3.1} \leq  \sum_{Q} \ell(Q)^{2+(1-s)p} \norm{\partial \Beurling\chi_\Omega}_{L^\infty(\frac52 Q)}^p\norm{ \Beurling^m_\Omega [(f-f_Q)(1-\varphi_Q)]}_{L^\infty(\frac52 Q)}^p.$$
Using again the half-plane $\Pi_Q$ from Lemma \ref{lemBoundForBeurling}, whose boundary minimizes $\beta_{(1)}(Q)$, for $z\in \frac52 Q$ we can write
$$\partial \Beurling\chi_\Omega(z)= c \left(\int_{\Omega\setminus B(z,\frac12\delta_\Omega)} - \int_{\Pi_Q\setminus B(z,\frac12\delta_\Omega)} \right) \frac{1}{(z-w)^3} dm(w)$$
(see \cite[Lemma 4.2]{CruzTolsa}), where we wrote again $\delta_\Omega(z)$ for $\dist(z,\partial\Omega)$. Taking absolute values, by Lemma \ref{lemBoundForBeurling} we get
$$\norm{\partial \Beurling\chi_\Omega}_{L^\infty(\frac52 Q)}\lesssim \int_{\Omega\Delta \Pi_Q} \frac{1}{\Dist(w,Q)^3}\, dm(w) \lesssim  \sum_{\rho P \supset Q: \ell(P) < R} \frac{\beta_{(1)}(P)}{\ell(P)} + R^{-1}.$$

On the other hand, for $h\leq s-\frac2p$, by \cite[Main Lemma]{MateuOrobitgVerdera} and doing some routine computations, one can check that
\begin{align*}
\norm{ \Beurling^m_\Omega [(f-f_Q)(1-\varphi_Q)]}_{L^\infty(\Omega)}
\nonumber	& \leq \norm{ \Beurling^m_\Omega [(f-f_Q)(1-\varphi_Q)]}_{\mathcal{C}^h(\Omega)}\\& \lesssim_m \norm{(f-f_Q)(1-\varphi_Q)}_{\mathcal{C}^h(\Omega)}
	 \lesssim \norm{f}_{\dot{\mathcal{C}}^h(\Omega)} .
\end{align*}

Summing up, we have seen that
\begin{align*}
\squared{3.1} 
\nonumber	& \leq \norm{f}_{\dot{\mathcal{C}}^h(\Omega)}^p \sum_{Q} \ell(Q)^{2+(1-s)p} \left(\sum_{\rho P \supset Q: \ell(P) < R} \frac{\beta_{(1)}(P)}{\ell(P)} + R^{-1}\right)^p 
\end{align*}
Using Lemma \ref{lemSummingTheBetas} again,
\begin{equation*}
\squared{3.1} \leq C \norm{f}_{ \dot{\mathcal{C}}^h(\Omega)}^p .
\end{equation*}

Consider the term $\squared{3.2.j}$ in \rf{eqBreak3}. We trivially control by the supremum norm of $f$:
\begin{align*}
\squared{3.2.j} 
\nonumber	& \leq  \norm{f}_{L^\infty(\Omega)}^p \sum_{Q} \ell(Q)^{2(1-p) + (1-s)p}  \norm{\int_{\Omega\setminus 3Q}  \frac{|H^j_m(\xi)-P^{m-j}_z H^j_m(\xi)|}{|\xi-z|^{m+3-j}} \,dm(\xi)}_{L^1_{{z}}(\frac52 Q)}^p.
\end{align*}

{By expressing the integral above as sums of integrals on cubes, we can {complete}  the proof of Proposition \ref{propoKernelExpression} just  {by} checking the following estimate
$$\begin{aligned}
&\sum_{Q\in\W} \ell(Q)^{2(1-p) + (1-s)p}  \left( \int_{\frac52Q}   \sum_{S\in\W} \int_S \frac{|H^j_m(\xi)-P^{m-j}_z H^j_m(\xi)|}{\Dist(Q,S)^{m+3-j}}\,  dm(\xi)dm(z) \right)^p  \\ & \quad\lesssim  \norm{\nabla^{m-j} H^j_m}_{B^s_{p,p}(\Omega)}.\end{aligned}
$$
To obtain this estimate, however, we need to introduce some tools, so we defer its proof to the following section. To be precise, {the above estimate will be}  a consequence of Lemma \ref{lemTaylorToBesov} below with $F=H^j_m$, $n=m-j$. }

{Once this estimate is obtained}, we get that
$$\squared{3.2.j} \lesssim  \norm{f}_{L^\infty(\Omega)}^p \norm{\nabla^{m-j}H^j_m}_{B^s_{p,p}(\Omega)},$$
and by Lemma \ref{lemHjm}, the last factor is finite.
\end{proof}

\subsection{Meyer's polynomials}\label{secMeyer}
The following lemma is true for every Whitney covering.
\begin{lemma}[{See \cite[Lemma 3.11]{PratsTolsa}}]
Let $d\geq 2$. Assume that $r>0$. If $\eta>0$, for every $Q\in\mathcal{W}$ we have
\begin{equation}\label{eqMaximalFar}
 \sum_{S\in\W}  \frac{\ell(S)^d}{D(Q,S)^{d+\eta}}\lesssim \frac{1}{\ell(Q)^\eta}.
 \end{equation}
\end{lemma}

\begin{definition}\label{defChain}
 If $\Omega$ is a Lipschitz domain, for every $Q,S\in\W$, we can find a chain $[Q,S]$, that is, a sequence of cubes $(Q_1,\cdots ,Q_N)$ satisfying $\overline{Q_j}\cap\overline{Q_{j+1}}\neq \emptyset$ for all $j<N$ with $Q_1=Q$, $Q_N=S$, and a central cube $Q_S:=Q_{j_0}$ for $j_0\leq N$ such that the following holds:
\begin{equation}\label{eqChainDistances}
\mbox{If $j\leq j_0$, then $\ell(Q_j)\approx \Dist(Q, Q_j)$, while $\ell(Q_j)\approx \Dist(Q_j,S)$ otherwise,}
\end{equation}
and
\begin{equation}\label{eqChainLength}
\sum_{j=1}^N \ell(Q_j)\lesssim \Dist(Q,S)\approx \ell(Q_S).
\end{equation}
\end{definition}
The constants involved depend on the Whitney constants and the Lipschitz character of the domain. The interested reader may find more information in \cite[Section 3]{PratsTolsa}.
In that paper one shows that the number of cubes in a chain of a given side-length is uniformly bounded, that is 
\begin{equation}\label{eqChainNumberOfCubesUniformlyBounded}
\#\{P \in [Q,S]: \ell(P) = \ell_0 \} < C.
\end{equation}

More generally, a \emph{uniform} domain is a domain having a Whitney covering such that for every pair of cubes there exists a chain satisfying \rf{eqChainDistances} and  \rf{eqChainLength}. Moreover, as a consequence it also satisfies  \rf{eqChainNumberOfCubesUniformlyBounded} (see \cite{PratsSaksman}).

The proof of Proposition \ref{lemSmallNormCloseToBoundary} above depends on the following {estimate:}
\begin{lemma}\label{lemTaylorToBesov}
Let $n\in\N$ and $0<s<1$. Let $\Omega$ be a uniform domain with Whitney covering $\mathcal{W}$, and let $F\in C^n(\Omega)$ such that its weak derivatives $\nabla^n F\in B^s_{p,p}(\Omega)$. Then
$$\begin{aligned}
\squared{\emph{N}} & :=\sum_{Q\in\W} \ell(Q)^{2(1-p) + (1-s)p}  \left( \int_{\frac52Q}   \sum_{S\in\W} \int_S \frac{|F(\xi)-P^{n}_z F(\xi)|}{\Dist(Q,S)^{n+3}}\, dm(\xi)dm(z) \right)^p\\&   \lesssim  \norm{\nabla^n F}_{B^s_{p,p}(\Omega)} 
\end{aligned}
$$
\end{lemma}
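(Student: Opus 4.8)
The plan is to control the Taylor remainder $F(\xi)-P^n_zF(\xi)$ by telescoping through a coherent family of local polynomial approximants of $F$ — Meyer's polynomials — and then to sum the resulting estimate over the Whitney covering. Write $g:=\nabla^n F$, a continuous function lying in $F^s_{p,p}(\Omega)$ with $0<s<1$. First I would record, via Theorem \ref{theoNormOmegapgtrq} (with $k=0$, $q=p$) and the standard passage from its local seminorm to oscillations on Whitney cubes, that one can attach to each $P\in\mathcal{W}$ a Dorronsoro-type oscillation $\varepsilon(P)\ge 0$ of $g$ at the scale of $P$ (a finite average of $\beta$-coefficients of $g$ over cubes comparable to $P$) such that $\sum_{P\in\mathcal{W}}\varepsilon(P)^p\,\ell(P)^{2-sp}\lesssim\norm{g}_{F^s_{p,p}(\Omega)}^p$; it then suffices to bound $\squared{N}$ by the left-hand side. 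Next I would fix, for each $P\in\mathcal{W}$ (enlarging the Whitney constants so that $3P\subset\Omega$), the polynomial $\mathbf{P}_P$ of degree at most $n$ for which $\mathbf{P}_P-F$ is $L^2(3P)$-orthogonal to all polynomials of degree $\le n$ (the planar analogue of Dorronsoro's $\mathbf{R}^n_I$). Iterating Dorronsoro's inequality over the scales gives $\norm{\nabla^j(F-\mathbf{P}_P)}_{L^\infty(3P)}\lesssim\ell(P)^{n-j}\varepsilon(P)$ for $0\le j\le n$ and, for neighbouring cubes $\bar P\cap\bar{P'}\neq\emptyset$, the compatibility bound $\norm{\mathbf{P}_P-\mathbf{P}_{P'}}_{L^\infty(P\cup P')}\lesssim\ell(P)^n\varepsilon(P)$; combined with the elementary dilation bound $\norm{R}_{L^\infty(B_t)}\lesssim(t/\ell(P))^n\norm{R}_{L^\infty(P)}$, valid for $R$ of degree $\le n$, $t\ge\ell(P)$ and $B_t$ a ball of radius $t$ meeting $P$, these propagate the comparison to points far from $P$.

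With this in hand, for fixed $Q,S\in\mathcal{W}$, $z\in\frac52 Q$, $\xi\in S$, I would take a chain $[Q,S]=(P_1,\dots,P_N)$ with $P_1=Q$, $P_N=S$ as in Definition \ref{defChain}, so that by \rf{eqChainDistances}--\rf{eqChainLength} every $P_j$ has $\ell(P_j)\lesssim\Dist(Q,S)$ and lies within distance $\lesssim\Dist(Q,S)$ of $\xi$ and of $Q$, and write
\begin{equation*}
F(\xi)-P^n_zF(\xi)=\bigl(F(\xi)-\mathbf{P}_S(\xi)\bigr)+\sum_{j=1}^{N-1}\bigl(\mathbf{P}_{P_{j+1}}(\xi)-\mathbf{P}_{P_j}(\xi)\bigr)+\bigl(\mathbf{P}_Q(\xi)-P^n_zF(\xi)\bigr).
\end{equation*}
The first summand is $\lesssim\ell(S)^n\varepsilon(S)\le\Dist(Q,S)^n\varepsilon(S)$; each middle summand is $\lesssim\Dist(Q,S)^n\varepsilon(P_j)$ by the compatibility bound and the dilation bound; and the last is $\lesssim\Dist(Q,S)^n\bigl(\varepsilon(Q)+|g(z)-g_{3Q}|\bigr)$, since $\mathbf{P}_Q-P^n_zF$ has degree $\le n$ and its derivatives of order $j<n$ at $z\in 3Q$ are $\lesssim\ell(Q)^{n-j}\varepsilon(Q)$ while those of order $n$ are $\lesssim\varepsilon(Q)+|g(z)-g_{3Q}|$. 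Only the last bound depends on $z$, and $\int_{\frac52 Q}|g(z)-g_{3Q}|\,dm(z)\lesssim\ell(Q)^2\varepsilon(Q)$, so
\begin{equation*}
\int_{\frac52 Q}|F(\xi)-P^n_zF(\xi)|\,dm(z)\lesssim\ell(Q)^2\,\Dist(Q,S)^n\sum_{P\in[Q,S]}\varepsilon(P).
\end{equation*}

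Substituting this into $\squared{N}$ (Fubini to integrate in $z$ first, $\int_S dm(\xi)\approx\ell(S)^2$, and cancelling the factor $\Dist(Q,S)^n$ against the denominator) reduces the lemma to
\begin{equation*}
\sum_{Q\in\mathcal{W}}\ell(Q)^{2+p-sp}\left(\sum_{S\in\mathcal{W}}\frac{\ell(S)^2}{\Dist(Q,S)^{3}}\sum_{P\in[Q,S]}\varepsilon(P)\right)^{\!p}\lesssim\sum_{P\in\mathcal{W}}\varepsilon(P)^p\,\ell(P)^{2-sp}.
\end{equation*}
For $S$ comparable to $Q$ the chain has boundedly many cubes and this piece is immediate; for the remaining interactions I would interchange the order of summation, so the inner sum becomes $\sum_P\varepsilon(P)\sum_{S:\,P\in[Q,S]}\ell(S)^2\Dist(Q,S)^{-3}$, use \rf{eqChainDistances}, \rf{eqChainLength} and \rf{eqChainNumberOfCubesUniformlyBounded} to see that the relevant $S$ satisfy $\Dist(Q,S)\gtrsim\Dist(Q,P)\gtrsim\ell(P)$, apply a Hölder splitting along the chain with a small exponent to move the factor $\sum_{P\in[Q,S]}$ out of the $p$-th power, and then sum the resulting geometric series in $S$ and in $Q$ by the maximal estimates \rf{eqMaximalFar}--\rf{eqMaximalClose} of Lemma \ref{lemmaximal}; this yields the displayed bound and hence $\squared{N}\lesssim\norm{\nabla^n F}_{F^s_{p,p}(\Omega)}^p$.

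I expect the last summation to be the main obstacle: one must organize the three-fold interaction between the source cube $Q$, the chain cubes $P$ carrying the oscillation $\varepsilon(P)$, and the target cube $S$, so that after the Hölder splitting forced by having $\sum_{P\in[Q,S]}\varepsilon(P)$ sit inside a $p$-th power, every geometric series that is produced is summable by Lemma \ref{lemmaximal} together with \rf{eqChainLength} and \rf{eqChainNumberOfCubesUniformlyBounded}. The construction of Meyer's polynomials and the estimates used for them are routine, but they must be set up so that the oscillation $\varepsilon(P)$ that surfaces is precisely the one controlled by $\norm{g}_{F^s_{p,p}(\Omega)}$; the whole argument runs along the lines of the Whitney-chain bookkeeping in \cite{PratsTolsa, PratsSaksman}.
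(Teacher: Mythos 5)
Your overall architecture — approximate $F$ by local polynomials adapted to Whitney cubes, telescope them along the chains of Definition \ref{defChain}, and then sum the resulting weighted series — is the same skeleton as the paper's proof, which replaces $P^n_zF$ by Meyers' polynomials $P^n_QF$ and telescopes those along $[Q,S]$. But there is a genuine gap in how you quantify the local approximation. You claim $\norm{\nabla^j(F-\mathbf{P}_P)}_{L^\infty(3P)}\lesssim \ell(P)^{n-j}\varepsilon(P)$, and likewise pointwise control of $F(\xi)-\mathbf{P}_S(\xi)$ and of $\nabla^j(F-\mathbf{P}_Q)(z)$, with $\varepsilon(P)$ a \emph{single-scale} quantity (``a finite average of $\beta$-coefficients of $g=\nabla^nF$ over cubes comparable to $P$''). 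Under the hypotheses of the lemma this is false: already for $j=n$ it asserts $\norm{g-c_P}_{L^\infty(3P)}\lesssim\varepsilon(P)$, i.e.\ uniform oscillation of $g$ controlled by $L^1$-type oscillations at one scale, which fails for a tall thin spike that has tiny $\beta$-coefficients at scale $\ell(P)$ but is admissible in $F^s_{p,p}$ whenever $sp\le 2$ (the lemma assumes only $0<s<1$ and $\nabla^nF\in F^s_{p,p}\cap C^0$, not $sp>2$). ``Iterating Dorronsoro over the scales'' produces an infinite sum over all scales below $P$, which need not converge without a Hölder embedding, and even when it does, $\varepsilon(P)$ becomes a multi-scale quantity whose Carleson packing $\sum_P\varepsilon(P)^p\ell(P)^{2-sp}\lesssim\norm{g}_{F^s_{p,p}}^p$ requires a further Hardy-type argument you do not supply. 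The fix is to notice that $\xi$ is integrated over $S$ and $z$ over $\frac52Q$, so pointwise bounds are never needed: the paper keeps everything in $L^1$, estimating $\norm{F-P^n_QF}_{L^1(S)}$ via \rf{eqPolyLluny} and the $L^1$-Poincaré inequality \rf{eqPoincare}, and the Taylor-vs-Meyers discrepancy through $\norm{\norm{P^n_QF-P^n_zF}_{L^1(Q)}}_{L^1(\frac52Q,dm(z))}$.

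The second weak point is that you defer precisely the hardest step: the three-fold summation over $Q$, the chain cubes $P$, and $S$. Your proposed route (``Hölder splitting with a small exponent, then geometric series via Lemma \ref{lemmaximal}'') is not enough as stated — with the natural $\pm\epsilon$ splittings the descending-chain contribution produces sums over arbitrarily small far-away cubes $Q$ that do not close when $sp\ge 2$ for a general uniform domain's cube count — and this is exactly why the paper argues by duality against an $\ell^{p'}$ sequence, converts $\ell(Q)^{2/p}g_Q=\int_QG$ into an integral, and invokes the maximal-function estimate of Lemma \ref{lemTwoFoldedFunctionGuay} to control $\sum_{Q,S:\,P\in[Q,S]}\frac{\ell(Q)^{1-s}\ell(S)^2}{\Dist(Q,S)^{3}}\int_QG$ by $\ell(P)^{-s}\int_P(MG+M^2G)$. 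A careful Schur-test with weights $\ell(Q)^t$ can also be made to work, but some such mechanism must be exhibited; as written, both the local estimates feeding into $\squared{N}$ and the final summation are asserted rather than proved.
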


Meyers' approximating polynomials are very useful to deal which such a situation: consider the set $\mathcal{P}^n$ of polynomials of degree at most $n$. Given a cube $Q$ and a function $f\in L^1_{\rm loc}(\frac52Q)$, the Meyers polynomial $P^{n}_Q f\in \mathcal{P}^n$ is the unique polynomial in $\mathcal{P}^n$ satisfying that $\int_{\frac52Q} \nabla^j f = \int_{\frac52Q} \nabla^j P^n_Q f$ for $j\leq n$.  It satisfies the Poincar\'e inequality 
\begin{equation}\label{eqPoincare}
\norm{\nabla^k (f-P^{n}_Qf)}_{L^1(\frac52 Q)}
	\lesssim \ell(Q)^{n-k} \norm{\nabla^{n}f- (\nabla^{n} f)_Q}_{L^1(\frac52 Q)},
\end{equation}
whenever $f\in W^{n,1}(3Q)$, $k\leq n$.

\begin{proof}[Proof of Lemma \ref{lemTaylorToBesov}]
We change the Taylor polynomial centered at $z\in Q$ by the corresponding Meyers' polynomial as follows:
\begin{align*}
\squared{N} 
\nonumber	& \lesssim   \sum_{Q\in\mathcal{W}} \ell(Q)^{-2\frac{p}{p'} + (1-s)p}  \left(\sum_{S\in\mathcal{W}}  \frac{\norm{\norm{P^{n}_Q F-P^{n}_z F}_{L^1(S)}}_{L^1_{{z}}(\frac52 Q)}}{\Dist(Q,S)^{n+3}}\right)^p \\
\nonumber	& +   \sum_{Q\in\mathcal{W}} \ell(Q)^{-2\frac{p}{p'} + (1-s)p}  \left(\sum_{S\in\mathcal{W}}  \frac{\ell(Q)^2 \norm{F-P^{n}_Q F}_{L^1(S)}}{\Dist(Q,S)^{n+3}}\right)^p = \squared{E}+\squared{M}.
\end{align*}

The error term $\squared{E}$ may be addressed using the following facts. First, given a polynomial $P$ of degree at most $n$ and disjoint cubes $Q$ and $S$, we have that
\begin{equation}\label{eqPolyLluny}
\norm{P}_{L^1(S)}\lesssim \frac{\ell(S)^2\Dist(Q,S)^{n}}{\ell(Q)^{2+n}}\norm{P}_{L^1(Q)}.
\end{equation}
(use the fact that all norms on $\mathcal{P}^n$ are equivalent and appropriate rescaling factors). Thus,
$$\squared{E}\lesssim \sum_{Q\in\mathcal{W}} \ell(Q)^{-2\frac{p}{p'}+ (1-s)p}  \left(\sum_{S\in\mathcal{W}}  \frac{\norm{\norm{P^{n}_Q F-P^{n}_z F}_{L^1(Q)}}_{L^1_{{z}}(\frac52 Q)}}{\Dist(Q,S)^{3}}\frac{\ell(S)^2}{\ell(Q)^{2+n}}\right)^p$$

Using Fubini, we can change the order of integration and since the Taylor polynomial of a polynomial of the same degree is itself, we get 
$$\norm{\norm{P^{n}_Q F-P^{n}_z F}_{L^1(Q)}}_{L^1_{{z}}(\frac52 Q)} \leq \norm{\norm{P^{n}_z(F-P^{n}_Q F)}_{L^1_{{z}}(\frac52 Q)}}_{L^\infty(Q)}\ell(Q)^2. $$
But using the expression  \rf{eqTaylor} of the Taylor Polynomial of degree $n$, for $\xi,z\in 3Q$ we have that
$$\norm{P_z^{n} f (\xi)}_{L^1_{{z}}(\frac52 Q)}\leq \sum_{0\leq |\vec{i}|\leq n} \frac{1}{\vec{i}!}\norm{D^{\vec{i}} f(z) (\xi-z)^{\vec{i}} }_{L^1_{{z}}(\frac52 Q)} \lesssim \sum_{k=0}^{n} \norm{\nabla^k f}_{L^1(\frac52 Q)} \ell(Q)^{k}.$$
Plugging the Poincar\'e inequality \rf{eqPoincare} in, we get
$$\norm{\norm{P^{n}_Q F-P^{n}_z F}_{L^1(Q)}}_{L^1_{{z}}(\frac52 Q)} \lesssim \ell(Q)^{2+n} \norm{\nabla^{n}F- (\nabla^{n} F)_Q}_{L^1(\frac52 Q)}. $$

Back to the error term, we get that
$$\squared{E}\lesssim     \sum_{Q\in\mathcal{W}} \ell(Q)^{-2\frac{p}{p'} + (1-s)p}  \left(\norm{\nabla^{n}F- (\nabla^{n} F)_Q}_{L^1(\frac52 Q)}\sum_{S\in\mathcal{W}}  \frac{\ell(S)^2}{\Dist(Q,S)^{3}}\right)^p.$$
Using \rf{eqMaximalFar} and the H\"older inequality, 
$$\squared{E}\lesssim     \sum_{Q\in\mathcal{W}} \ell(Q)^{-2\frac{p}{p'} + (1-s)p}  \norm{\nabla^{n}F- (\nabla^{n} F)_Q}_{L^p(\frac52 Q)}^p \ell(Q)^{2\frac{p}{p'}} \ell(Q)^{-p},$$
so 
$$\squared{E}\lesssim    \sum_{Q\in \mathcal{W}} \frac{\norm{\nabla^{n}F- (\nabla^{n} F)_Q}_{L^p(\frac52 Q)}^p}{\ell(Q)^{sp}}\lesssim \norm{\nabla^{n}F}_{B^s_{p,p}(\Omega)}.$$

To estimate the main term $\squared{M}$ we will argue by duality. Writing 
$$h_Q:= \ell(Q)^{1-s + \frac2{p}}  \sum_{S\in\mathcal{W}}  \frac{\norm{F-P^{n}_Q F}_{L^1(S)}}{\Dist(Q,S)^{n+3}},$$
it follows that 
$$\begin{aligned}
\squared{M}^\frac1p
	& = \norm{ \{h_Q\}_{Q\in\mathcal{W}}}_{\ell^p(\mathcal{W})} = \left(\sup_{\{g_Q\}} \sum_{Q\in\mathcal{W}} h_Q g_Q \right)
	\\&=  \sup_{\{g_Q\}} \sum_{Q\in\mathcal{W}}  \ell(Q)^{1-s + \frac2{p}}g_Q  \sum_{S\in\mathcal{W}}  \frac{\norm{F-P^{n}_Q F}_{L^1(S)}}{\Dist(Q,S)^{n+3}}  
\end{aligned}
$$
where the supremum is taken over the sequences $\{g_Q\}_{Q\in\mathcal{W}}$ satisfying that 
$$\norm{\{g_Q\}_{Q\in\mathcal{W}}}_{\ell^{p'}(\mathcal{W})}= 1.$$
Fix Whitney cubes $Q$ and $S$. Next we use a telescoping summation following the chain of cubes $[Q,S]$ introduced in Definition \ref{defChain}: 
\begin{align*}
\norm{F-P^{n}_Q F}_{L^1(S)}
	& \leq \norm{F-P^{n}_S F}_{L^1(S)} + \sum_{P\in [Q,S)}\norm{P^{n}_P F-P^{n}_{\mathcal{N}(P)} F}_{L^1(S)},
\end{align*}
where $\mathcal{N}(P)$ stands for the next cube in the chain $[Q,S]$.  By Definition \ref{defWhitney}, the side of a given Whitney cube is at most twice as long as the side of its neighbors. Thus, for $P\in [Q,S]$ we have $P\subset 5\mathcal{N}(P)$. Using \rf{eqPolyLluny},
\begin{align*}
\norm{F-P^{n}_Q F}_{L^1(S)}
	& \lesssim \sum_{P\in [Q,S]} \norm{F-P^{n}_P F}_{L^1(5P)} \frac{\ell(S)^2\Dist(P,S)^{n}}{\ell(P)^{2+n}}.
\end{align*}

Using the Poincar\'e inequality \rf{eqPoincare} and the H\"older inequality, for $P\in [Q,S]$ we get that
\begin{align*}
\norm{F-P^{n}_P F}_{L^1(5P)}
	&  \lesssim \norm{\nabla^{n} F-(\nabla^{n} F)_P}_{L^1(5P)}\ell(P)^{n} \\
	& \lesssim \norm{\nabla^{n} F-(\nabla^{n} F)_P}_{L^p(5P)}\ell(P)^{n+\frac2{p'}} \\		& \lesssim \norm{D^s_p \nabla^{n} F}_{L^p(5P)} \ell(P)^{s+n+\frac2{p'}}
\end{align*}
(see \rf{eqDerivativeLocal}).
Since $\Dist(P,S)\lesssim \Dist(Q,S)$, we obtain
$$\squared{M}^\frac1p\lesssim \sup_{\{g_Q\}} \sum_{P\in\mathcal{W}} \norm{D^s_p \nabla^{n} F}_{L^p(5P)}  \sum_{Q,S: P\in[Q,S]}  \frac{\ell(Q)^{1 + \frac2{p}-s}g_Q\ell(S)^2}{\ell(P)^{\frac2p-s}\Dist(Q,S)^{3}}.$$
To {complete  the proof we need to} use the maximal Hardy-Littlewood operator
{$$MG(x):=\sup_{Q: x\in Q} \frac{1}{|Q|}\int_Q |G(y)|\, dy, \quad\quad\text{for every }G\in L^1_{\rm loc}$$}
which is bounded on Lebesgue spaces, so we define  $G(x):=\sum_{Q\in\mathcal{W}}\frac{g_Q}{\ell(Q)^{2/p'}} \chi_Q(x)$. It is clear that $\norm{G}_{L^{p'}(\Omega)}=\norm{\{g_Q\}_{Q\in\mathcal{W}}}_{\ell^{p'}(\mathcal{W})}= 1$, and $\ell(Q)^\frac2p g_Q=\int_Q G$. Thus, 
$$\squared{M}^\frac1p\lesssim \sup_{G\in L^{p'}(\Omega)} \sum_{P\in\mathcal{W}} \frac{\norm{D^s_p \nabla^{n} F}_{L^p(5P)} }{{\ell(P)^{\frac2p-s}}} \sum_{Q,S: P\in[Q,S]}  \frac{\ell(Q)^{1 -s}  \ell(S)^2}{\Dist(Q,S)^{3}} \int_Q G(x) \, dx.$$

{Next we make the following claim, inspired in \cite[Lemma 2.5]{PratsQuasiconformal}.}
\begin{lemma}\label{lemTwoFoldedFunctionGuay}
Consider a uniform domain $\Omega\subset \R^d$ with Whitney covering $\mathcal{W}$, a cube $P\in\mathcal{W}$, a function $G\in L^{1}(\Omega)$ and two real numbers $0<s<\ell$. Then
$$ \sum_{Q,S: P\in[Q,S]}  \frac{\ell(Q)^{\ell-s}\ell(S)^2}{\Dist(Q,S)^{2+\ell}}\int_Q G(x) \, dx \lesssim \frac{1}{\ell(P)^s}\int_P (M G(x)+M^2 G(x))\, dx.$$
\end{lemma}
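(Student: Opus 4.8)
The plan is to split the sum over the chains $[Q,S]$ according to the position of the central cube $Q_S$ of the chain, which by Definition \ref{defChain} satisfies $\ell(Q_S)\approx \Dist(Q,S)$, and to use the structural estimates \rf{eqChainDistances}, \rf{eqChainLength} and above all the uniform-counting property \rf{eqChainNumberOfCubesUniformlyBounded}. First I would observe that if $P\in[Q,S]$ then either $P$ lies in the ``$Q$-half'' of the chain, where $\ell(P)\approx\Dist(Q,P)$, or in the ``$S$-half'', where $\ell(P)\approx\Dist(P,S)$; by symmetry of the roles of $Q$ and $S$ (up to the mismatched exponents $\ell-s$ versus $2$, which I will keep track of) it suffices to treat one case and then the other. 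In the first case $\Dist(Q,S)\gtrsim\Dist(Q,P)\approx\ell(P)$ and $\Dist(Q,S)\approx\ell(Q_S)\gtrsim\ell(S)$, so one has room to spend decay on both $Q$ and $S$.

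The key step is to carry out the two inner summations — over $S$ with $Q,P$ fixed, and over $Q$ with $P$ fixed — using Lemma \ref{lemmaximal}. For fixed $Q$ and $P$, summing $\ell(S)^2/\Dist(Q,S)^{2+\ell}$ over all $S$ with $P\in[Q,S]$: since for each dyadic scale $\ell_0$ there are at most $C$ cubes of that scale in any chain through $P$ (by \rf{eqChainNumberOfCubesUniformlyBounded}), and since $\Dist(Q,S)\gtrsim\Dist(Q,P)$, this sum is bounded by $\sum_{S}\ell(S)^2/\Dist(Q,S)^{2+\ell}\lesssim \Dist(Q,P)^{-\ell}$ by \rf{eqMaximalFar} with $\eta=\ell$ (one may have to be slightly careful and instead sum $\ell(S)^2/(\Dist(Q,P)^{\ell-\varepsilon}\Dist(Q,S)^{2+\varepsilon})$ for a small $\varepsilon>0$ to keep genuine convergence, then absorb). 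This reduces the whole expression to
$$\sum_{Q:\,P\in[Q,S]\text{ some }S}\frac{\ell(Q)^{\ell-s}}{\Dist(Q,P)^{\ell}}\int_Q G(x)\,dx,$$
and now I split this over dyadic scales $2^{-k}\ell(P)=\ell(Q)$. For $\ell(Q)\le\ell(P)$ one has $\Dist(Q,P)\gtrsim\ell(P)$ after noting $P$ lies on the chain; for each such scale the cubes $Q$ that can appear are those whose $5Q$-enlargements, chained to $P$, force $\Dist(Q,P)$ comparable to a controlled range, and $\sum_{Q\subset cP}\int_Q G\le\int_{cP}G\lesssim \ell(P)^d MG(x_P)$, giving a geometric series in $k$ with ratio controlled by $\ell-s>0$. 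The cubes $Q$ larger than $P$ are handled dually: there $\ell(Q)\approx\Dist(Q,P)$, so $\ell(Q)^{\ell-s}/\Dist(Q,P)^\ell\approx\ell(Q)^{-s}$, and summing the averages of $G$ over an increasing chain of cubes containing $P$ produces exactly $M^2G$ evaluated near $P$ (one maximal operator to pass from $\int_Q G$ to the average over $Q$, a second to sum the resulting increasing sequence of averages). Integrating the pointwise bound over $P$ and using $\ell(P)^{-s}$ as the prefactor yields the claimed $\ell(P)^{-s}\int_P(MG+M^2G)$.

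The main obstacle I anticipate is bookkeeping the two different exponents ($\ell(Q)^{\ell-s}$ with $\Dist(Q,S)^{-(2+\ell)}$ and the bare $\ell(S)^2$) through the chain splitting: one must make sure that after peeling off $\Dist(Q,S)^{-2}$ to control the $S$-sum via \rf{eqMaximalFar} there is still $\Dist(Q,S)^{-\ell}\approx\ell(Q_S)^{-\ell}$ left to absorb $\ell(Q)^{\ell-s}$ and run the geometric summation, and that the asymmetry does not break when $P$ sits in the $S$-half rather than the $Q$-half of the chain — there $\ell(S)^2/\Dist(P,S)^{?}$ must be summed and one uses instead \rf{eqMaximalClose}. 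This is precisely the type of chain estimate carried out in \cite[Lemma 2.5]{PratsQuasiconformal} and \cite[Section 3]{PratsTolsa}, so the argument is by now fairly standard; the only genuinely new point is tracking the parameter $\ell$ in place of the specific value used there, which affects nothing as long as $\ell>s$ keeps the relevant geometric series summable.
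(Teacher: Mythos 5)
Your treatment of the ascending half (the cubes $Q$ with $\Dist(Q,P)\lesssim\ell(P)$) is sound and is essentially the paper's estimate of that piece. The gap is in the other half. When $P$ lies in the descending part $[Q_S,S]$ of a chain, the initial cube $Q$ need not fall into either of your two alternatives: it can satisfy $\ell(Q)\ll\ell(P)$ while $\Dist(Q,P)\approx\ell(Q_S)\gg\ell(P)$ (a tiny cube near the boundary, far from $P$, chained to a tiny $S$ sitting next to $P$). For such pairs your decoupled bound $\sum_{S}\ell(S)^2\,\Dist(Q,S)^{-2-\ell}\lesssim\Dist(Q,P)^{-\ell}$ is too lossy: by \rf{eqChainDistances} the constraint $P\in[Q_S,S]$ forces $\Dist(S,P)\lesssim\ell(P)$, so the admissible $S$ contribute only $\ell(P)^{2}\,\Dist(Q,P)^{-2-\ell}$, and the factor $(\ell(P)/\Dist(Q,P))^{2}$ you discard is exactly what makes the sum over far cubes $Q$ converge. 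Indeed, after your reduction essentially every $Q\in\W$ appears (pairs with $Q$ small and far and $S$ adjacent to, or equal to, $P$ do occur), so you would need $\sum_{Q\in\W}\ell(Q)^{\ell-s}\Dist(Q,P)^{-\ell}\int_Q G\lesssim\ell(P)^{-s}\int_P(MG+M^2G)$; testing with $G\equiv1$ in the planar case, the dyadic distance ring at scale $2^k\ell(P)$ contributes about $(2^k\ell(P))^{2-s}$, so the left-hand side is of order $\diam(\Omega)^{2-s}$ while the right-hand side is $\approx\ell(P)^{2-s}$. The reduced inequality is false, so the loss cannot be recovered afterwards, and your second subcase (``cubes $Q$ larger than $P$ with $\ell(Q)\approx\Dist(Q,P)$'') does not cover the problematic small-and-far $Q$.

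The paper avoids this by splitting the chain at the central cube and keeping $\ell(S)^2$ coupled to the position of $P$. In the descending case it sets $R=Q_S$, uses $\sum_{S:\Dist(S,P)\lesssim\ell(P)}\ell(S)^2\lesssim\ell(P)^2$, and then bounds $\sum_{Q:\Dist(Q,R)\lesssim\ell(R)}\ell(Q)^{\ell-s}\int_Q G\lesssim\ell(R)^{\ell-s}\int_R MG\lesssim\ell(R)^{\ell-s}\,\ell(R)^{2}\,\ell(P)^{-2}\int_P M^2G$; the surviving ratio $\ell(P)^2/\ell(R)^2$ is traded for the second maximal function, and the remaining series $\sum_{R:\Dist(R,P)\lesssim\ell(R)}\ell(R)^{-s}$ is geometric because $\ell(R)\gtrsim\ell(P)$ and $s>0$. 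Your argument needs this coupling (or an equivalent device, e.g.\ summing over the central cubes $R$ rather than decoupling $S$ from $P$) reinstated; Lemma \ref{lemmaximal} and the $\varepsilon$-trick alone do not supply it.
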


{Before proving  Lemma \ref{lemTwoFoldedFunctionGuay}, let us see how it can be used to complete the proof of Lemma \ref{lemTaylorToBesov}: Take $\ell=1$ in the lemma.} By H\"older's inequality and the finite overlapping of Whitney cubes, we get that 
$$\squared{M}^\frac1p \lesssim  \sup_{G\in L^{p'}(\Omega)} \sum_{P\in\mathcal{W}} \norm{D^s_p \nabla^{n} F}_{L^p(5P)}   \norm{MG+ M^2G}_{L^{p'}(P)} \lesssim \norm{D^s_p \nabla^{n} F}_{L^p(\Omega)}$$
{by the boundedness of the maximal operator in $L^{p'}$, which verifies Lemma \ref{lemTaylorToBesov}.}

{Let us then turn to proving Lemma \ref{lemTwoFoldedFunctionGuay}.}We divide the chain $[Q,S] =[Q,Q_S] \cup [Q_S,S]$, in such a way that if $P$ is in the ascending path $[Q,Q_S]$ then $\ell(P)\approx \Dist(Q,P)$ and $\Dist(P,S)\approx \Dist(Q,S)\approx \ell(Q_S)$, and if $P$ is in the descending path we get analogous conditions, see \rf{eqChainDistances}. Thus, we write
$$\left(\sum_{Q,S: P\in[Q,Q_S]}+\sum_{Q,S: P\in[Q_S,S]}\right)  \frac{\ell(Q)^{\ell-s}\ell(S)^2}{\Dist(Q,S)^{2+\ell}}\int_Q G(x) \, dx  =:\squared{A}+\squared{D}.$$

For every cube $P$ we get
\begin{align}\label{eqBoundCubes}
\sum_{Q:\Dist(Q,P)\lesssim \ell(P)}  \ell(Q)^{\ell-s} \int_Q G(x) \, dx  
	& \leq \ell(P)^{\ell-s} \int_P MG(x)\, dx .
\end{align}
In the ascending path, thus, using \rf{eqMaximalFar} and \rf{eqBoundCubes} we obtain
\begin{align*}
\squared{A}	
	& \leq \sum_{S\in\mathcal{W}} \frac{\ell(S)^2}{\Dist (S,P)^{2+\ell}}\sum_{Q:\Dist(Q,P)\lesssim \ell(P)}  \ell(Q)^{\ell-s} \int_Q G(x) \, dx  
	\approx \ell(P)^{-\ell} \ell(P)^{\ell-s} \int_P MG(x)\, dx
\end{align*}

In the descending path, we divide the sum in $Q$ in ``dyadic annuli'' just by setting $R=Q_S$:
$$\squared{D}=\sum_{S:\Dist(S,P)\lesssim \ell(P)} \ell(S)^2 \sum_{R: \Dist(R,P)\lesssim \ell(R)}\frac{1}{\ell(R)^{2+\ell}} \sum_{Q: \Dist(Q,R)\lesssim \ell(R)} \ell(Q)^{\ell-s} \int_Q G(x)\, dx.$$
Again, first we will use \rf{eqBoundCubes} to get
\begin{align*}
 \squared{D}
 	& \lesssim \ell(P)^2 \sum_{R: \Dist(R,P)\lesssim \ell(R)} \frac{1}{\ell(R)^{2+s}} \int_R MG(x)\, dx \lesssim   \int_PM^2 G(y) dy\sum_{R: \Dist(R,P)\lesssim \ell(R)}\frac{1}{\ell(R)^{s}}.
	\end{align*}
Since the last sum above is a geometric series, we obtain that
\begin{align*}
 \squared{D}
 	& \lesssim  \frac{1}{\ell(P)^s}\int_P M^2 G(y)\, dy .
 \end{align*}

\end{proof}

\begin{appendices}
\section{Appendix: A universal extension operator}
In \cite{Rychkov}, for any given index {$\raja>0$} the author defined  an operator $\mathcal{E}=\mathcal{E}_\raja$ on the space of distributions on a $(\delta,\infty)$-Lipschitz domain $\Omega\subset \R^d$ (aka special Lipschitz domain) such that it is an extension operator for $F^s_{p,q}(\Omega)$ for $s<\raja$ and for all the admissible values of $p$ and $q$. In this section we check that the same operator maps also $L^\infty(\Omega)$ to $L^\infty$:
\begin{theorem}\label{theoRychkov}
Given a special Lipschitz domain $\Omega$ and $\raja\in \N$, any operator $\mathcal{E}:=\mathcal{E}_\raja$ as defined in  {\rm \cite[Theorem 2.2]{Rychkov}} maps $L^\infty(\Omega)$ to $L^\infty$.
\end{theorem}

\noindent Before proving the theorem we  recall the definition of  Rychkov's extension $\mathcal{E}$. Due to the fact that we are dealing with a $(\delta,\infty)$-Lipschitz domain, there exists an open cone $K' $ such that its translates satisfy $x+K'\subset \Omega$ for every $x\in\Omega$. We may assume that $K'=\bigcup_{t>0}B(tx_0,tr_0)$, where $|x_0|>r_0>0$. Denote by $K:=\bigcup_{t>0}B(tx_0,tr_0/2)$ the smaller cone 'compactly' contained in $K'$.
Take $\varphi_0\in C^\infty_c(-K)$ with integral one and write
$$\varphi(x)=\varphi_0(x)-2^{-d}\varphi_0(x/2), \quad\quad \varphi_j(x):=2^{jd}\varphi(2^j x) \, \mbox{ for } j\in\N.$$
Let $\raja\in \N$ be given (actually, when considering Besov or Tribel spaces one demands that $L>\max\{s-1,d/p, d/q\}$), and assume that $\varphi$ has vanishing moments up to order $\raja$, i.e.
$$\int_{\R^n}x^\alpha \varphi(x)\, dx =0\qquad\textrm{for all}\quad \alpha\in \N^d \;\;\textrm{with}\;\; |\alpha|\leq \raja.$$	
According to \cite[Proposition 2.1]{Rychkov}, there exist functions $\psi_0,\psi\in C^\infty_0(-K)$ depending only on $K$ and $\raja$ such that $\psi$ has vanishing moments up to order $\raja$ and denoting $\psi_j:=2^{jd}\psi(2^j \cdot)$ for $j\in\N$ we have
\begin{equation}\label{eqIdentity}
f\mapsto \sum_{j=0}^\infty \psi_j*\varphi_j* f \mbox{ is the identity in $\mathcal{D}'(\Omega)$}.
\end{equation}
Finally, one simply sets
$$\mathcal{E} f:= \sum_{j=0}^\infty \psi_j* \big((\varphi_j* f)|_0\big),$$
where $(\varphi_j* f)|_0$ stands for the zero extension of the locally integrable function $(\varphi_j* f)$ (the latter one is defined on the domain $\Omega$).
\begin{proof}[Proof of Theorem \ref{theoRychkov}]
The extension acts as the identity inside the domain and the boundary of $\Omega$ has zero Lebesgue measure, so it is enough to  fix $x\in {\overline{\Omega}}^c$ and check that  $\mathcal{E}f(x)\leq C\norm{f}_{L^\infty(\Omega)}$, with $C$ independent of $x$ or $f$. For that end first first note that
\begin{claim}\label{claimFiniteSum}
Let $f\in L^\infty(\Omega)$. There exists $M\in\N$ such that for every $x\in \overline{\Omega}^c$ we may write 
$$\mathcal{E} f(x)= \sum_{j=0}^{N_x-1} \psi_j*\varphi_j*f_0 (x)+  \sum_{j=N_x}^{N_x+M} \psi_j*((\varphi_j*f)_0)(x)$$
for a suitable index $N_x\in\{0,1,2,\ldots\}$. Above  $f_0$ is the extension of $f$ by zero to the whole of $\R^d$ .
\end{claim}
Indeed, by simple geometry, if $d(x,\Omega)\sim 2^{-j_0},$ then $\psi_j*((\varphi_j*f)_0)(x)=0$ for $j\geq j_0+M_0$ with a fixed $M_0$ just by the definition of a convolution.  On the other hand, by increasing $M_0$ if needed, we see that  $\psi_j(x-\cdot)$ is fully supported in $\Omega$
for $j\leq j_0-M_0$, and hence for these indices $j$ it holds that  
$$
\psi_j*((\varphi_j*f)_0)(x)= \psi_j*\varphi_j*f(x)=\psi_j*\varphi_j*f_0(x).
$$
Here $M_0$ does not depend on $x$. In view of the definition the extension $\mathcal{E}$ we may thus take $N_x=(j_0-M_0)\vee 0$ and $M:=2M_0$.

By e.g. noting that \eqref{eqIdentity} remains valid if $\Omega$ is replaced by its translates, any $g\in L^\infty(\R^d)$ satisfies 
$$
g = \sum_{j=0}^\infty \psi_j*\varphi_j* g \qquad\textrm{on}\quad \R^d.
$$
Especially, by choosing  $g=f_0$ and looking again at the supports we see that for our fixed $x$ it holds that
$$
0= f_0(x)=  \sum_{j=0}^{N_x+M} \psi_j*\varphi_j*f_0(x).
$$
As we subtract this from the claim \ref{claimFiniteSum} it finally follows that
$$\mathcal{E} f(x)=   \sum_{j=N_x}^{N_x+M}\Big( \psi_j*((\varphi_j*f)_0)(x)-\psi_j*\varphi_j*f_0(x)\Big)$$
The desired boundedness clearly follows since $\psi_j$:s and $\varphi_j$:t have uniformly bounded $L^1$-norms and the number of summands does not depend on $f$ or $x$.
\end{proof}
\begin{corollary}\label{coroRychkov}
Given a Lipschitz domain $\Omega$ and $s\in \N$, there exists an operator $\mathcal{E}:=\mathcal{E}_s$ defined in $\mathcal{D}'(\Omega)$ that is an extension operator from $L^\infty(\Omega)$ to $L^\infty$ and from $F^\sigma_{p,q}(\Omega)$ to $F^\sigma_{p,q}$ for every $\sigma\leq s$, every $1/s<p<\infty$ and every $1/s\leq q\leq \infty$.
\end{corollary}
\end{appendices}

\renewcommand{\abstractname}{Acknowledgements}
\begin{abstract}  We are grateful for H. Triebel for helpful comments regarding Proposition \ref{propoRiemannTriebel}.
The first author was supported by Academy of Finland project SA13316965. The second author was supported by the Spanish State Research Agency, through the Severo Ochoa and Mar\'ia de Maeztu Program for Centers and Units of Excellence in R\&D (CEX2020-001084-M), by the Spanish government under the grant IJC2018-035373-I and by ERC grants 320501 (FP7/2007-2013), 307179-GFTIPFD and partially supported by MTM-2016-77635-P, PID2020-114167GB-I00, PID2021-125021NA-I00, PID2021-123405NB-I00 (Spain) and projects 2017-SGR-395, 2021 SGR 00087 and 2021-SGR-00071 (Catalonia). The two first authors were supported also by the ERC grant 834728-QUAMAP. The third author was supported by the Academy of Finland CoE ``Analysis and Dynamics'', as well as the Academy of Finland Project ``Conformal methods in analysis and random geometry''.
\end{abstract}

\bibliography{LlibresAPS}

\end{document}